\def\Xint#1{\mathchoice
  {\XXint\displaystyle\textstyle{#1}}%
  {\XXint\textstyle\scriptstyle{#1}}%
  {\XXint\scriptstyle\scriptscriptstyle{#1}}%
  {\XXint\scriptscriptstyle\scriptscriptstyle{#1}}%
  \!\int}
\def\XXint#1#2#3{{\setbox0=\hbox{$#1{#2#3}{\int}$}
    \vcenter{\hbox{$#2#3$}}\kern-.5\wd0}}
\def\fint{\Xint-}
\def\NN{\mathbb N}
\def\RR{\mathbb R}
\def\R{\mathbb R}
\def\I{\text R}
\def\ZZ{\mathbb Z}
\newcommand{\di}{\mathrm{div}}
\newcommand{\holder}{\mathcal{C}^{0,\alpha}}
\def\dint{\text{d}}
\def\I{\text{I}}
\newcommand{\per}{\mathrm{per}}
\def\div{\operatorname{div}}
\def\11{\mathbf{1}}
\begin{document}
\numberwithin{equation}{section}
\newtheorem{theoreme}{Theorem}[section]
\newtheorem{proposition}[theoreme]{Proposition}
\newtheorem{remarque}[theoreme]{Remark}
\newtheorem{lemme}[theoreme]{Lemma}
\newtheorem{corollaire}[theoreme]{Corollary}
\newtheorem{definition}[theoreme]{Definition}
\newtheorem{exemple}[theoreme]{Example}

\title{Homogenization of the $p-$Laplace equation in a periodic setting with a local defect}
\author[1]{S. Wolf}
\affil[1]{{\footnotesize Universit\'e de Paris-Cit\'e, Laboratoire Jacques-Louis Lions, F-75013 Paris}}

\maketitle

\begin{abstract}
In this paper, we consider the homogenization of the $p-$Laplace equation with a periodic coefficient that is perturbed by a local defect. This setting has been introduced in~\cite{BLLMilan,BLLcpde} in the linear setting $p=2$. We construct the correctors and we derive convergence results to the homogenized solution in the case $p > 2$ under the assumption that the periodic correctors are non degenerate. 
\end{abstract}

\tableofcontents

\section{Introduction}

This paper is concerned with the homogenization of non-linear degenerate elliptic equations in a periodic setting with defects. More precisely, we are interested in $p-$Laplacian type equations that are defined, for some $p \geq 2$, as
\begin{equation}
\begin{cases}
\begin{aligned}
- \div a\left(\frac{\cdot}{\varepsilon} \right)\nabla u_{\varepsilon} \left| \nabla u_{\varepsilon} \right|^{p-2} & = f \quad \text{in} \quad \Omega \\
u_{\varepsilon} & \in W^{1,p}_0(\Omega) 
\end{aligned}
\end{cases}
\label{eq:nonlin}
\end{equation} 
for a fixed bounded domain $\Omega \subset \RR^d$, $d \geq 1$ and $f \in L^{p'}(\Omega)$. For $p=2$, we recover the standard linear conductivity equation.
 In~\eqref{eq:nonlin}, the scalar-valued coefficient $a$ is assumed to be of the form
\begin{equation}
a = a^{\per} + \widetilde{a},
\label{eq:intro_formecoeff}
\end{equation} where $a^{\per}$ is a periodic coefficient with standard coercivity and boundedness condition and $\widetilde{a}$ is a perturbation of $a^{\per}$ such that $\widetilde{a} \in L^q(\RR^d)$ for some $1 \leq q \leq \frac{p}{p-1}$. We assume that the coefficient $a$ itself is coercive and bounded and we choose $\lambda > 0$ such that
\begin{equation}  
  \forall y \in \RR^d, \quad \lambda^{-1} < a^{\per}(y) < \lambda \quad \text{and} \quad  \lambda^{-1} < a(y) < \lambda.
  \label{eq:coercif}
  \end{equation}
For fixed $\varepsilon > 0$, Problem~\eqref{eq:nonlin} is well-posed and corresponds to the Euler-Lagrange equation of the minimization Problem
\begin{equation}
\min_{v \in W^{1,p}_0(\Omega)} \left\{ \frac{1}{p} \int_{\Omega} a \big( \frac{\cdot}{\varepsilon} \big) \left| \nabla v \right|^p - \int_{\Omega} f v \right\}.
\label{eq:min}
\end{equation}
The behaviour of~\eqref{eq:nonlin} when $\varepsilon \rightarrow 0$ has been studied in the absence of perturbation, \textit{i.e.} when $a = a^{\per}$. It corresponds to a particular case of the homogenization of the equation
\begin{equation}
- \mathrm{div} A\big( \frac{\cdot}{\varepsilon}, \nabla u_{\varepsilon} \big) = f
\label{eq:introgen}
\end{equation}
 under general growth and continuity conditions for the operator $A(y,\xi)$ (in our case, we have that $A(y) = a^{\per}(y)\xi |\xi|^{p-2}$). The homogenized limit of~\eqref{eq:introgen} is derived in \cite{fusco1985further,fusco1986homogenization}.
It is proved that $u_{\varepsilon}$ converges in the $W^{1,p}-$weak topology, when $\varepsilon \rightarrow 0$, to $u^*$ which is defined by the homogenized equation
\begin{equation}
\begin{cases}
\begin{aligned}
- \text{div} A^*(\nabla u^*) & = f \\
u^* & \in W^{1,p}_0(\Omega),
\end{aligned}
\end{cases}
\label{eq:homogeneisee}
\end{equation} 
where, for $\xi \in \RR^d$, the homogenized operator is 
$$A^*(\xi) := \int_Q A\big(y,\xi + \nabla w_{\xi}(y) \big)\text{d}y,$$
and the function $w_{\xi} \in W^{1,p}_{\per}(Q)$ is the corrector in the direction $\xi$ given as the periodic solution (up to an additive constant) to the equation
\begin{equation}
-\text{div} A(\cdot,\xi + \nabla w_{\xi}) = 0.
\label{eq:intro_nonlin_cor}
\end{equation}
The strong convergence of the gradient
\begin{equation}
\nabla u_{\varepsilon} - \nabla u^* - \nabla w_{\nabla u^*}(./\varepsilon) \underset{\varepsilon \rightarrow 0}{\longrightarrow} 0 \quad \text{in} \quad L^p(\Omega)
\label{eq:intro_cvforte}
\end{equation}
has been obtained in \cite{dal1990correctors} with $\nabla u^*$ replaced by its discretization at small scale $\varepsilon$, for measurability reasons, see Section~\ref{sect:results} below for the details. The periodic homogenization of the integral functionals corresponding to \eqref{eq:nonlin} is exposed in  e.g. \cite{braides2002gamma}. The stochastic case has been studied qualitatively in~\cite{dal1986nonlinear}. Recently, quantitative results for non-linear stochastic problems have been obtained in \cite{fischer2019optimal} with optimal convergence rates for non-degenerate non-linear operators with quadratic growth, see also~\cite{wang2018quantitative} for the deterministic case. The case of stochastic non-degenerate operators with $p-$growth, $p > 2$, is addressed in~\cite{clozeau2021quantitative}. 

\medskip

In this paper, we study Equation \eqref{eq:intro_nonlin_cor} when the perturbation $\widetilde{a}$ belongs to the space $L^q(\RR^d)$ for $1 \leq q \leq \frac{p}{p-1}$ and to some H\"{o}lder space (see Theorem~\ref{th:existencecor} below). We then derive the homogenized limit of the sequence $(u_{\varepsilon})_{\varepsilon > 0}$ and we study the convergence of the two-scale expansion~\eqref{eq:intro_cvforte} when we use, on the one hand, the periodic corrector and, on the other hand, the non-periodic corrector (corresponding respectively to the solutions of~\eqref{eq:intro_nonlin_cor} when $A(y,\xi) = a^{\per}(y)\xi|\xi|^{p-2}$ and $A(y,\xi)= a(y)\xi|\xi|^{p-2}$). We also illustrate the quantitative convergence of the two-scale expansion~\eqref{eq:intro_cvforte} in the one-dimensional setting and prove that, in this case, using the non-periodic corrector instead of the periodic corrector in fact improves the quality of convergence of~\eqref{eq:intro_cvforte}. The main difficulty of this work is that Equation~\eqref{eq:intro_nonlin_cor} is posed on the whole space $\RR^d$. One major tool to obtain the strong convergence \eqref{eq:intro_cvforte} in the non-periodic case is the continuity of the application $\xi \longmapsto \nabla w_{\xi}$ (see Theorem~\ref{th:th_nonlin} below). This will be proved under one of the two Assumptions \textbf{(A4)} or \textbf{(A4)'} below.

\medskip

Before stating our main results, we would like to comment on the special case $p=2$ for the homogenization of Problem~\eqref{eq:nonlin}. This problem is very standard since the 70's for a periodic coefficient $a$, see e.g. \cite{BLP} for qualitative results and~\cite{AL} for quantitative results. It is worth mentioning that, in this case, the homogenization objects such as correctors and homogenized limits are explicit and very easy to compute.  
The setting \eqref{eq:nonlin}-\eqref{eq:intro_formecoeff} has first been introduced in \cite{BLLMilan} for $q=2$. It models local defects that could appear, at the microscale, in a periodic background. The results obtained have been generalized to the case $1 \leq q < +\infty$ in \cite{BLLcpde,BLLfutur1} and convergence rates have been proved in \cite{BJL}. In \cite{goudey}, a new non-periodic setting has been introduced to model defects that are not local but rare at infinity. We stress that, in \cite{BLLMilan,BLLcpde,BLLfutur1,BJL,goudey}, as in the present work, the macroscopic behaviour of the oscillating solution remains the same as in the case of a periodic coefficient. This will be expressed, for the non-linear case, in Theorem~\ref{th:homog_qualititative} below.  

\medskip

The paper is organized as follows. The main results of the paper are presented in Section~\ref{sect:results}. We develop in Section~\ref{sect:1D} explicit calculations in the one dimensional setting and obtain convergence results. We then turn in Section~\ref{sect:cor} to the existence of the non-periodic correctors in any dimension. The properties of the non-periodic corrector are proved in Section~\ref{sect:propcor}. We then derive qualitiative homogenization results in Section~\ref{sect:qual}. We finally prove in Section~\ref{sect:weakcont} a weaker continuity result for the mapping $\xi \longmapsto \nabla w_{\xi}$ that is enough to derive qualitative homogenization. We recall in Appendix~\ref{annexe:proof_prop2.1} the proof of classical results in the periodic case. Technical inequalities are gathered in Appendix~\ref{sect:ineq}.

\section{Main results}
\label{sect:results}

\subsection{Notations} In the whole paper, $d \geq 1$ will be the dimension of the ambient space. The standard unit cube $\left(-\frac{1}{2},\frac{1}{2}\right)^d$ will be denoted by $Q$. The euclidian norm will be written $|\cdot|$ as well as the Lebesgue measure of a measurable subset of $\RR^d$. Let $\Omega$ be a bounded domain of $\RR^d$. If $1 < q < +\infty$ is an exponent, we define its conjugate by $q' := q/(q-1)$.  The euclidian open ball of $\RR^d$ centered in $x$ and of radius $r> 0$ will be written $B(x,r)$. If $x=0$, we write $B_r := B(0,r)$. We use similar notations for cubes, namely $Q(x,r)$ and $Q_r$. We define the mean-value operation for a measurable and integrable function $u$ by
$$\fint_{B(x,r)} u := \frac{1}{|B(x,r)|} \int_{B(x,r)} u.$$
The indicator function of a measurable set $A$ is denoted $1_A$.

\medskip The standard Lebesgue and Sobolev spaces are denoted by $L^q(\Omega)$ and $W^{1,q}(\Omega)$. The associated norms are
$$\|u\|_{L^q(\Omega)} := \left(\int_{\Omega} |u|^q \right)^{1/q} \quad \text{and} \quad \|u\|_{W^{1,q}(\Omega)} := \left(\int_{\Omega} |u|^q \right)^{1/q} + \left(\int_{\Omega} |\nabla u|^q \right)^{1/q}.$$ 
The space $L^{q}_{\per}$ (resp. $W^{1,q}_{\per}$) denotes the set of functions that are periodic and locally belong to $L^q$ (resp.~$W^{1,q}$). Theses two spaces are endowed with the norms
$$\|u\|_{L^q_{\per}} := \left(\int_{Q} |u|^q \right)^{1/q} \quad \text{and} \quad \|u\|_{W^{1,q}_{\per}} := \left(\int_Q |u|^q \right)^{1/q} + \left(\int_Q |\nabla u|^q \right)^{1/q}.$$ 
The space of uniformly $L^q$ (resp. $W^{1,q}$) functions is denoted by $L^q_{unif}$ (resp. $W^{1,q}_{unif}$). These spaces are endowed with the norms
$$\|u\|_{L^q_{unif}(\RR^d)} := \sup_{x \in \RR^d} \|u\|_{L^q(x+Q)} \quad \text{and} \quad \|u\|_{W^{1,q}_{unif}(\RR^d)} := \sup_{x \in \RR^d} \|u\|_{W^{1,q}(x+Q)}.$$
For $0 <\alpha < 1$, the space $\mathcal{C}^{0,\alpha}$ refers to the standard H\"{o}lder space endowed with the norm
$$\|u\|_{\mathcal{C}^{0,\alpha}} := \|u\|_{L^{\infty}} + \sup_{x \neq y} \frac{|u(x) - u(y)|}{|x-y|^{\alpha}}.$$
We define, for $\delta >0$, the discretization operator $M_{\delta} : L^q(\Omega) \longrightarrow L^q(\Omega)$ introduced in \cite{dal1990correctors,fusco1986homogenization}. If $\phi \in L^q(\Omega)$, we set
\begin{equation}
M_{\delta} \phi := \sum_{k \in \ZZ^d \ \text{s.t.} \ \delta(Q+k) \subset \Omega} \left( \fint_{\delta(Q + k)} \phi \right)1_{\delta(k+Q)}.
\label{eq:M_eps}
\end{equation}
It is clear that $M_{\delta}$ is linear and bounded over $L^q(\Omega)$ and that $M_{\delta} \phi \underset{\delta\rightarrow 0}{\longrightarrow} \phi$ in $L^q(\Omega)$.

\medskip

\subsection{The periodic case}

We assume in this paragraph that $\widetilde{a} = 0$ in~\eqref{eq:intro_formecoeff}.
In this case, the corrector equation is, according to~\eqref{eq:intro_nonlin_cor}:
\begin{equation}
- \text{div}\ a^{\per}(y)(\xi + \nabla w_{\xi}^{\per})|\xi + \nabla w_{\xi}^{\per}|^{p-2} =0.
\label{eq:cor_per_2}
\end{equation} The equation~\eqref{eq:cor_per_2} admits a unique solution $w_{\xi}^{\per}$ in the space $W^{1,p}_{\per}(Q)/\RR$. Indeed, the weak formulation of \eqref{eq:cor_per_2} is
\begin{equation}
\forall \phi \in W^{1,p}_{\per}(Q)/\RR, \quad \int_Q a^{\per}(y)(\xi + \nabla w_{\xi}^{\per})|\xi + \nabla w_{\xi}^{\per}|^{p-2} \cdot \nabla \phi = 0,
\label{eq:cor_per}
\end{equation}
which is exactly the Euler-Lagrange equation of the minimization Problem
\begin{equation}
\min_{v \in W^{1,p}_{\per}(Q)/\RR} \left\{ \frac{1}{p} \int_Q a^{\per}(y)\big|\xi + \nabla v \big|^p \text{d}y \right\}.
\label{eq:min_per}
\end{equation}
It is easy to see that the functional appearing in Problem~\eqref{eq:min_per} is strictly convex, coercive and continuous with respect to $\nabla v$. Thus,~\eqref{eq:min_per} admits a minimizer $w_{\xi}^{\per}$, the gradient of which is unique. We impose that $\fint_{Q} w_{\xi}^{\per}=0$ so that $w_{\xi}^{\per}$ is itself unique. Besides, we have the following Proposition (see \cite{fusco1986homogenization,fusco1985further,dal1990correctors} or Appendix~\ref{annexe:proof_prop2.1} below for a proof) gathering the main properties of the application $\xi \longmapsto \nabla w_{\xi}^{\per}$:

\begin{proposition} Let $a^{\per} : \RR^d \longrightarrow \RR$ be a periodic and Lipschitz continuous coefficient satisfying~\eqref{eq:coercif}.
\begin{enumerate}[label=(\roman*)]
\item The map $\xi \longmapsto \nabla w_{\xi}^{\per}$ is homogeneous in the sense that for all $\xi \in \RR^d$ and $t \in \RR$, 
\begin{equation}
 \nabla w_{t\xi}^{\per} = t \nabla w_{\xi}^{\per}.
\label{eq:homog_per}
\end{equation}
\item There exists an exponent $\alpha = \alpha(d,p,a^{\per}) > 0$ such that for all $\xi \in \RR^d$, $\nabla w_{\xi}^{\per} \in \mathcal{C}^{0,\alpha}(\RR^d)$. Moreover, there exists a constant $C = C(d,p,a^{\per}) > 0$ such that
\begin{equation}
\label{eq:estim_holder_per}
\| \nabla w_{\xi}^{\per} \|_{L^p_{unif}(\RR^d)} \leq C|\xi| \quad \text{and} \quad \| \nabla w_{\xi}^{\per} \|_{\mathcal{C}^{0,\alpha}(\RR^d)} \leq C|\xi|.  
\end{equation}
\item There exists a constant $C = C(d,p,a^{\per}) > 0$ such that for all $\xi, \eta \in \RR^d$,
\begin{equation}
\| \nabla w_{\xi}^{\per} - \nabla w_{\eta}^{\per} \|_{L^p_{unif}(\RR^d)} \leq C \left[ |\xi|^{1-\beta} + |\eta|^{1 - \beta} \right]|\xi - \eta|^{\beta}, \quad \beta := \frac{1}{p-1}.
\label{eq:continuité_holder}
\end{equation}
\item There exists a constant $C = C(d,p,a^{\per}) > 0$ such that for all $\xi, \eta \in \RR^d$, \begin{equation}
\big\| \nabla w_{\xi}^{\per} - \nabla w_{\eta}^{\per} \big\|_{L^{\infty}(\RR^d)} \leq C \big[|\xi|^{1-\gamma} + |\eta|^{1-\gamma} \big]| \xi - \eta|^{\gamma}, \quad \gamma := \frac{\beta p}{p + d/\alpha},
\label{eq:continuité_Linfty}
\end{equation}
where $\beta$ is defined in \eqref{eq:continuité_holder} and $\alpha$ is given by (ii).
\end{enumerate}
\label{prop:periodique}
\end{proposition}

It is proved in~\cite{fusco1986homogenization} that $u_{\varepsilon}$ converges weakly~in $W^{1,p}(\Omega)$ to $u^*$ which is defined by~\eqref{eq:homogeneisee}. Note that~\eqref{eq:homogeneisee} is well posed due to the monoticity of $A^*$ (see \cite{dal1990correctors} and \cite[Corollary 8.1]{le2018nonlinear}). Convergence in the $L^{\infty}-$norm may be obtained in the one-dimensional setting, see Section~\ref{sect:1D} below.

\subsection{Results in the non-periodic case}

The first result of this contribution concerns the corrector equation \eqref{eq:intro_cvforte} in the setting \eqref{eq:nonlin}-\eqref{eq:intro_formecoeff}. For a fixed direction $\xi \in \RR^d$, this equation, posed on the whole space~$\RR^d$, is
\begin{equation}
- \text{div}\ a(y)(\xi + \nabla w_{\xi})|\xi + \nabla w_{\xi}|^{p-2} = 0,
\label{eq:cornonlin}
\end{equation}
where the coefficient $a$ is of the form $a := a^{\per} + \widetilde{a}$ and $a^{\per}$ is a periodic coefficient. We assume that $a$ and $a^{\per}$ satisfy the following assumptions:

\medskip

\textbf{(A1) there exists $\lambda > 0$ such that \eqref{eq:coercif} is satisfied;}

\medskip

\textbf{(A2) the coefficients $a$ and $a^{\per}$ are Lipschitz-continuous;}

\medskip

\textbf{(A3) the perturbation $\widetilde{a}$ vanishes at infinity in the sense that $\widetilde{a} \in L^{p'}(\RR^d)$.}

\medskip

A few comments are in order. First, if $\widetilde{a}$ satisfies $\widetilde{a} \in \mathcal{C}^{0,1}(\RR^d)$ and $\widetilde{a} \in L^q(\RR^d)$ for some $q \leq p'$ then $\widetilde{a}$ satisfies \textbf{(A3)} by interpolation. Second, Assumption \textbf{(A2)} allows to ensure local regularity (see Proposition~\ref{prop:periodique} above) of the periodic and non-periodic correctors. Finally, the assumptions of~\cite{BLLMilan} in the linear setting correspond to the case $p=2$ in the assumptions~\textbf{(A1)}-\textbf{(A2)}-\textbf{(A3)} above.



\medskip

We now consider the equation \eqref{eq:cornonlin} when the coefficient $a$ has the non-periodic structure \eqref{eq:intro_formecoeff}. For $u \in L^{\infty}(\RR^d)$, we define the spaces
\begin{equation}
\mathcal{W}_u := \left\{v \in W^{1,1}_{loc}(\RR^d), \quad \int_{\RR^d} \big|\nabla v\big|^p + \int_{\RR^d } \big| u \big|^{p-2} \big|\nabla v\big|^2 < +\infty \right\} \quad \text{and} \quad W_u := \mathcal{W}_u / \RR.
\label{eq:Wu}
\end{equation}
The space $W_u$ is endowed with the norm
\begin{equation}
\| v \|_{W_u} := \| \nabla v \|_{L^p(\RR^d)} + \big\| |u|^{\frac{p-2}{2}} \nabla v \big\|_{L^2(\RR^d)}.
\label{eq:normeWu}
\end{equation}
In the sequel, we denote undifferently functions and equivalence classes for the relation: $f \sim g$ if and only if $f - g$ is almost everywhere constant.
Lemma~\ref{lem:banach} below gathers some properties satisfied by spaces of the form~\eqref{eq:Wu}. In order to solve~\eqref{eq:cornonlin}, we seek for $w_{\xi}$ of the form $w_{\xi} = w_{\xi}^{\per} + \widetilde{w_{\xi}}$ where $w_{\xi}^{\per}$ is the solution to~\eqref{eq:cor_per} such that $\fint_Q w_{\xi}^{\per} = 0$. We transform the equation~\eqref{eq:cornonlin} into 
\begin{equation}
-\text{div} a \left[ \big|\xi + \nabla w_{\xi}^{\per} + \nabla \widetilde{w_{\xi}} \big|^{p-2}(\xi + \nabla w_{\xi}^{\per} + \nabla \widetilde{w_{\xi}}) - \big| \xi  + \nabla w_{\xi}^{\per} \big|^{p-2}(\xi + \nabla w_{\xi}^{\per}) \right] = \text{div}(h),
\label{eq:th2.2_gen2}
\end{equation}
where
\begin{equation}
h := \widetilde{a} (\xi + \nabla w_{\xi}^{\per})|\xi + \nabla w_{\xi}^{\per}|^{p-2}.
\label{eq:f_2}
\end{equation}
Assumption~\textbf{(A3)} and Proposition~\ref{prop:periodique} (ii) ensure that $h \in L^{p'}(\R^d)^d$.
\begin{definition}
We say that $\widetilde{w_{\xi}} \in W_{\xi + \nabla w_{\xi}^{\per}}$ is a solution in the weak sense in $W_{\xi + \nabla w_{\xi}^{\per}}$ to~\eqref{eq:th2.2_gen2} if for all $w \in W_{\xi + \nabla w_{\xi}^{\per}}$,
$$\int_{\R^d} a \left[ \big|\xi + \nabla w_{\xi}^{\per} + \nabla \widetilde{w_{\xi}} \big|^{p-2}(\xi + \nabla w_{\xi}^{\per} + \nabla \widetilde{w_{\xi}}) - \big| \xi  + \nabla w_{\xi}^{\per} \big|^{p-2}(\xi + \nabla w_{\xi}^{\per}) \right]\cdot\nabla w =- \int_{\R^d} h \cdot \nabla w.$$
\label{def:def}
\end{definition}
We easily check using Appendix~\ref{sect:ineq} that each integral appearing in Definition~\ref{def:def} is convergent. Note that if $\widetilde{w_{\xi}}$ is a solution to~\eqref{eq:th2.2_gen2} in the sense of Definition~\ref{def:def}, then it is a solution to~\eqref{eq:th2.2_gen2} in the distribution sense but it is not clear that the converse holds true. This is true if the weight $\xi + \nabla w_{\xi}^{\per}$ satisfies Assumption \textbf{(A4)'} below (see also Remark~\ref{re:A4_prime}).
\begin{theoreme}[Existence of the non-periodic correctors] Assume that the coefficient $a = a^{\per} + \widetilde{a}$ satisfies Assumptions \textbf{(A1)-(A2)-(A3)}. Then, for all $\xi \in \RR^d$, there exists a unique solution $w_{\xi}$ to equation~\eqref{eq:cornonlin} such that $w_{\xi} \in W^{1,1}_{loc}(\RR^d)$, $w_{\xi} = w_{\xi}^{\per} + \widetilde{w_{\xi}}$, where $\widetilde{w_{\xi}} \in W_{\xi + \nabla w_{\xi}^{\per}}$ is solution in the weak sense in $W_{\xi + \nabla w_{\xi}^{\per}}$ to \eqref{eq:th2.2_gen2}-\eqref{eq:f_2}. 
\label{th:existencecor}
\end{theoreme}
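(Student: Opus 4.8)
The plan is to solve the weak formulation in Definition~\ref{def:def} by a direct variational method, since equation~\eqref{eq:th2.2_gen2} is the Euler--Lagrange equation of a convex functional on the Banach space $W_{\xi + \nabla w_\xi^{\per}}$. First I would set $u := \xi + \nabla w_\xi^{\per}$ (a fixed function in $L^\infty \cap \mathcal C^{0,\alpha}$ by Proposition~\ref{prop:periodique}(ii)) and introduce the functional
\[
J(v) := \int_{\R^d} a\,\big(G(u + \nabla v) - G(u) - G'(u)\cdot\nabla v\big) \;+\; \int_{\R^d} h\cdot\nabla v, \qquad v \in W_u,
\]
where $G(\zeta) := \tfrac1p|\zeta|^p$ so that $G'(\zeta) = |\zeta|^{p-2}\zeta$. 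The inner integrand is the Bregman divergence of the strictly convex function $G$, hence nonnegative, and a direct computation of its Gateaux derivative at $v$ in direction $w$ recovers exactly the left-hand side of Definition~\ref{def:def} minus $\int h\cdot\nabla w$; thus critical points of $J$ are precisely weak solutions. I would record from Appendix~\ref{sect:ineq} the two-sided bound
\[
c\,\Big(|\nabla v|^p + |u|^{p-2}|\nabla v|^2\Big) \;\le\; G(u+\nabla v) - G(u) - G'(u)\cdot\nabla v \;\le\; C\,\Big(|\nabla v|^p + |u|^{p-2}|\nabla v|^2\Big),
\]
which is the standard comparison between the Bregman divergence of $|\cdot|^p$ and the weighted quadratic-plus-$p$ quantity defining the norm $\|\cdot\|_{W_u}$. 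This immediately gives $J(v) \ge c\|v\|_{W_u}^{\min(p,2)}$-type coercivity once the linear term is controlled.

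Next I would check that $W_u$ equipped with $\|\cdot\|_{W_u}$ is a reflexive (uniformly convex) Banach space — this is the content of the announced Lemma~\ref{lem:banach}, which I may assume — and that $J$ is well defined, continuous, and convex on it. Convexity is inherited from convexity of $\zeta \mapsto G(u(x)+\zeta)$ for each $x$; continuity follows from the upper bound above together with continuity of the linear functional $w \mapsto \int h\cdot\nabla w$, which is bounded on $W_u$ since $h \in L^{p'}(\R^d)^d$ by the remark following~\eqref{eq:f_2} and $\nabla w \in L^p$. For coercivity, I estimate $\big|\int h\cdot\nabla v\big| \le \|h\|_{L^{p'}}\|\nabla v\|_{L^p} \le \|h\|_{L^{p'}}\|v\|_{W_u}$ and absorb it into the quadratic/$p$-growth lower bound via Young's inequality; since $p \ge 2$ the $\|\nabla v\|_{L^p}^p$ term dominates for large norms, giving $J(v) \to +\infty$ as $\|v\|_{W_u} \to \infty$. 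A convex, continuous, coercive functional on a reflexive Banach space attains its minimum; strict convexity of $G$ (hence of $J$ modulo the linear part, which is affine) forces the minimizer, call it $\widetilde{w_\xi}$, to be unique in $W_u$. Writing the first-order optimality condition $\langle J'(\widetilde{w_\xi}), w\rangle = 0$ for all $w \in W_u$ is exactly Definition~\ref{def:def}; this also yields uniqueness of the weak solution. Finally, $w_\xi := w_\xi^{\per} + \widetilde{w_\xi}$ lies in $W^{1,1}_{\mathrm{loc}}(\R^d)$ (both summands do), and unwinding the algebra shows it solves~\eqref{eq:cornonlin} in the distributional sense; uniqueness in the stated class is inherited from uniqueness of $\widetilde{w_\xi}$ in $W_u$.

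The main obstacle is not the abstract variational argument but the functional-analytic groundwork around the weighted space $W_u$: one must verify that $\|\cdot\|_{W_u}$ is genuinely a norm (i.e.\ that $\nabla v = 0$ forces $v$ constant, so the quotient is well posed), that the space is complete and reflexive, and — crucially for differentiability of $J$ — that the Gateaux derivative of the Bregman-divergence integrand can be passed under the integral sign, which requires a dominated-convergence estimate uniform in the direction $w$. This last point is where the degeneracy at points where $u(x) = 0$ interacts delicately with the $p$-growth: one needs the elementary inequalities of Appendix~\ref{sect:ineq} controlling $\big||u+\zeta|^{p-2}(u+\zeta) - |u|^{p-2}u - \frac{d}{dt}\big|_{t=0}|u+t\zeta|^{p-2}(u+t\zeta)\big|$ by a multiple of the integrand's upper bound, so that the difference quotients defining $J'$ are dominated by an integrable function. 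Once Lemma~\ref{lem:banach} and these pointwise inequalities are in hand, the remainder is routine.
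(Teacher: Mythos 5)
Your proposal is correct and follows essentially the same route as the paper: the functional $J$ you write (the Bregman divergence of $\tfrac1p|\cdot|^p$ at $u=\xi+\nabla w_\xi^{\per}$ plus the linear term) is exactly the paper's $F_\xi$ built from $g_{\xi+\nabla w_\xi^{\per}}$, the two-sided bound you invoke is precisely~\eqref{eq:useful_ineq_4}, and the existence/uniqueness argument (direct method on the reflexive weighted space $W_{\xi+\nabla w_\xi^{\per}}$ via Lemma~\ref{lem:banach}, strict convexity, Euler--Lagrange equation coinciding with Definition~\ref{def:def}) is the one carried out in Section~\ref{sect:cor}. The technical points you flag at the end (completeness and weak compactness of $W_u$, and domination of the difference quotients to differentiate under the integral) are exactly what the paper disposes of in Lemmas~\ref{lem:banach} and~\ref{lem:diffF_u}.
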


In view of Theorem~\ref{th:existencecor}, we denote in the sequel $\widetilde{w_{\xi}} \in \mathcal{W}_{\xi + \nabla w_{\xi}^{\per}}$ the unique function such that $\displaystyle\int_{Q} \widetilde{w_{\xi}} = 0$. The function $w_{\xi}^{\per} + \widetilde{w_{\xi}}$ is a solution to~\eqref{eq:cornonlin} and $\widetilde{w_{\xi}}$ solves~\eqref{eq:th2.2_gen2}-\eqref{eq:f_2} in the sense of Definition~\ref{def:def}.
 We also define
\begin{equation}
w_{\xi} := w_{\xi}^{\per} + \widetilde{w_{\xi}} \in W^{1,p}_{\per}(Q) + \mathcal{W}_{\xi + \nabla w_{\xi}^{\per}}.
\label{eq:w_xi}
\end{equation}
The analogous properties of those given in Proposition~\ref{prop:periodique} are given in Theorem~\ref{th:th_nonlin} below for the non-linear correctors $w_{\xi}$, $\xi \in \R^d$. In order to obtain continuity results for the application $\xi \longmapsto \nabla w_{\xi}$, we need the following assumption: 

\medskip

\textbf{(A4) There exists $c > 0$ independent of $\xi \in \RR^d$ such that $|\xi + \nabla w_{\xi}^{\per}| \geq c|\xi|$ on $Q$.}
\medskip

\noindent We comment in Subsection~\ref{sect:comments} on this assumption. We are able to prove the following Theorem.

\begin{theoreme} Let $a := a^{\per} + \widetilde{a}$ be a non-periodic coefficient satisfying Assumptions \textbf{(A1)-(A2)-(A3)}. For $\xi \in \RR^d$, let $w_{\xi}$ be defined by~\eqref{eq:w_xi}. 
\begin{enumerate}[label=(\roman*)]
\item The map $\xi \longmapsto \nabla w_{\xi}$ is homogeneous in the sense that for all $\xi \in \RR^d$ and $t \in \RR$, 
\begin{equation}
\nabla w_{t\xi} = t \nabla w_{\xi}.
\label{eq:homog_nonper}
\end{equation}
\item There exists a constant $C = C(d,p,a) > 0$ and an exponent $\alpha = \alpha(d,p,a) > 0$ such that for all $\xi \in \RR^d$, $\nabla w_{\xi} \in L^p_{unif}(\RR^d)$, $\nabla w_{\xi} \in \mathcal{C}^{0,\alpha}(\RR^d)$ and, moreover, we have the estimates
\begin{equation}
\label{eq:estim_holder_per}
\| \nabla w_{\xi} \|_{L^p_{unif}(\RR^d)} \leq C|\xi| \quad \text{and} \quad \| \nabla w_{\xi} \|_{\mathcal{C}^{0,\alpha}(\RR^d)} \leq C|\xi|.  
\end{equation}
\item Assume that Assumption~\textbf{(A4)} is satisfied. Then there exists a constant $C = C(d,p,a,c) > 0$ independent of $\xi$ and $\eta$ such that, for all $\xi, \eta \in \RR^d$,
 \begin{equation}
\| \nabla \widetilde{w_{\xi}} - \nabla \widetilde{w_{\eta}} \|_{L^p(\RR^d)} \leq C \left( |\xi|^{1-\widetilde{\beta}} + |\eta|^{1 -\widetilde{\beta}} \right) | \xi - \eta|^{\widetilde{\beta} }, \quad \widetilde{\beta} = \frac{\gamma}{p-1}\min(1,p-2).
\label{eq:cont}
\end{equation}
where $\gamma$ is given by~\eqref{eq:continuité_Linfty}.
\item Assume that Assumption~\textbf{(A4)} is satisfied. Then there exists a constant $C = C(d,p,a,c) > 0$ and an exponent $\widetilde{\gamma} > 0$ both$  $ independent of $\xi$ and $\eta$ such that, for all $\xi, \eta \in \RR^d$, \begin{equation}
\| \nabla w_{\xi} - \nabla w_{\eta} \|_{L^{\infty}(\RR^d)} \leq C \left( |\xi|^{1-\widetilde{\gamma}} + |\eta|^{1 -\widetilde{\gamma}} \right) | \xi - \eta|^{\widetilde{\gamma}}.
\label{eq:contlinfinity}
\end{equation}
\end{enumerate}
\label{th:th_nonlin}
\end{theoreme}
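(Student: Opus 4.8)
Here is how I would go about proving Theorem~\ref{th:th_nonlin}. The plan is to treat the four items in order, each time using the previous ones to normalize $|\xi|=1$.

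\emph{Item (i)} is a consequence of the uniqueness statement of Theorem~\ref{th:existencecor}. Since $v\mapsto|v|^{p-2}v$ is odd and positively $(p-1)$-homogeneous, one has $|t\xi+t\nabla w|^{p-2}(t\xi+t\nabla w)=|t|^{p-2}t\,|\xi+\nabla w|^{p-2}(\xi+\nabla w)$, so both equation~\eqref{eq:cornonlin} and the weak formulation of Definition~\ref{def:def} are invariant under $(\xi,w)\mapsto(t\xi,tw)$; together with $w^{\per}_{t\xi}=t\,w^{\per}_{\xi}$ (Proposition~\ref{prop:periodique}(i)) and the obvious rescaling of the norm of $\mathcal{W}_{\xi+\nabla w^{\per}_{\xi}}$, this shows that $t\,w_{\xi}$ satisfies all the requirements characterizing $w_{t\xi}$, whence $\nabla w_{t\xi}=t\,\nabla w_{\xi}$. \emph{Item (ii)} may then be proved for $|\xi|=1$ and the general case recovered by rescaling. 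Testing the equation of Definition~\ref{def:def} with $w=\widetilde{w_{\xi}}$, using the monotonicity inequality $a\,(|A|^{p-2}A-|B|^{p-2}B)\cdot(A-B)\ge c\,|A-B|^{p}$ from Appendix~\ref{sect:ineq} with $A=\xi+\nabla w^{\per}_{\xi}+\nabla\widetilde{w_{\xi}}$ and $B=\xi+\nabla w^{\per}_{\xi}$, and the bound $\|h\|_{L^{p'}(\RR^d)}\le\|\widetilde{a}\|_{L^{p'}(\RR^d)}\,\|\,|\xi+\nabla w^{\per}_{\xi}|^{p-1}\|_{L^{\infty}}<+\infty$ (from \textbf{(A3)} and Proposition~\ref{prop:periodique}(ii)), one obtains $\|\nabla\widetilde{w_{\xi}}\|_{L^{p}(\RR^d)}^{p}\le C\,\|\nabla\widetilde{w_{\xi}}\|_{L^{p}(\RR^d)}$, hence $\|\nabla\widetilde{w_{\xi}}\|_{L^{p}(\RR^d)}\le C$ and, as a by-product, $\|\,|\xi+\nabla w^{\per}_{\xi}|^{(p-2)/2}\nabla\widetilde{w_{\xi}}\|_{L^{2}(\RR^d)}\le C$; combined with Proposition~\ref{prop:periodique}(ii) this gives $\|\nabla w_{\xi}\|_{L^{p}_{unif}(\RR^d)}\le C$. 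The $\mathcal{C}^{0,\alpha}$ bound then follows from the interior $\mathcal{C}^{1,\alpha}$ regularity theory for degenerate elliptic equations of $p$-Laplace type with Lipschitz coefficients (DiBenedetto, Tolksdorf, Uraltseva) applied to~\eqref{eq:cornonlin} on each ball $B(x,1)$: the $\mathcal{C}^{0,\alpha}(B(x,1))$ norm of $\nabla w_{\xi}$ is controlled by $\|\nabla w_{\xi}\|_{L^{p}(B(x,2))}\le C$, uniformly in $x$.

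\emph{Item (iii) is the core of the proof and the main obstacle.} If $|\xi-\eta|\ge\frac12\max(|\xi|,|\eta|)$, the estimate follows at once from the triangle inequality and the bound $\|\nabla\widetilde{w_{\zeta}}\|_{L^{p}}\le C|\zeta|$ of item (ii); so one may assume $\frac12|\xi|\le|\eta|\le2|\xi|$ with both nonzero and, by item (i), normalize $\max(|\xi|,|\eta|)=1$, so $|\xi|,|\eta|\in[\frac12,1]$. Here Assumption \textbf{(A4)} is essential: it forces $|\xi+\nabla w^{\per}_{\xi}|\ge c$ and $|\eta+\nabla w^{\per}_{\eta}|\ge c$ on $\RR^d$, so the weighted $L^{2}$ part of the $\mathcal{W}_{u}$ norm is equivalent to a plain $L^{2}$ norm; consequently $\nabla\widetilde{w_{\xi}},\nabla\widetilde{w_{\eta}}\in L^{p}(\RR^d)\cap L^{2}(\RR^d)$ with norms $\le C$ (by the by-product bound of item (ii)), and $\varphi:=\widetilde{w_{\xi}}-\widetilde{w_{\eta}}$ is an admissible test function \emph{in both} weak formulations — which would fail without \textbf{(A4)}, since $\nabla\widetilde{w_{\eta}}$ need not be globally in $L^{2}$. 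Writing $B_{\zeta}:=\zeta+\nabla w^{\per}_{\zeta}$ and $G(v):=|v|^{p-2}v$, I would subtract the two equations of Definition~\ref{def:def}, insert $\pm\,a\,[G(B_{\xi}+\nabla\widetilde{w_{\eta}})-G(B_{\xi})]$ in the integrand (legitimate because each resulting integral converges against $\nabla\varphi$, by Hölder's inequality and the $L^{p}\cap L^{2}$ bounds), and test with $\varphi$. The monotonicity inequality bounds the left-hand side below by $c\,\|\nabla\varphi\|_{L^{p}(\RR^d)}^{p}$ (with $A-B=\nabla\varphi$), while on the right there remain a data term, bounded by $\|\widetilde a\|_{L^{p'}}\,\|G(B_{\xi})-G(B_{\eta})\|_{L^{\infty}}\,\|\nabla\varphi\|_{L^{p}}\le C\,\|B_{\xi}-B_{\eta}\|_{L^{\infty}}\,\|\nabla\varphi\|_{L^{p}}$ (using $|G(a)-G(b)|\le C(|a|+|b|)^{p-2}|a-b|$), and a ``weight-mismatch'' term $\int a\,\big([G(B_{\xi}+\nabla\widetilde{w_{\eta}})-G(B_{\xi})]-[G(B_{\eta}+\nabla\widetilde{w_{\eta}})-G(B_{\eta})]\big)\cdot\nabla\varphi$, whose integrand is a mixed second difference of $G$. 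I would estimate the latter pointwise by $C\,|\nabla\widetilde{w_{\eta}}|\,|B_{\xi}-B_{\eta}|^{\min(1,p-2)}\,(1+|\nabla\widetilde{w_{\eta}}|)^{\max(0,p-3)}$ — the exponent $\min(1,p-2)$ being forced because $v\mapsto DG(v)$ is Lipschitz for $p\ge3$ but only $(p-2)$-Hölder for $2<p<3$ — and then by Hölder's inequality (splitting $(1+|\nabla\widetilde{w_{\eta}}|)^{p-3}\lesssim 1+|\nabla\widetilde{w_{\eta}}|^{p-3}$ when $p>3$ and using $L^{2}\times L^{2}$ and $L^{p}\times L^{p}$ pairings with the bounds of item (ii)) by $C\,\|B_{\xi}-B_{\eta}\|_{L^{\infty}}^{\min(1,p-2)}\,\|\nabla\varphi\|_{L^{p}}$. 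Since $\|B_{\xi}-B_{\eta}\|_{L^{\infty}}\le|\xi-\eta|+\|\nabla w^{\per}_{\xi}-\nabla w^{\per}_{\eta}\|_{L^{\infty}}\le C\,|\xi-\eta|^{\gamma}$ by Proposition~\ref{prop:periodique}(iv) on the normalized scale, collecting the bounds gives $\|\nabla\varphi\|_{L^{p}}^{p}\le C\,|\xi-\eta|^{\gamma\min(1,p-2)}\,\|\nabla\varphi\|_{L^{p}}$, hence $\|\nabla\varphi\|_{L^{p}}\le C\,|\xi-\eta|^{\widetilde\beta}$ with $\widetilde\beta=\frac{\gamma}{p-1}\min(1,p-2)$; rescaling by $\max(|\xi|,|\eta|)$ restores the prefactor $|\xi|^{1-\widetilde\beta}+|\eta|^{1-\widetilde\beta}$ and yields~\eqref{eq:cont}.

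\emph{Item (iv)} follows by interpolation. The field $\nabla w_{\xi}-\nabla w_{\eta}=(\nabla w^{\per}_{\xi}-\nabla w^{\per}_{\eta})+(\nabla\widetilde{w_{\xi}}-\nabla\widetilde{w_{\eta}})$ is bounded in $\mathcal{C}^{0,\alpha}(\RR^d)$ by $C(|\xi|+|\eta|)$ (item (ii)) and in $L^{p}_{unif}(\RR^d)$ by $C(|\xi|^{1-\widetilde\beta}+|\eta|^{1-\widetilde\beta})|\xi-\eta|^{\widetilde\beta}$ (Proposition~\ref{prop:periodique}(iii) and item (iii), noting $\widetilde\beta\le\beta$). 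Interpolating on each unit cube via $\|g\|_{L^{\infty}(k+Q)}\le C\|g\|_{\mathcal{C}^{0,\alpha}(k+Q)}^{\theta}\|g\|_{L^{p}(k+Q)}^{1-\theta}$ with $\theta=\frac{d}{d+\alpha p}$ — the very device by which Proposition~\ref{prop:periodique}(iv) is deduced from its points (ii)--(iii) — and reorganizing the powers of $|\xi|,|\eta|$ by homogeneity yields~\eqref{eq:contlinfinity} with, for instance, $\widetilde\gamma=(1-\theta)\widetilde\beta>0$. The delicate point, concentrated in item (iii), is thus twofold: using \textbf{(A4)} to make the comparison of $\widetilde{w_{\xi}}$ and $\widetilde{w_{\eta}}$ across the \emph{different} weighted spaces $\mathcal{W}_{\xi+\nabla w^{\per}_{\xi}}$ and $\mathcal{W}_{\eta+\nabla w^{\per}_{\eta}}$ meaningful, and controlling the mixed second difference of $v\mapsto|v|^{p-2}v$ with the sharp exponent $\min(1,p-2)$.
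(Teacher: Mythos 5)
Your items (i) and (ii) follow the paper's own arguments (uniqueness of the weak solution plus the homogeneity of $v\mapsto v|v|^{p-2}$; testing with $\widetilde{w_\xi}$ plus DiBenedetto-type interior regularity), and item (iv) is the same interpolation device used for Proposition~\ref{prop:periodique}~(iv). For item (iii) you take a genuinely different route from the paper: you subtract the two weak formulations and test with $\varphi:=\widetilde{w_\xi}-\widetilde{w_\eta}$, whereas the paper compares the energies $F_\xi$ and $F_\eta$ at their minimizers and at the midpoint $(\nabla\widetilde{w_\xi}+\nabla\widetilde{w_\eta})/2$ and invokes the convexity-defect inequality of Lemma~\ref{lem:lemcont} for the function $G_{\xi,\eta}$. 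That change of route is legitimate in principle; your observation that under \textbf{(A4)} the two weighted spaces coincide, so that $\varphi$ is admissible in both formulations, is correct.

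The gap is in the estimate of your ``weight-mismatch'' term. After the pointwise bound by $C\,|\nabla\widetilde{w_\eta}|\,|B_\xi-B_\eta|^{\min(1,p-2)}(1+|\nabla\widetilde{w_\eta}|)^{\max(0,p-3)}$, to reach $C\,\|B_\xi-B_\eta\|_{L^\infty}^{\min(1,p-2)}\|\nabla\varphi\|_{L^p}$ you must bound $\int_{\RR^d}|\nabla\widetilde{w_\eta}|\,|\nabla\varphi|$ by $C\,\|\nabla\varphi\|_{L^p(\RR^d)}$, which requires $\nabla\widetilde{w_\eta}\in L^{p'}(\RR^d)$ with a uniform bound. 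This does \emph{not} follow from the $L^p\cap L^2$ bounds of item (ii): since $p'=p/(p-1)<2<p$, the space $L^2\cap L^p(\RR^d)$ does not embed into $L^{p'}(\RR^d)$. The $L^2\times L^2$ pairing you invoke instead produces $\|\nabla\widetilde{w_\eta}\|_{L^2}\|\nabla\varphi\|_{L^2}$, and the factor $\|\nabla\varphi\|_{L^2}$ cannot be absorbed by the left-hand side $c\,\|\nabla\varphi\|_{L^p}^p$ (there is no control of $\|\cdot\|_{L^2}$ by $\|\cdot\|_{L^p}$ on the whole space); bounding that pairing by a constant only yields the weaker exponent $\gamma\min(1,p-2)/p$ rather than the claimed $\widetilde\beta=\gamma\min(1,p-2)/(p-1)$. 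The missing ingredient is precisely Theorem~\ref{th:coercivitypercor}, namely $\|\nabla\widetilde{w_\xi}\|_{L^{p'}(\RR^d)}\le C|\xi|$ under \textbf{(A4)}. This is a substantive intermediate result: the paper proves it by Taylor-expanding the monotone operator around $\xi+\nabla w_\xi^{\per}$, recasting the corrector equation as a linear periodic divergence-form equation with right-hand side in $L^{\max(p',p/2)}\cap L^\infty$, and bootstrapping with the Avellaneda--Lin $L^q$ estimates; and the paper's own proof of (iii) uses it at exactly the analogous point, to pair $|B_\xi-B_\eta|\,|\nabla\widetilde{w_\xi}+\nabla\widetilde{w_\eta}|$ against $|\nabla\varphi|$ in $L^\infty\times L^{p'}\times L^p$. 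Your argument must either quote this theorem or supply its proof before the conclusion of item (iii) is justified.
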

An important tool to obtain~Theorem~\ref{th:th_nonlin} (iii) is the following Theorem:

\begin{theoreme} Let $a := a^{\per} + \widetilde{a}$ be a non-periodic coefficient such that \textbf{(A1)-(A2)-(A3)-(A4)} are satisfied. 
For all $\xi \in \RR^d$, we have that $\nabla \widetilde{w_{\xi}} \in L^{p'}(\RR^d)$ and the estimate 
\begin{equation}
\| \nabla \widetilde{w_{\xi}} \|_{L^{p'}(\RR^d)} \leq C|\xi|
\label{th:estimLp_prime}
\end{equation} holds true where $C = C(d,p,a,c) >0$ is a constant independent of $\xi$.
\label{th:coercivitypercor}
\end{theoreme}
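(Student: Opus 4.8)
The plan is to pass to the linearised equation around the periodic corrector — a linear \emph{periodic} elliptic equation with divergence-form right-hand side — and then to invoke the Calder\'on--Zygmund $L^{p'}$-theory for such operators on $\RR^d$. Write $V := \xi + \nabla w_\xi^{\per}$, $G := \nabla\widetilde{w_\xi}$, $\sigma(z) := |z|^{p-2}z$, so that $D\sigma(z) = |z|^{p-2}\big(\mathrm{Id} + (p-2)\,\widehat z\otimes\widehat z\big)$ and $|z|^{p-2}|\zeta|^2 \le D\sigma(z)\zeta\cdot\zeta \le (p-1)|z|^{p-2}|\zeta|^2$. By Theorem~\ref{th:th_nonlin}(i) and Proposition~\ref{prop:periodique}(i) one has $\nabla\widetilde{w_{t\xi}} = t\,\nabla\widetilde{w_\xi}$ for all $t\in\RR$, so it suffices to treat $|\xi|=1$ (alternatively one keeps track of the factor $|\xi|^{p-2}$ carried by $D\sigma(V)$). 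Fix such a $\xi$. By Proposition~\ref{prop:periodique}(ii) and Assumption~\textbf{(A4)}, $c \le |V| \le C$ on $\RR^d$ and $\|V\|_{\mathcal C^{0,\alpha}(\RR^d)} \le C$, with $C$ depending only on $d,p,a,c$. Since $|\widetilde a| < 2\lambda$ by \textbf{(A1)} and $\widetilde a\in L^{p'}(\RR^d)$ by \textbf{(A3)}, one has $\widetilde a\in L^r(\RR^d)$ for every $r\in[p',\infty]$ with norm $\le C$; in particular $h = \widetilde a\,|V|^{p-2}V \in L^2(\RR^d)\cap L^{p'}(\RR^d)$ with $\|h\|_{L^2}+\|h\|_{L^{p'}}\le C$. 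Finally, since $\widetilde{w_\xi}\in W_V$ and $|V|\ge c$ we have $G\in L^2(\RR^d)\cap L^p(\RR^d)$, and the energy estimate for~\eqref{eq:th2.2_gen2} (test with $\widetilde{w_\xi}$, use the monotonicity inequality of Appendix~\ref{sect:ineq}, \textbf{(A4)}, and Young) gives $\|G\|_{L^p}+\|G\|_{L^2}\le C$, hence $\|G\|_{L^r}\le C$ for every $r\in[2,p]$.

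First I would make the linearisation precise. Writing $a = a^{\per}+\widetilde a$ and $\sigma(V+G) = \sigma(V)+D\sigma(V)G+R$ with the remainder $R := \int_0^1\big(D\sigma(V+tG)-D\sigma(V)\big)G\,dt$, Definition~\ref{def:def} (tested against $C_c^\infty(\RR^d)\subset W_V$) shows that $G$ solves, in $\mathcal D'(\RR^d)$,
\[
-\div\big(\mathcal B\,G\big) = \div\widetilde h,\qquad \mathcal B := a^{\per}D\sigma(V),\qquad \widetilde h := h + a^{\per}R + \widetilde a\big(\sigma(V+G)-\sigma(V)\big).
\]
The coefficient $\mathcal B$ is \emph{periodic} (because $V$ is), lies in $\mathcal C^{0,\alpha}(\RR^d)$ with norm $\le C$, and — this is precisely where \textbf{(A4)} is essential — is uniformly elliptic and bounded, $\lambda^{-1}c^{p-2}|\zeta|^2\le\mathcal B(x)\zeta\cdot\zeta\le\lambda(p-1)C^{p-2}|\zeta|^2$; in particular it does not degenerate in the far field, where $|G|$ is small. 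For such an operator the whole-space Calder\'on--Zygmund estimate holds for every exponent in $(1,\infty)$ (by the theory of divergence-form operators with continuous coefficients, or by the periodic structure, cf.~\cite{AL,BLLMilan}): if $-\div(\mathcal B\nabla\phi)=\div F$ with $F\in L^q(\RR^d)^d$ then $\|\nabla\phi\|_{L^q(\RR^d)}\le C_q\|F\|_{L^q(\RR^d)}$. I would apply this with $q=p'$ to produce $\Phi$ with $\nabla\Phi\in L^{p'}(\RR^d)$ and $\|\nabla\Phi\|_{L^{p'}}\le C\|\widetilde h\|_{L^{p'}}$; then $\widetilde{w_\xi}-\Phi$ is $\mathcal B$-harmonic, and using $G\in L^2(\RR^d)$, $\nabla\Phi\in L^{p'}(\RR^d)$ and H\"older's inequality on balls, $\nabla(\widetilde{w_\xi}-\Phi)$ grows at most like $R^{d(1/p'-1/2)}$ on $B_R$, so the interior gradient estimate for $\mathcal B$-harmonic functions (available since $\mathcal B\in\mathcal C^{0,\alpha}$) gives $\|\nabla(\widetilde{w_\xi}-\Phi)\|_{L^\infty(B_{R/2})}\le C R^{-d/2}\to 0$; hence $G=\nabla\Phi$ and $\|G\|_{L^{p'}(\RR^d)}\le C\|\widetilde h\|_{L^{p'}(\RR^d)}$.

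It then remains to show $\|\widetilde h\|_{L^{p'}(\RR^d)}\le C$, which is where the elementary $p$-Laplacian inequalities of Appendix~\ref{sect:ineq} are combined with the $L^r$-bounds on $G$. The term $h$ is already controlled. For the defect term, $|\sigma(V+G)-\sigma(V)|\le C(|V|+|G|)^{p-2}|G|\le C(|G|+|G|^{p-1})$ (using $|V|\le C$), so H\"older's inequality with $\widetilde a\in L^{p/(p-2)}\cap L^\infty$ yields $\big\|\widetilde a(\sigma(V+G)-\sigma(V))\big\|_{L^{p'}}\le C\|G\|_{L^p}+C\|G\|_{L^p}^{p-1}\le C$. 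For $R$ I would split into two ranges: if $2\le p\le 3$, the inequality $|D\sigma(z)-D\sigma(z')|\le C|z-z'|^{p-2}$ gives $|R|\le C|G|^{p-1}$, hence $\|R\|_{L^{p'}}\le C\|G\|_{L^p}^{p-1}\le C$; if $p\ge 3$, the inequality $|D\sigma(z)-D\sigma(z')|\le C(|z|+|z'|)^{p-3}|z-z'|$ together with $|V|\le C$ gives $|R|\le C(|G|^2+|G|^{p-1})$, and since $2\le 2p'\le p$ in this range, interpolation yields $\|R\|_{L^{p'}}\le C\|G\|_{L^{2p'}}^2+C\|G\|_{L^p}^{p-1}\le C$. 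Collecting the bounds gives $\|\widetilde h\|_{L^{p'}}\le C$, so $\|\nabla\widetilde{w_\xi}\|_{L^{p'}(\RR^d)}\le C$ for $|\xi|=1$, and~\eqref{th:estimLp_prime} follows by homogeneity.

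The hard part is exactly the gain from the natural energy class $L^2\cap L^p$ down to $L^{p'}$ with $p'<2$: this is not an interpolation (the far field has infinite measure) and genuinely relies on the decay encoded in the divergence-form right-hand side $\div\widetilde h$ of the linearised equation. Two features must cooperate, and each uses the hypotheses in full: Assumption~\textbf{(A4)} keeps the linearised periodic operator $-\div(a^{\per}D\sigma(V)\nabla\cdot)$ uniformly elliptic — it would otherwise degenerate precisely where $|G|$ is small, that is in the far field — so that its whole-space $L^{p'}$-theory is available; and the upper bound $|V|\le C$, which comes from the boundedness of $\nabla w_\xi^{\per}$ in Proposition~\ref{prop:periodique}(ii), is what reduces the estimates for $R$ and for $\widetilde a(\sigma(V+G)-\sigma(V))$ to powers of $|G|$ already controlled by the energy estimate. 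The only genuine bookkeeping is the separation of the cases $2\le p\le 3$ and $p\ge 3$ in the estimate of the quadratic remainder $R$.
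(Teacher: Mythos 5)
Your proof is correct and follows the same core strategy as the paper: linearise $\sigma(\xi+\nabla w_\xi^{\per}+\cdot)$ around the periodic field $V=\xi+\nabla w_\xi^{\per}$, use \textbf{(A4)} to make the resulting periodic coefficient $a^{\per}D\sigma(V)$ uniformly elliptic and H\"older continuous, push the defect and the Taylor remainder to the right-hand side, and invoke the Avellaneda--Lin $L^q$ theory for periodic divergence-form operators (the paper cites \cite{avellaneda1991lp} and the associated Liouville theorem, which also subsumes the identification step you carry out by hand via the $\mathcal B$-harmonic difference). The one genuine divergence is in the treatment of the quadratic remainder: the paper bounds it by $C|\nabla\widetilde{w_\xi}|^2\in L^{p/2}\cap L^\infty$ using only $\nabla\widetilde{w_\xi}\in L^p$, and therefore has to bootstrap, gaining integrability $L^{p/2^k}$ at each application of the linear theory until $p/2^k\le p'$; you instead exploit that \textbf{(A4)} makes the energy space $W_V$ embed into $\overset{\circ}{H^1}$, so $\nabla\widetilde{w_\xi}\in L^2\cap L^p$ from the outset, and a single interpolation (with the case split $2\le p\le3$ versus $p\ge3$ governed by whether $2p'\le p$) puts the whole right-hand side in $L^{p'}$ in one shot. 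Your route is slightly cleaner in that it removes the iteration; the paper's bootstrap has the minor advantage of not needing the $L^2$ bound on $\nabla\widetilde{w_\xi}$ at that stage. Both arguments use \textbf{(A4)} twice and in the same two places (non-degeneracy of the linearised operator, and control of the remainder), so neither weakens the hypotheses.
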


\begin{remarque}
Note that, under Assumption \textbf{(A4)}, the non-periodic part $\nabla \widetilde{w_{\xi}}$ of the corrector has the same integrability as the defect $\widetilde{a}$ at infinity. This is reminiscent of the linear case $p=2$, see~\cite{BLLcpde}.
\label{rem:th2.5}
\end{remarque}


Using Theorem~\ref{th:th_nonlin}, we can prove qualitative results concerning the homogenization of~ \eqref{eq:nonlin} in the non-periodic setting. 

\begin{theoreme}
\label{th:homog_qualititative}
Let $\Omega$ be a bounded smooth domain, $f \in L^{p'}(\Omega)$, $a := a^{\per} + \widetilde{a}$ be a scalar-valued coefficient satisfying Assumptions \textbf{(A1)-(A2)-(A3)}. For $\varepsilon>0$, let $u_{\varepsilon} \in W^{1,p}_0(\Omega)$ be the solution to~\eqref{eq:nonlin}. 
\begin{enumerate}[label=(\roman*)]
\item We have that $u_{\varepsilon} \underset{\varepsilon \rightarrow 0}{\relbar\joinrel\rightharpoonup} u^*$ weakly in $W^{1,p}(\Omega)$ and $u_{\varepsilon} \underset{\varepsilon \rightarrow 0}{\longrightarrow} u^*$ strongly in $L^p(\Omega)$, where $u^*$ solves Problem~\eqref{eq:homogeneisee} and 
\begin{equation}
\forall \xi \in \RR^d, \quad a^*(\xi) = \int_{Q} a^{\per}(y)(\xi + \nabla w_{\xi}^{\per})|\xi +\nabla w_{\xi}^{\per}|^{p-2}\text{d}y.
\label{eq:coeffhomog}
\end{equation}
Besides, we have the $L^{p'}(\Omega)-$weak convergence $a(./\varepsilon)\nabla u_{\varepsilon}|\nabla u_{\varepsilon}|^{p-2} \underset{\varepsilon\rightarrow 0}{\relbar\joinrel\rightharpoonup} a^*(\nabla u^*)$.
\item Assume that \textbf{(A4)} is satisfied. Then, we have the strong convergence 
\begin{equation}
\nabla u_{\varepsilon} - \nabla u^* - \nabla w_{M_{\varepsilon} \nabla u^*} \big(\frac{\cdot}{\varepsilon} \big) \underset{\varepsilon \rightarrow 0}{\longrightarrow} 0 \quad \text{in} \quad L^p(\Omega),
\label{eq:cvforte}
\end{equation}
where $M_{\varepsilon}$ is defined by~\eqref{eq:M_eps}.
\item We have the strong convergence 
\begin{equation}
\nabla u_{\varepsilon} - \nabla u^* - \nabla w^{\per}_{M_{\varepsilon} \nabla u^*} \big(\frac{\cdot}{\varepsilon} \big) \underset{\varepsilon \rightarrow 0}{\longrightarrow} 0 \quad \text{in} \quad L^p(\Omega),
\label{eq:cvforte_per}
\end{equation}
where $M_{\varepsilon}$ is defined by~\eqref{eq:M_eps}.
\end{enumerate}
\end{theoreme}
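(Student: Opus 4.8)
The plan is to treat part (i) by Tartar's method of oscillating test functions, adapted to the fact that the defect is asymptotically negligible; to obtain part (iii) by running the classical corrector argument of \cite{dal1990correctors} with the full coefficient $a$ in the uniform–convexity step while keeping the \emph{periodic} correctors in the expansion; and finally to deduce part (ii) from (iii) together with the estimates of Theorems~\ref{th:th_nonlin} and~\ref{th:coercivitypercor}. The single elementary fact used throughout is that, $a$ and $a^{\per}$ being bounded, $\widetilde a\in L^\infty(\R^d)\cap L^{p'}(\R^d)\subset L^r(\R^d)$ for every $r\in[p',\infty]$, hence (since $p\ge 2$) $\widetilde a\in L^p(\R^d)$; the scaling identity $\|g(\cdot/\varepsilon)\|_{L^r(\Omega)}^r\le \varepsilon^d\|g\|_{L^r(\R^d)}^r$ then yields $\widetilde a(\cdot/\varepsilon)\to 0$ strongly in every $L^r(\Omega)$, $r<\infty$, and $\nabla\widetilde{w_\xi}(\cdot/\varepsilon)\to 0$ strongly in $L^p(\Omega)$ for each fixed $\xi$.

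For (i): testing \eqref{eq:nonlin} with $u_\varepsilon$ and using \eqref{eq:coercif} bounds $(u_\varepsilon)$ in $W^{1,p}_0(\Omega)$ and the flux $\sigma_\varepsilon:=a(\cdot/\varepsilon)|\nabla u_\varepsilon|^{p-2}\nabla u_\varepsilon$ in $L^{p'}$, so along a subsequence $u_\varepsilon\rightharpoonup u$ in $W^{1,p}$, $u_\varepsilon\to u$ in $L^p$, $\sigma_\varepsilon\rightharpoonup\sigma$ with $\div\sigma=-f$. For fixed $\xi\in\R^d$ the function $v^\xi_\varepsilon(x):=\xi\cdot x+\varepsilon w_\xi^{\per}(x/\varepsilon)$ solves the rescaled periodic corrector equation, so $\nabla v^\xi_\varepsilon\rightharpoonup\xi$ in $L^p$, $\nabla v^\xi_\varepsilon$ is bounded in $L^\infty$ by Proposition~\ref{prop:periodique}(ii), $\tau^\xi_\varepsilon:=a^{\per}(\cdot/\varepsilon)|\nabla v^\xi_\varepsilon|^{p-2}\nabla v^\xi_\varepsilon\rightharpoonup a^*(\xi)$ in $L^{p'}$ with $\div\tau^\xi_\varepsilon=0$, and $\widetilde\tau^\xi_\varepsilon:=a(\cdot/\varepsilon)|\nabla v^\xi_\varepsilon|^{p-2}\nabla v^\xi_\varepsilon$ differs from $\tau^\xi_\varepsilon$ by $\widetilde a(\cdot/\varepsilon)|\nabla v^\xi_\varepsilon|^{p-2}\nabla v^\xi_\varepsilon\to 0$ in $L^{p'}$, hence $\widetilde\tau^\xi_\varepsilon\rightharpoonup a^*(\xi)$ and $\div\widetilde\tau^\xi_\varepsilon\to 0$ in $W^{-1,p'}$. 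Monotonicity of $\eta\mapsto a(y)|\eta|^{p-2}\eta$ gives the pointwise sign $(\sigma_\varepsilon-\widetilde\tau^\xi_\varepsilon)\cdot(\nabla u_\varepsilon-\nabla v^\xi_\varepsilon)\ge 0$; applying the div–curl lemma to $\sigma_\varepsilon-\widetilde\tau^\xi_\varepsilon$ (precompact divergence in $W^{-1,p'}$) and the curl–free $\nabla u_\varepsilon-\nabla v^\xi_\varepsilon$ yields $(\sigma_\varepsilon-\widetilde\tau^\xi_\varepsilon)\cdot(\nabla u_\varepsilon-\nabla v^\xi_\varepsilon)\rightharpoonup(\sigma-a^*(\xi))\cdot(\nabla u-\xi)$ in $\mathcal{D}'(\Omega)$, so $(\sigma-a^*(\xi))\cdot(\nabla u-\xi)\ge 0$ a.e. Letting $\xi$ run over a countable dense set, using continuity of $a^*$, then taking $\xi=\nabla u(x)+t\zeta$ at a Lebesgue point and $t\to 0^\pm$ (Minty), we get $\sigma=a^*(\nabla u)$ a.e.; uniqueness of \eqref{eq:homogeneisee} gives $u=u^*$ and convergence of the whole sequence, and the flux statement is $\sigma_\varepsilon\rightharpoonup a^*(\nabla u^*)$.

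For (iii): by approximation and truncation it suffices to treat $u^*\in W^{1,\infty}(\Omega)\cap W^{1,p}_0(\Omega)$, for which the values $\xi_k$ of $M_\varepsilon\nabla u^*$ are uniformly bounded and $M_\varepsilon\nabla u^*\to\nabla u^*$ in $L^p$ (the reduction uses Proposition~\ref{prop:periodique}(iii)). With $E_\varepsilon:=\nabla u^*+\nabla w^{\per}_{M_\varepsilon\nabla u^*}(\cdot/\varepsilon)$ on $\bigcup_k\varepsilon(k+Q)$, Proposition~\ref{prop:periodique}(ii) makes $E_\varepsilon$ bounded in $L^\infty$, one has $E_\varepsilon\rightharpoonup\nabla u^*$ in $L^p$, and — since $\widetilde a(\cdot/\varepsilon)|E_\varepsilon|^{p-2}E_\varepsilon\to 0$ in $L^{p'}$, the periodic part averages out, and the cell identity $\int_Q a^{\per}(\eta+\nabla w^{\per}_\eta)|\eta+\nabla w^{\per}_\eta|^{p-2}\cdot(\eta+\nabla w^{\per}_\eta)=a^*(\eta)\cdot\eta$ holds — one gets $a(\cdot/\varepsilon)|E_\varepsilon|^{p-2}E_\varepsilon\rightharpoonup a^*(\nabla u^*)$ in $L^{p'}$ and $\int_\Omega a(\cdot/\varepsilon)|E_\varepsilon|^{p-2}E_\varepsilon\cdot E_\varepsilon\to\int_\Omega a^*(\nabla u^*)\cdot\nabla u^*$. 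Since moreover $\int_\Omega a(\cdot/\varepsilon)|\nabla u_\varepsilon|^p=\int_\Omega fu_\varepsilon\to\int_\Omega fu^*=\int_\Omega a^*(\nabla u^*)\cdot\nabla u^*$ (the last step testing \eqref{eq:homogeneisee} with $u^*$), the uniform–convexity inequality of Appendix~\ref{sect:ineq} gives $c\int_\Omega|\nabla u_\varepsilon-E_\varepsilon|^p\le\int_\Omega a(\cdot/\varepsilon)(|\nabla u_\varepsilon|^{p-2}\nabla u_\varepsilon-|E_\varepsilon|^{p-2}E_\varepsilon)\cdot(\nabla u_\varepsilon-E_\varepsilon)$, whose right–hand side expands into four terms: the two pure terms converge to $\int_\Omega a^*(\nabla u^*)\cdot\nabla u^*$, and the two cross terms converge to $-\int_\Omega a^*(\nabla u^*)\cdot\nabla u^*$ by the div–curl lemma (the divergence of $\sigma_\varepsilon$ is fixed, that of $a(\cdot/\varepsilon)|E_\varepsilon|^{p-2}E_\varepsilon$ equals $-\div[\widetilde a(\cdot/\varepsilon)|E_\varepsilon|^{p-2}E_\varepsilon]\to 0$ on each cube, and the interface contributions to the curls of $E_\varepsilon$ and the divergence of the flux built on $E_\varepsilon$ are precompact in the relevant negative Sobolev spaces, exactly the interface estimate of \cite{dal1990correctors}, together with the usual boundary argument using $u_\varepsilon,u^*\in W^{1,p}_0(\Omega)$). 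Hence the right–hand side tends to $0$, i.e. \eqref{eq:cvforte_per}, and the general $u^*$ follows by approximation.

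For (ii), assume \textbf{(A4)} and write $\nabla u_\varepsilon-\nabla u^*-\nabla w_{M_\varepsilon\nabla u^*}(\cdot/\varepsilon)=\big(\nabla u_\varepsilon-\nabla u^*-\nabla w^{\per}_{M_\varepsilon\nabla u^*}(\cdot/\varepsilon)\big)-\nabla\widetilde{w_{M_\varepsilon\nabla u^*}}(\cdot/\varepsilon)$; the first bracket tends to $0$ in $L^p(\Omega)$ by (iii). Reducing again to $u^*\in W^{1,\infty}$ so that $|\xi_k|\le M$, the estimate of Theorem~\ref{th:th_nonlin}(iii), together with $\widetilde{w_0}=0$ and homogeneity, shows that $\{\nabla\widetilde{w_\xi}:|\xi|\le M\}$ is bounded and uniformly Hölder–continuous in $L^p(\R^d)$, hence relatively compact there, hence equi-tight: $\rho(R):=\sup_{|\xi|\le M}\int_{\R^d\setminus B_R}|\nabla\widetilde{w_\xi}|^p\to 0$. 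Summing the rescaled contributions over the cubes $\varepsilon(k+Q)\subset\Omega$ gives $\int_\Omega|\nabla\widetilde{w_{M_\varepsilon\nabla u^*}}(\cdot/\varepsilon)|^p=\varepsilon^d\sum_k\int_{k+Q}|\nabla\widetilde{w_{\xi_k}}|^p\le C\varepsilon^d R_0^d+C\rho(R_0-\sqrt d)$ for every $R_0$ (bounding the $O(R_0^d)$ central cubes by $\sup_{|\xi|\le M}\|\nabla\widetilde{w_\xi}\|_{L^p(\R^d)}^p$ and each outer cube by $\rho(R_0-\sqrt d)$, there being $O(\varepsilon^{-d})$ cubes in all); letting $\varepsilon\to 0$ then $R_0\to\infty$ gives $\nabla\widetilde{w_{M_\varepsilon\nabla u^*}}(\cdot/\varepsilon)\to 0$ in $L^p(\Omega)$, which is \eqref{eq:cvforte}. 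The main difficulty lies in the corrector statements (ii)–(iii): on the one hand in carrying the defect $\widetilde a$ through the oscillating–test–function and div–curl machinery, which goes through precisely because $\widetilde a(\cdot/\varepsilon)\to 0$ strongly in $L^{p'}$ while the periodic correctors are uniformly bounded in $L^\infty$ by Proposition~\ref{prop:periodique}(ii); on the other hand in the equi-tightness of $\{\nabla\widetilde{w_\xi}\}_{|\xi|\le M}$, where Theorem~\ref{th:th_nonlin}(iii) (hence Assumption \textbf{(A4)}) and the truncation handling the possible unboundedness of $M_\varepsilon\nabla u^*$ are essential.
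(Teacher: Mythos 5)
Your proposal is correct, and the core machinery (monotonicity of the $p$-Laplacian, div--curl lemma, the energy identity $\int_\Omega a(\cdot/\varepsilon)|\nabla u_\varepsilon|^p=\int_\Omega fu_\varepsilon$, approximation of $\nabla u^*$ by step functions via Proposition~\ref{prop:periodique}(iii), and the observation that $\widetilde a(\cdot/\varepsilon)\to 0$ strongly in $L^{p'}(\Omega)$ kills every defect contribution against the $L^\infty$-bounded periodic correctors) is the same as in the paper, which runs the argument of \cite{dal1990correctors} through Lemmas~\ref{lem:cv_remainder} and~\ref{lem:equiv_lem_3.5}. The genuine difference is structural: you prove (iii) \emph{first}, with the periodic correctors and without \textbf{(A4)} (which matches the hypotheses of the statement), and then deduce (ii) by showing $\nabla\widetilde{w_{M_\varepsilon\nabla u^*}}(\cdot/\varepsilon)\to0$ in $L^p(\Omega)$; the paper instead proves (ii) directly with the full correctors $p(\cdot/\varepsilon,M_\varepsilon\nabla u^*)$ and obtains (iii) afterwards by subtracting that same vanishing term. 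Your substitute for Lemma~\ref{lem:cv_remainder} — compactness of $\{\nabla\widetilde{w_\xi}:|\xi|\le M\}$ in $L^p(\RR^d)$ via the H\"older continuity of Theorem~\ref{th:th_nonlin}(iii), hence equi-tightness $\rho(R)\to0$, combined with a truncation of $M_\varepsilon\nabla u^*$ — is valid and buys the same uniform tail control; the paper's version exploits homogeneity ($\|\nabla\widetilde{w_\xi}\|_{L^p(x+Q)}=|\xi|\,\|\nabla\widetilde{w_{\xi/|\xi|}}\|_{L^p(x+Q)}$) to get the decay for \emph{all} $\xi$ at once and thereby avoids the truncation step, at the price of only using continuity in $L^p_{\mathrm{unif}}$. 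Two small imprecisions worth fixing: the corrector field in (iii) should be $M_\varepsilon\nabla u^*+\nabla w^{\per}_{M_\varepsilon\nabla u^*}(\cdot/\varepsilon)$ (or its step-function version), not $\nabla u^*+\nabla w^{\per}_{M_\varepsilon\nabla u^*}(\cdot/\varepsilon)$, so that the cell equations hold exactly on each cube — the discrepancy $M_\varepsilon\nabla u^*-\nabla u^*\to0$ in $L^p$ is removed only at the very end; and one cannot literally "truncate $u^*$", which is fixed by the homogenized problem — the correct reduction is to approximate $\nabla u^*$ by a bounded step function $\Phi$ and control the error through Proposition~\ref{prop:periodique}(iii) (for the periodic part) and Theorem~\ref{th:th_nonlin}(iii) (for the defect part), as you indicate parenthetically.
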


\noindent We stress that, instead of assuming \textbf{(A4)}, Theorem~\ref{th:homog_qualititative} can be proved under the assumption that the mapping
\begin{equation}
\Phi_p:
\begin{cases}
\begin{aligned}
\RR^d & \longrightarrow L^p_{unif}(\RR^d) \\
\xi & \longmapsto \nabla w_{\xi}
\end{aligned}
\end{cases}
\label{eq:xi-w_xi}
\end{equation}
is continuous. This continuity can be obtained under the following Assumption \textbf{(A4)'} which is clearly weaker than Assumption \textbf{(A4)}:

\medskip

\textbf{(A4)' For $\xi \in \R^d$, there exists a constant $C > 0$ that may depend on $\xi$ and $\nabla w_{\xi}^{\per}$ such that the following weighted Poincar\'e-Wirtinger inequality holds true: there exists $r_{min} > 0$ such that for all $R > r_{min}$ and $w \in H^1\big(Q \setminus Q_{1/2}\big)$,
\begin{equation}
\left\||\xi + \nabla w_{\xi}^{\per}(R\cdot)|^{\frac{p-2}{2}} \left(w - \fint_{Q \setminus Q_{1/2}} w \right) \right\|_{L^2(Q \setminus Q_{1/2})} \leq C\big\||\xi + \nabla w_{\xi}^{\per}(R\cdot)|^{\frac{p-2}{2}} \nabla w \|_{L^2(Q \setminus Q_{1/2})}.
\label{eq:A4_prime}
\end{equation}}

\medskip

We comment in Subsection~\ref{sect:comments} on Assumption \textbf{(A4)'} and we will provide a sufficient condition on $\xi + \nabla w_{\xi}^{\per}$ so that~\eqref{eq:A4_prime} is satisfied. 
 We are able to prove the following Theorem:

\begin{theoreme}
Assume that \textbf{(A1)-(A2)-(A3)-(A4)'} are satisfied. Then the mapping $\Phi_p$ defined by~\eqref{eq:xi-w_xi} is continuous. Hence the conclusion of Theorem~\ref{th:homog_qualititative} holds true.
\label{th:cont}
\end{theoreme}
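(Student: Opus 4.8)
The plan is to show that $\Phi_p$ is continuous at an arbitrary fixed $\xi_0 \in \RR^d$, and then to invoke Theorem~\ref{th:homog_qualititative} verbatim (its proof, by inspection, only uses continuity of $\Phi_p$, not \textbf{(A4)} itself). By the homogeneity property Theorem~\ref{th:th_nonlin}(i), $\nabla w_{t\xi} = t\nabla w_\xi$, so it suffices to handle the case $|\xi_0| = 1$, say; indeed for $\xi$ near $\xi_0$ one writes $\xi = |\xi|\,\widehat{\xi}$ with $\widehat{\xi} = \xi/|\xi|$ on the unit sphere, and reduces to continuity on the sphere plus the uniform bound $\|\nabla w_\xi\|_{L^p_{unif}} \le C|\xi|$ from Theorem~\ref{th:th_nonlin}(ii) to absorb the radial variation. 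So the heart of the matter is: given $\xi_n \to \xi_0$, show $\nabla w_{\xi_n} \to \nabla w_{\xi_0}$ in $L^p_{unif}(\RR^d)$. Since $w_{\xi} = w_\xi^{\per} + \widetilde{w_\xi}$ and continuity of $\xi \mapsto \nabla w_\xi^{\per}$ in $L^p_{unif}$ is already contained in Proposition~\ref{prop:periodique}(iii), everything reduces to proving $\nabla \widetilde{w_{\xi_n}} \to \nabla \widetilde{w_{\xi_0}}$ in $L^p(\RR^d)$.

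The strategy for this is a compactness-plus-uniqueness argument. First I would establish a uniform bound: from the energy estimate for \eqref{eq:th2.2_gen2} (test with $\widetilde{w_{\xi_n}}$ itself, use coercivity \textbf{(A1)}, the monotonicity inequalities from Appendix~\ref{sect:ineq}, and $\|h_n\|_{L^{p'}} \le C|\xi_n|$ which follows from \textbf{(A3)} and Proposition~\ref{prop:periodique}(ii)) one gets $\|\widetilde{w_{\xi_n}}\|_{W_{\xi_n + \nabla w_{\xi_n}^{\per}}} \le C$, in particular $\nabla\widetilde{w_{\xi_n}}$ bounded in $L^p(\RR^d)$. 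Extract a weak limit $\nabla \widetilde{w_{\xi_n}} \rightharpoonup G$ in $L^p(\RR^d)$ along a subsequence, with $G = \nabla v$ for some $v \in W^{1,1}_{loc}$. The delicate point is identifying $v$ as $\widetilde{w_{\xi_0}}$: one must pass to the limit in the weak formulation of Definition~\ref{def:def}. The nonlinear term $a\big[|\xi_n + \nabla w_{\xi_n}^{\per} + \nabla\widetilde{w_{\xi_n}}|^{p-2}(\cdots) - |\xi_n + \nabla w_{\xi_n}^{\per}|^{p-2}(\cdots)\big]$ converges weakly to \emph{some} limit by the uniform bounds, but to show that limit is the right monotone expression evaluated at $\xi_0 + \nabla w_{\xi_0}^{\per} + \nabla v$ I would use a Minty–Browder monotonicity argument localized on the weighted space: the point of Assumption \textbf{(A4)'} is precisely that the weighted Poincaré–Wirtinger inequality \eqref{eq:A4_prime} makes $W_{\xi + \nabla w_\xi^{\per}}$ a genuine Hilbert-like space on which the monotone operator is coercive and the exterior-domain estimates control the far field, so that weak convergence of the fluxes can be upgraded using the standard trick $\limsup \int a[\cdots]\cdot(\nabla\widetilde{w_{\xi_n}} - \nabla v) \le 0$.

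I expect the main obstacle to be exactly this limit-passage in the nonlinear flux on the unbounded domain $\RR^d$: weak $L^p$ convergence of $\nabla\widetilde{w_{\xi_n}}$ is not enough for the $p-$Laplacian nonlinearity, and there is no compact Sobolev embedding globally, so one cannot directly get a.e.\ convergence of gradients. The resolution is to combine (a) local strong compactness — on each ball $B_R$, the uniform $W^{1,p}$ bound plus the equation give, via the standard div–curl / Minty argument on bounded domains, that $\nabla\widetilde{w_{\xi_n}} \to \nabla v$ strongly in $L^p_{loc}$ (hence a.e.\ up to a further subsequence) — with (b) uniform smallness of the tails, which is where \textbf{(A4)'} and the decay $h_n \in L^{p'}$ enter: the weighted Poincaré inequality on annuli $Q\setminus Q_{1/2}$ rescaled to $Q_R \setminus Q_{R/2}$ yields a Caccioppoli-type estimate showing $\int_{\RR^d \setminus B_R} |\nabla\widetilde{w_{\xi_n}}|^p \to 0$ as $R \to \infty$, uniformly in $n$. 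Together these give strong $L^p(\RR^d)$ convergence of the gradients along the subsequence, pass the nonlinearity, identify the limit as $\widetilde{w_{\xi_0}}$ by the uniqueness in Theorem~\ref{th:existencecor}, and since the limit is independent of the subsequence the whole sequence converges. Finally, quantifying the convergence $\xi_n \to \xi_0$ is not needed — plain sequential continuity suffices for the statement — and the conclusion about Theorem~\ref{th:homog_qualititative} follows because its proof of \eqref{eq:cvforte} only ever uses $\Phi_p$ through the continuity of $\xi \mapsto \nabla w_\xi$ together with $M_\varepsilon \nabla u^* \to \nabla u^*$.
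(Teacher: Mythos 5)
The local half of your argument is essentially the paper's: uniform energy bound, weak $L^p(\RR^d)$ limit, strong $L^p_{loc}$ convergence of the gradients by a localized monotonicity argument (Lemma~\ref{lem:passaglimite}), and identification of the limit through the uniqueness of Theorem~\ref{th:existencecor} --- where \textbf{(A4)'} genuinely enters, via Lemma~\ref{lem:formfaible}, to upgrade the distributional limit equation to a solution in the sense of Definition~\ref{def:def}. The gap is in your treatment of infinity. You reduce everything to \emph{global} strong convergence of $\nabla\widetilde{w_{\xi_n}}$ in $L^p(\RR^d)$ and claim it follows from a tail estimate $\int_{|x|>R}|\nabla\widetilde{w_{\xi_n}}|^p\to 0$ as $R\to\infty$ \emph{uniformly in $n$}, obtained from a weighted Caccioppoli inequality. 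That step is only asserted, and it is exactly where the difficulty sits: the a priori bound gives no decay of the exterior energies, so one would need a hole-filling iteration whose constants involve the weighted Poincar\'e constant for the weights $|\xi_n+\nabla w_{\xi_n}^{\per}|^{p-2}$; but \textbf{(A4)'} provides that constant for each fixed $\xi$ separately, with no uniformity along the sequence $(\xi_n)$, and uniform convergence of the weights (Proposition~\ref{prop:periodique} (iv)) does not transfer the inequality with a uniform constant because the weights may degenerate. Moreover you are proving more than the statement requires: continuity is only claimed into $L^p_{unif}(\RR^d)$.

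The paper avoids any uniform tail estimate by a concentration-compactness contradiction. If $L^p_{unif}$ convergence failed, then by the already-established $L^p_{loc}$ convergence it would fail on balls $B(x_n,1)$ with $|x_n|\to\infty$. Translating by $x_n$, the shifted correctors solve equations whose defect $\widetilde a(\cdot+x_n)$ and right-hand side $h(\cdot+x_n)$ vanish locally as $n\to\infty$ (this is where \textbf{(A3)} is used a second time), so Lemma~\ref{lem:passaglimite} applies again and the translated limit solves the \emph{homogeneous periodic} equation; Lemma~\ref{lem:formfaible} and the monotonicity inequality~\eqref{eq:useful_ineq_1} then force this limit to vanish, contradicting the assumed lower bound $\delta$. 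To repair your proof you must either supply the uniform-in-$n$ exterior estimate (which requires an assumption stronger than \textbf{(A4)'} as stated), or replace step (b) by this translation argument.
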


We close this section by mentioning that the results of Theorem~\ref{th:homog_qualititative} can be improved in the one-dimensional setting. We devote Section~\ref{sect:1D} to convergence results in this particular case.

\begin{remarque}
To prove Theorems~\ref{th:homog_qualititative} and~\ref{th:cont}, Assumption~\textbf{(A4)'} can further be weakened into the following one: the set of smooth functions with compact support over $\R^d$, denoted by $\mathcal{C}_0^{\infty}(\RR^d)$, is dense in $W_{\xi + \nabla w_{\xi}^{\per}}$. We show in Lemma~\ref{lem:dense} (see Appendix~\ref{sect:ineq}) that, as pointed out in~\cite{zhikov1998weighted}, the density result is implied by Assumption~\textbf{(A4)'}. Note that, under Assumption~\textbf{(A4)'}, we can easily prove (by density) that \eqref{eq:th2.2_gen2}--\eqref{eq:f_2} admits a unique solution in the distribution sense in $W_{\xi + \nabla w_{\xi}^{\per}}$. 
\end{remarque}

\begin{remarque}
The method of proof of this paper allows to build the non-periodic correctors for a defect $\widetilde{a}$ that belongs to the dual space of $W_{\xi+\nabla w_{\xi}^{\per}}$, see Lemma~\ref{lem:banach} (iii). This is in particular the case if $\widetilde{a} \in L^2(\R^d)$. We are however not able to show that the non-periodic corrector satisfies $\nabla \widetilde{w_{\xi}} \in L^2(\R^d)$ but only that $\nabla \widetilde{w_{\xi}} \in L^2(|\xi + \nabla w_{\xi}^{\per}|^{p-2}d\lambda)$, see Remark~\ref{rem:2.9} below. More generally, building the non-periodic correctors for a defect $\widetilde{a} \in L^{2+\delta} \cap \holder(\R^d)$ is a challenging problem that we are unable to address for now. In the linear setting $p=2$, this was achieved in~\cite{BLLfutur1} by studying the continuity from $L^q(\R^d)$ to $L^q(\R^d)$ for $q > 2$ of the Riesz operator associated to the coefficient $a$.
\end{remarque}

\begin{remarque}
The space $W_{\xi + \nabla w_{\xi}^{\per}}$ is in general different from the space $\overset{\circ}{W^{1,p}} \cap \overset{\circ}{H^1}(\R^d)$, where $\overset{\circ}{W^{1,p}}$ and $\overset{\circ}{H^1}(\R^d)$ are the standard homogeneous Sobolev spaces, unless $\xi + \nabla w_{\xi}^{\per}$ does not vanish. Assume that there exists $x_0 \in Q$ such that $\xi + \nabla w_{\xi}^{\per}(x_0) = 0$. We can assume by invariance translation that $x_0 = 0$. Owing to Proposition~\ref{prop:periodique} (ii), we have that $|\xi + \nabla w_{\xi}^{\per}(x)| \leq C|x|^{\alpha}$ in $Q$. Let $\phi \in \mathcal{D}(Q)$ be such that $\phi = 1$ on $B(0,1/4)$. We define $\Psi := \sum_{k \in \ZZ^d\setminus \{0\}} \frac{1}{|k|^{\delta + \nu}}\phi(|k|^{\nu}(\cdot - k))$, where $\delta, \nu > 0$ will be chosen later. We have 
\begin{equation}
\big\| \nabla \Psi \big\|_{L^p(\R^d)}^p = \sum_{k \in \ZZ^d\setminus \{0\}} \frac{1}{|k|^{p\delta}} \int_{|k|^{-\nu}Q} \big|\nabla \phi(|k|^{\nu}x)\big|^p \dint x = \sum_{k \in \ZZ^d\setminus \{0\}} \frac{\|\nabla\phi\|_{L^p(Q)}^p}{|k|^{ p\delta+d\nu}}.
\label{rk2.9_1}
\end{equation}
Besides, we have that 
\begin{equation}
\begin{aligned}
\big\| \nabla \Psi \big\|_{L^2(|\xi + \nabla w_{\xi}^{\per}|^{p-2}d\lambda)}^2 = \sum_{k \in \ZZ^d\setminus \{0\}} \frac{1}{|k|^{2\delta}} \int_{|k|^{-\nu}Q} \big|\nabla\phi(|k|^{\nu}x)\big|^2 & |\xi + \nabla w_{\xi}^{\per}(x)|^{p-2} \dint x \\&\leq \sum_{k \in \ZZ^d\setminus \{0\}} \frac{C\|\nabla\phi\|_{L^2(Q)}^2}{|k|^{\alpha\nu(p-2) + d\nu + 2\delta}}.
\end{aligned}
\label{rk2.9_2}
\end{equation}
Finally, 
\begin{equation}
\big\| \nabla \Psi \big\|_{L^2(\R^d)}^2 = \sum_{k \in \ZZ^d\setminus \{0\}} \frac{1}{|k|^{2\delta}} \int_{|k|^{-\nu}Q} \big|\nabla\phi(|k|^{\nu}x)\big|^2  = \sum_{k \in \ZZ^d\setminus \{0\}} \frac{\|\nabla \phi \|_{L^2(Q)}^2}{|k|^{d\nu + 2\delta}}.
\label{rk2.9_3}
\end{equation}
We fix $\nu \in (\frac{d}{d+2},1)$ and $\delta \in \big(\max\{\frac{d(1-\nu)}{p},\frac{d(1-\nu) - \alpha\nu(p-2)}{2} \} ,\frac{d(1-\nu)}{2} \big)$ so that $\nabla \Psi \in L^2(|\xi + \nabla w_{\xi}^{\per}|^{p-2} d \lambda) \cap L^p(\R^d)$ and $\nabla \Psi \notin L^2(\R^d)$. Note that $\nabla \Psi \in \mathcal{C}^{0,\delta/\nu}(\R^d)$ so that this counter-example is consistent with the result of Theorem~\ref{th:th_nonlin} (ii) since $\delta < \nu$.
\label{rem:2.9}
\end{remarque}


\subsection{Comments on the Assumptions}
\label{sect:comments}

\paragraph{On Assumption \textbf{(A4)}.} Assumption \textbf{(A4)} is quite restrictive but is known to be true in dimension~1. Besides, it is proved in~\cite[Lemma~2, p. 404]{cherednichenko2004full} that it is also satisfied in dimension $d=2$.

\medskip

 We show here that Assumption \textbf{(A4)} is satisfied for laminate materials (in any dimension). Suppose that $a^{\per}(x) = a_0(x_1)$ where $a_0: \R \longrightarrow \R$ is a periodic function. Let $\xi \neq 0$. In this case, the periodic corrector $w_{\xi}^{\per}$ is a function of the first variable \textit{i.e.} $w_{\xi}^{\per}(x) = w_{\xi}^0(x_1)$ and~\eqref{eq:cor_per_2} becomes
\begin{equation}
- \frac{d}{dx_1}\left(a_0(x_1)\left(\xi_1 + \frac{dw_{\xi}^0}{dx_1} \right)\left|\left(\xi_1+\frac{dw_{\xi}^0}{dx_1} \right)^2+ \xi_2^2 + \cdots + \xi_d^2\right|^{\frac{p-2}{2}} \right) = 0.
\label{eq:laminate}
\end{equation}
If there exists $i \geq 2$ such that $\xi_i \neq 0$, then $|\xi + \nabla w_{\xi}^{\per}| \geq |\xi_i| > 0$. In the other cases, $\xi_i = 0$ for $i \geq 2$, thus $\xi_1\neq 0$ and~\eqref{eq:laminate} reduces to:
\begin{equation}
- \frac{d}{dx_1}\left(a_0(x_1)\left(\xi_1 + \frac{dw_{\xi}^0}{dx_1} \right)\left|\xi_1+\frac{dw_{\xi}^0}{dx_1} \right|^{p-2} \right) = 0.
\label{eq:laminate2}
\end{equation}
There exists a constant $C(\xi)$ such that
 $\big(\xi_1 + \frac{dw_{\xi}^0}{dx_1}\big)^{p-1} = C(\xi)/a_0(x_1)$, where $z^{p-1}:=\text{sgn}(z)|z|^{p-1}$. If $C(\xi) = 0$, then $w_{\xi}^0(x_1) = - \xi_1 x_1$ which contradicts the periodicity of~$w_{\xi}^0$. In any cases, we have shown that $|\xi + \nabla w_{\xi}^{\per}| > 0$. We then prove easily that this implies~\textbf{(A4)}.

\paragraph{On Assumption \textbf{(A4)'.}} This Assumption is satisfied in dimension $d=1,2$ because \textbf{(A4)} is satisfied. For higher dimensions, we provide here a sufficient condition implying \textbf{(A4)'}:

\begin{lemme}[see \cite{cardone2013estimates} and~\cite{zhikov2008homogenization}]
Assume that $d \geq 2$ and that $|\xi + \nabla w_{\xi}^{\per}|^{2-p} \in L^{d/2}(Q)$, then \textbf{(A4)'} is satisfied. 
\label{lem:cardone}
\end{lemme}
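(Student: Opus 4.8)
The plan is to reduce the weighted Poincaré–Wirtinger inequality \eqref{eq:A4_prime} on the annulus $Q\setminus Q_{1/2}$ to the standard (unweighted) Poincaré–Wirtinger inequality there, by using Hölder's inequality twice — once in the form of a reverse-type estimate to introduce the weight $m_R := |\xi+\nabla w_{\xi}^{\per}(R\cdot)|^{p-2}$, and once to control the unweighted $H^1$ seminorm by the weighted one. Write $\rho := p-2 \geq 0$. The left-hand side is $\|m_R^{1/2}(w - \langle w\rangle)\|_{L^2}$, where $\langle w\rangle$ denotes the average over the annulus. First I would handle the case $p=2$ trivially (then $m_R\equiv 1$ and the statement is the classical Poincaré–Wirtinger inequality on the Lipschitz domain $Q\setminus Q_{1/2}$), so assume $p>2$.

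The key scaling observation is that $m_R(x) = |\xi+\nabla w_{\xi}^{\per}(Rx)|^{p-2}$, and by Proposition~\ref{prop:periodique}(ii), $\nabla w_\xi^{\per}$ is bounded, so $m_R \in L^\infty$ uniformly in $R$; more importantly, by periodicity and a change of variables, $\int_{Q\setminus Q_{1/2}} m_R(x)^{-d/2}\,dx = \int_{Q\setminus Q_{1/2}} |\xi + \nabla w_\xi^{\per}(Rx)|^{(2-p)d/2}\,dx$, which, for $R \geq 1$ an integer (or after covering the annulus by $O(R^d)$ unit cells and rescaling), is bounded by $C\||\xi+\nabla w_\xi^{\per}|^{2-p}\|_{L^{d/2}(Q)}^{d/2}$ uniformly in $R$; for general $R > r_{min}$ one absorbs the non-integer part into the constant. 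Thus the hypothesis $|\xi+\nabla w_\xi^{\per}|^{2-p} \in L^{d/2}(Q)$ gives a uniform-in-$R$ bound $\|m_R^{-1}\|_{L^{d/2}(Q\setminus Q_{1/2})} \leq C$. Now apply Hölder with exponents $(d/2, d/(d-2))$ (for $d > 2$; for $d=2$ use any exponent pair, and for the borderline just take $L^\infty$ against $L^1$):
\[
\|w - \langle w\rangle\|_{L^2}^2 = \int m_R^{-1}\, m_R\, |w-\langle w\rangle|^2 \leq \|m_R^{-1}\|_{L^{d/2}} \,\big\|m_R^{1/2}(w-\langle w\rangle)\big\|_{L^{2d/(d-2)}}^2.
\]
This is not quite what I want — it bounds the unweighted norm by a higher-integrability weighted norm. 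Instead I would run the argument in the other direction: the standard Poincaré–Wirtinger inequality gives $\|w-\langle w\rangle\|_{L^{2^*}(Q\setminus Q_{1/2})} \leq C\|\nabla w\|_{L^2}$ with $2^* = 2d/(d-2)$ (Sobolev–Poincaré), and then $\|\nabla w\|_{L^2}^2 = \int m_R^{-1} m_R |\nabla w|^2 \leq \|m_R^{-1}\|_{L^{d/2}}\|m_R^{1/2}\nabla w\|_{L^{2^*}}^2$ — still the wrong integrability on the right.

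The correct route is: by the classical Sobolev–Poincaré inequality on the Lipschitz domain $Q \setminus Q_{1/2}$, $\|w - \langle w\rangle\|_{L^{2^*}} \leq C\|\nabla w\|_{L^2}$; combine with Hölder $\|\nabla w\|_{L^2}^2 = \int m_R^{-1}(m_R|\nabla w|^2) \le \|m_R^{-1}\|_{L^{d/(d-2+\epsilon)}}\|m_R|\nabla w|^2\|_{L^{d/(2-\epsilon)}}$ — no. Let me state it cleanly, which is what I would actually write: since $m_R$ is bounded below away from $0$ on the region $\{|\xi+\nabla w_\xi^{\per}(R\cdot)|\geq \delta\}$ and the hypothesis controls the measure of the complement, one interpolates. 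Precisely, following Zhikov and Cardone–Corbo Esposito–Zhikov, one proves: $\|m_R^{1/2} u\|_{L^2} \le \|m_R^{-1}\|_{L^{d/2}}^{1/2}\|u\|_{L^{2^*}}$ applied to $u = w-\langle w\rangle$, then $\|w-\langle w\rangle\|_{L^{2^*}} \le C\|\nabla w\|_{L^2} \le C\|m_R^{-1}\|_{L^{d/2}}^{1/2}\|m_R^{1/2}\nabla w\|_{L^{2^*}}$; to close, note $2^* > 2$ so by Hölder again $\|m_R^{1/2}\nabla w\|_{L^{2^*}}$ is controlled by $\|m_R^{1/2}\nabla w\|_{L^2}$ only if the domain has finite measure and $m_R^{1/2}$ has the needed extra integrability — which follows from $m_R \in L^\infty$ uniformly. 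I expect the main obstacle to be precisely this bookkeeping of exponents, i.e.\ verifying that the single hypothesis $|\xi+\nabla w_\xi^{\per}|^{2-p}\in L^{d/2}(Q)$ together with $m_R\in L^\infty$ suffices and, crucially, obtaining the constant uniformly in $R > r_{min}$ (the $R$-dependence is handled by periodicity and the scaling $x\mapsto Rx$, but one must track how the annulus $Q\setminus Q_{1/2}$ interacts with the period lattice). This is carried out in~\cite{cardone2013estimates} and~\cite{zhikov2008homogenization}, and I would follow those references for the delicate part, citing them explicitly.
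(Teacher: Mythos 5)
The paper does not prove this lemma itself: its entire proof is the one-line citation to \cite[Lemma 8]{cardone2013estimates}, so deferring the details to the references is consistent with what the author does. However, the explicit chain of inequalities you end up writing contains a genuine gap at its last step, and the obstacle you flag there is not mere ``bookkeeping'' --- it is fatal to that particular chain. After $\|m_R^{1/2}u\|_{L^2}\le \|m_R\|_{L^{d/2}}^{1/2}\|u\|_{L^{2^*}}$ (note: this Hölder step requires $\|m_R\|_{L^{d/2}}$, not $\|m_R^{-1}\|_{L^{d/2}}$ as you wrote, though this is harmless since $m_R\in L^\infty$ on a bounded set) and $\|u\|_{L^{2^*}}\le C\|\nabla w\|_{L^2}\le C\|m_R^{-1}\|_{L^{d/2}}^{1/2}\|m_R^{1/2}\nabla w\|_{L^{2^*}}$, you are left needing to control $\|m_R^{1/2}\nabla w\|_{L^{2^*}}$ by $\|m_R^{1/2}\nabla w\|_{L^{2}}$ with $2^*>2$. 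On a bounded domain the inequality between these norms goes the other way, and no amount of extra integrability of $m_R$ helps: taking $m_R\equiv 1$ the claim would read $\|\nabla w\|_{L^{2^*}}\le C\|\nabla w\|_{L^2}$ for every $w\in H^1$, which is false. So the argument as written does not close.

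The missing idea is to use the Sobolev--Poincar\'e inequality in the \emph{subcritical} direction for the gradient, i.e. $W^{1,q}\hookrightarrow L^2$ with $q=\frac{2d}{d+2}<2$, rather than $H^1\hookrightarrow L^{2^*}$. Concretely, with $u=w-\fint w$ and $m_R$ as you define it: first $\int m_R|u|^2\le \|m_R\|_{L^\infty}\|u\|_{L^2}^2$ (uniform in $R$ by Proposition~\ref{prop:periodique}(ii)); then $\|u\|_{L^2}\le C\|\nabla w\|_{L^{2d/(d+2)}}$ on the Lipschitz annulus; finally Hölder with exponents $\frac{d+2}{2}$ and $\frac{d+2}{d}$ applied to $|\nabla w|^{\frac{2d}{d+2}}=m_R^{-\frac{d}{d+2}}\bigl(m_R|\nabla w|^2\bigr)^{\frac{d}{d+2}}$ gives
\begin{equation*}
\|\nabla w\|_{L^{2d/(d+2)}}^{2}\le \Bigl(\int m_R^{-d/2}\Bigr)^{2/d}\int m_R|\nabla w|^2=\|m_R^{-1}\|_{L^{d/2}}\int m_R|\nabla w|^2,
\end{equation*}
which is exactly where the hypothesis $|\xi+\nabla w_{\xi}^{\per}|^{2-p}\in L^{d/2}(Q)$ enters (your uniform-in-$R$ bound on $\|m_R^{-1}\|_{L^{d/2}}$ via periodicity and covering is correct and is the right way to handle the $R$-dependence). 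Chaining these three estimates closes the proof; the rest of your preparation (the trivial case $p=2$, the $L^\infty$ bound on $m_R$, the treatment of $d=2$ where $q=1$) is fine.
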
 

\begin{proof}
We refer to~\cite[Lemma 8]{cardone2013estimates}.
\end{proof} 

If we assume that $\{\xi + \nabla w_{\xi}^{\per } = 0 \}$ is a finite number of points (in the case $d>2$) and that all critical points have finite order, denoting by~$m$ the maximum order of the corresponding zero points, we have that $|\xi + \nabla w_{\xi}^{\per}|^{2-p} \in L^{d/2}(Q)$ if and only if $m \frac{d}{2}(p-2) < d$ i.e. $p < 2 + 2/m$. Thus, in this case, Assumption~\textbf{(A4)'} can be replaced by assuming that $p < 2+2/m$. Note also that if $\xi + \nabla w_{\xi}^{\per}$ vanishes at order $m$ along a line (or a curve) in dimension $d$, then $|\xi + \nabla w_{\xi}^{\per}|^{2-p} \sim |x|^{m(2-p)}$ which is $L^{d/2}(Q)$ if and only if $\frac{d}{2} m(p-2) < d-1$ \textit{i.e.} $p < 2 + \frac{2(d-1)}{dm}$.


\begin{remarque}
The Assumption~\textbf{(A4)'} is used in the proof of Lemma~\ref{lem:formfaible} which allows to pass from solutions in the distribution sense to solutions in the sense of Definition~\ref{def:def} for PDEs of the form~\eqref{eq:th2.2_gen2}. We then take advantage of Lemma~\ref{lem:formfaible} in the proof of Theorem~\ref{th:cont} by working locally in a concentration-compactness method. 
\label{re:A4_prime}
\end{remarque}


\subsection{Extension to other non-linear operators}
\label{subsect:extension}

We have limited the presentation of the results to the simplest operator~\eqref{eq:nonlin} in order to avoid some technicalities and the use of abstract existence Theorems for non-linear PDEs. However, the result of this paper extends to more general operators. We explain below the type of problems that we can address with the technique developed in this work.

\medskip

The first direct extension concerns the equivalent of~\eqref{eq:nonlin} when $a$ is a matrix-valued coefficient. This corresponds to the following non-linear operator: 
\begin{equation}
a(y,\xi) := \langle A(y)\xi,\xi \rangle^{\frac{p-2}{2}}A(y)\xi, \quad y \in \RR^d, \quad \xi \in \RR^d,
\label{eq:matrice}
\end{equation}
where $A$ is of the form $A = A^{\per} + \widetilde{A}$. We assume that the matrix $A^{\per}$ is periodic and that $A$ and $A^{\per}$ are symmetric and positive definite, that is,
$$\exists \lambda > 0,\quad \forall y \in \RR^d, \quad \lambda^{-1} |\xi|^2 \leq \langle A(y)\xi,\xi \rangle \leq \lambda |\xi|^2 \quad\text{and} \quad \lambda^{-1} |\xi|^2 \leq \langle A^{\per}(y)\xi,\xi \rangle \leq \lambda |\xi|^2.$$
The perturbation $\widetilde{A}$ satisfies $\widetilde{A} \in L^{p'}\cap \mathcal{C}^{0,1}(\RR^d)^{d\times d}$. The periodic correctors can be defined thanks to variational techniques by considering the minimization problem
$$\min_{w_{\xi}^{\per} \in H^{1,\per}(Q)} \left\{ \frac{1}{p} \int_Q \left\langle A(y)(\xi + \nabla w_{\xi}^{\per}),\xi + \nabla w_{\xi}^{\per} \right\rangle^{p/2} \right\}.$$
The non-periodic equation corresponding to~\eqref{eq:th2.2_gen2} is
\begin{equation}
- \di \left[ a(\cdot,\xi +\nabla w_{\xi}^{\per} + \nabla \widetilde{w_{\xi}}) - a(\cdot,\xi + \nabla w_{\xi}^{\per}) \right] = \di(h),
\end{equation}
where
\begin{equation}
h := a^{\per}(\cdot,\xi + \nabla w_{\xi}^{\per}) - a(\cdot,\xi + \nabla w_{\xi}^{\per}),
\label{eq:f_gen}
\end{equation}
where $a^{\per}(\cdot,\xi) := \left\langle A^{\per}(\cdot)\xi,\xi \right\rangle^{\frac{p-2}{2}}A^{\per}(\cdot)\xi$. It is easily proved that $f \in L^{p'}(\R^d)^{d}$ and that the method of proof of Section~\ref{sect:cor} extends to this case by studying the functional
$$\begin{aligned}
F_{\xi}(v) & := \frac{1}{p}\int_{\R^d} \Big\{\left\langle A(y)(\xi + \nabla w_{\xi}^{\per} + \nabla v),\xi + \nabla w_{\xi}^{\per} + \nabla v \right\rangle^{p/2} - \left\langle A(y)(\xi + \nabla w_{\xi}^{\per}),\xi + \nabla w_{\xi}^{\per} \right\rangle^{p/2} \\ & - p \langle A(y)(\xi + \nabla w_{\xi}^{\per}),\xi + \nabla w_{\xi}^{\per} \rangle^{\frac{p-2}{2}}A(y)(\xi + \nabla w_{\xi}^{\per})\cdot \nabla v \Big\}\dint y + \int_{\R^d} h\cdot \nabla v.
\end{aligned}$$ Note that the inequalities given in Appendix~\ref{sect:ineq} are valid for the matrix model~\eqref{eq:matrice}. Concerning the continuity results for the application $\xi \longmapsto \nabla\widetilde{w_{\xi}}$, the results proved in sections~\ref{sect:propcor}, \ref{sect:qual} and \ref{sect:weakcont} still hold true.

\medskip

The second less direct extension corresponds to non-variational operators, that is, PDEs that cannot be written as a minimization problem. We consider operators $a(y,\xi)$ that satisfy the following properties:
\begin{enumerate}[label=(\arabic*)]
\item \label{prop} for all $\xi \in \RR^d$, $a(\cdot,\xi)$ is a measurable function and $\xi \longmapsto a(y,\cdot)$ for fixed $y \in \R^d$ is of class $\mathcal{C}^1(\RR^d)$ and of class $\mathcal{C}^2(\RR^d \setminus \{0\})$.
\item \label{prop2} the application $\xi \longmapsto a(y,\xi)$ is homogeneous \textit{i.e.} $a(y,t\xi) = t^{p-1} a(y,\xi)$ for $t \in \R$ and $y,\xi \in \R^d$. We also assume that $a(\cdot,\xi)$ is a uniformly in $\xi$ Lipschitz continuous function: there exists $\lambda > 0$ such that
$$\forall y,y' \in \RR^d, \quad \forall \xi \in \RR^d, \quad \left|a(y,\xi) - a(y',\xi) \right| \leq \lambda |y-y'||\xi|^{p-1}.$$
$$\forall y,y' \in \RR^d, \quad \forall \xi \in \RR^d, \quad \left|\partial_{\xi} a(y,\xi) - \partial_{\xi} a(y',\xi) \right| \leq \lambda |y-y'||\xi|^{p-2}.$$
\item \label{prop3} we have that $a(y,\xi) = a^{\per}(y,\xi) + \widetilde{a}(y,\xi)$ where $a^{\per}(\cdot,\xi)$ is a periodic function satisfying the same homogeneity and regularity properties as $a$. We assume that the perturbation $\widetilde{a}$ satisfies:
$$\exists b \in L^{p'}\cap L^{\infty}(\R^d), \quad \forall \xi \in \RR^d,\quad \forall y \in \R^d,\quad  \big|\widetilde{a}(y,\xi)\big| \leq  b(y)|\xi|^{p-1} \quad \text{and} \quad \big|\partial_{\xi} \widetilde{a}(y,\xi)\big| \leq  b(y)|\xi|^{p-2}.$$
\item \label{prop4} There exists $\lambda > 0$ such that
$$\left\{a(y,\xi) - a(y,\xi')\right\}\cdot\left\{\xi - \xi'\right\} \geq \lambda^{-1}\left(|\xi|^{p-2} + |\xi'|^{p-2} \right)|\xi - \xi'|^2,$$
$$\left\{a^{\per}(y,\xi) - a^{\per}(y,\xi')\right\}\cdot\left\{\xi - \xi'\right\} \geq \lambda^{-1}\left(|\xi|^{p-2} + |\xi'|^{p-2} \right)|\xi - \xi'|^2,$$
and 
$$\left| a(y,\xi) - a(y,\xi') \right| \leq \lambda \left(|\xi|^{p-2} + |\xi'|^{p-2} \right)|\xi - \xi'|$$
$$\left| a^{\per}(y,\xi) - a^{\per}(y,\xi') \right| \leq \lambda \left(|\xi|^{p-2} + |\xi'|^{p-2} \right)|\xi - \xi'|.$$
We also assume that 
\begin{equation}
\sup_{y \in \RR^d}\sup_{|\xi|=1} \left|\partial^2_{\xi} a(y,\xi) \right| \leq \lambda.
\label{eq:sup_derivee_seconde}
\end{equation}
\end{enumerate}
We define the operator 
\begin{equation}
A : \begin{cases}
\begin{aligned}
W_{\xi +\nabla w_{\xi}^{\per}} & \longrightarrow \left( W_{\xi +\nabla w_{\xi}^{\per}} \right)' \\
\nabla v & \longmapsto \begin{cases} \begin{aligned}
W_{\xi + \nabla w_{\xi}^{\per}} & \longrightarrow \RR \\
\nabla h & \longmapsto \int_{\RR^d} \left[ a(\cdot,\xi +\nabla w_{\xi}^{\per} + \nabla v) - a(\cdot,\xi + \nabla w_{\xi}^{\per})\right]\cdot \nabla h.
 \end{aligned}
\end{cases}
\end{aligned}
\end{cases}
\label{eq:Abis}
\end{equation}
We can show that $A$ is hemicontinuous, bounded, coercive and strictly monotone. By~\cite[Corollary~8.1]{le2018nonlinear}, the PDE $A(\nabla v) = \mathcal{F}$, where $\mathcal{F} := \di\ \widetilde{a}(\cdot,\xi + \nabla w_{\xi}^{\per})$, admits a unique solution in $W_{\xi+\nabla w_{\xi}^{\per}}$. The results of Section~\ref{sect:weakcont}, which are sufficient to prove the qualititative homogenization of Section~\ref{sect:qual} (which is in fact the main result of this paper), only use the PDE and are thus directly generalized. The results of Section~\ref{sect:propcor} can be proved using the PDE instead of the minimization problem~\eqref{eq:min}. These extensions are detailed in~\cite[Chapter 5]{wolfphd}.

\begin{remarque}
A simple example of a non-variational operator satisfting the above assumptions is $a(y,\xi) = A(y)\xi \left|\xi \right|^{p-2}$, where $A$ is a positive definite and bounded symmetric matrix that can be written under the form $A = A^{\per} + \widetilde{A}$ where $\widetilde{A} \in L^{p'}\cap \mathcal{C}^{0,1}(\RR^d)^{d \times d}$. We check that $a$ is not variational: assume by contradiction that there exists a function $F : \RR^d \times \RR^d \rightarrow \RR$ such that $a(y,\xi) = \partial_{\xi} F(y,\xi)$. In particular, thanks to Schwartz Theorem, we should have that for all $i,j \in \{1,...,d\}$,
$$\partial_{\xi_j} \left[a(y,\xi)_i\right]=\partial_{\xi_i} \left[a(y,\xi)_j \right].$$
Expanding each term gives, for $\xi \neq 0$,
$$A(i,j)|\xi|^{p-2} + (p-2)\left[A(y)\xi \right]_i \xi_j |\xi|^{p-4} =  A(j,i)|\xi|^{p-2} + (p-2)\left[A(y)\xi \right]_j \xi_i |\xi|^{p-4}$$
In particular, for all $\xi \neq 0$ and $(i,j) \in \{ 1,...,d \}^2$,
$$ \left[A(y)\xi \right]_i \xi_j  = \left[A(y)\xi \right]_j \xi_i.$$
This shows that $A$ is a scalar matrix \textit{i.e.} proportional to the identity.  
\end{remarque}

\begin{remarque}
Assumption~\eqref{eq:sup_derivee_seconde} is only needed in the proofs of Theorem~\ref{th:th_nonlin} and Theorem~\ref{th:coercivitypercor}. Note also that, together with homogeneity, this Assumption implies that for all $\delta > 0$,
$$\sup_{y \in \RR^d} \sup_{|\xi|=\delta} \left| \partial^2_{\xi} a(y,\xi) \right| \leq \lambda \delta^{p-3}.$$
\end{remarque}

%










\section{The one-dimensional setting}
\label{sect:1D}


We consider the homogenization of~\eqref{eq:nonlin} in the one-dimensional case. This equation reads as:
\begin{equation}
\begin{cases}
- \left(a(./\varepsilon) u_{\varepsilon}' |u_{\varepsilon}'|^{p-2} \right)' = f \\
u_{\varepsilon}(-\frac{1}{2}) = u_{\varepsilon}(\frac{1}{2}) = 0,
\end{cases}
\end{equation}
where $a$ is of the form $a = a^{\per} + \widetilde{a}$ with $\widetilde{a} \in L^q \cap\mathcal{C}^{0,\alpha} (\R)$, $1 < q <+\infty$ and $a$ satisfies Assumption~\textbf{(A1)}. In this section, we assume that $f \in L^{p'}(-\frac{1}{2},\frac{1}{2})$.
Direct computations show that 
\begin{equation}
u_{\varepsilon}' = \left(\frac{-F + C_{\varepsilon}}{a(./\varepsilon)} \right)^{1/(p-1)}, \quad F(x) = \int_{-\frac{1}{2}}^x f,
\label{eq:1D_1}
\end{equation}
where $x^{\frac{1}{p-1}} := \text{sgn}(x) |x|^{\frac{1}{p-1}}$ for $x \in \R$. The constant $C_{\varepsilon}$ is such that 
\begin{equation}
\int_{-\frac{1}{2}}^{\frac{1}{2}}  \left(\frac{-F + C_{\varepsilon}}{a(./\varepsilon)} \right)^{1/(p-1)} = 0.
\label{eq:1D_2}
\end{equation}
We note that the function $F$ is bounded and thus the sequence $(C_{\varepsilon})_{\varepsilon> 0}$ is bounded. Passing to the limit $\varepsilon \longrightarrow 0$ in \eqref{eq:1D_1} and \eqref{eq:1D_2}, we get that $u_{\varepsilon} \underset{\varepsilon \rightarrow 0}{\relbar\joinrel\rightharpoonup} u^*$ in $W^{1,p}(-\frac{1}{2},\frac{1}{2})$ and $C_{\varepsilon} \underset{\varepsilon\rightarrow 0}{\longrightarrow} C^*$, where 
$$(u^*)' = \left( \frac{-F + C^*}{a^*} \right)^{1/(p-1)}, \quad \int_{-\frac{1}{2}}^{\frac{1}{2}} \left(-F + C^*\right)^{1/(p-1)} = 0.$$
and the homogenized coefficient is defined by
$$a^* := \left(L^{p}-\underset{\varepsilon \rightarrow 0}{\text{weaklim}} \  a\big(\frac{\cdot}{\varepsilon}\big)^{-\frac{1}{p-1}}\right)^{-(p-1)}.$$
We easily show with the ingredients used in Remark~\ref{re:sublinear} below that
$$a^* = \left(\int_{-\frac{1}{2}}^{\frac{1}{2}} \frac{1}{\big(a^{\per}\big)^{\frac{1}{p-1}}}\right)^{-(p-1)}.$$
The homogenized equation solved by $u^*$ is
$$
\begin{cases}
-\left( a^* (u^*)'|(u^*)'|^{p-2} \right)'  = f \\
u^*\big(-\frac{1}{2} \big) = u^*\big(\frac{1}{2}\big)  = 0.
\end{cases}
$$
The corrector equations~\eqref{eq:cornonlin} and~\eqref{eq:th2.2_gen2}-\eqref{eq:f_2} in the direction $\xi \in \R$
are easy to solve (see Remark~\ref{re:sublinear} below):
\begin{equation}
\xi + w'_{\xi} = \xi\left( \frac{a^*}{a} \right)^{\frac{1}{p-1}} \quad \text{and} \quad \xi + (w_{\xi}^{\per})' = \xi\left( \frac{a^*}{a^{\per}} \right)^{\frac{1}{p-1}}.
\label{eq:1Dcor}
\end{equation}
Let
$
R_{\varepsilon} := u_{\varepsilon} - (u^*)' - w_{(u^*)'}(./\varepsilon).
$ be the remainder between $u_{\varepsilon}$ and its two scale expansion.
When $u^*$ is regular enough, we have that
\begin{equation}
\begin{aligned}
R'_{\varepsilon} & = (u_{\varepsilon}^1)' - (u^*)'(1 + w'(./\varepsilon)) - \varepsilon w(./\varepsilon)(u^*)'' \\ & = \underbrace{\frac{(-F + C_{\varepsilon})^{1/(p-1)} - (-F + C^*)^{1/(p-1)}}{a(./\varepsilon)^{1/(p-1)}}}_{=: (u_{\varepsilon}^1)'} - \varepsilon w(./\varepsilon)(u^*)'',
\end{aligned}
\label{eq:two-scale1D}
\end{equation}
where $w := w_1$.
We concentrate in the sequel on the first term of~\eqref{eq:two-scale1D}, the second one being related to the regularity of $u^*$ on the one hand (which is not related to homogenization) and to the sublinearity of $w$ on the other hand. We prove briefly that $w$ is sublinear: indeed, we can write $w'= (w^{\per})' + \widetilde{w}'$ where, due to~Remark~\ref{re:optimal} below, $\widetilde{w}' \in L^q(\R^d)$. By H\"{o}lder (or Morrey) inequality, we get immediately that $\widetilde{w}$ is sublinear. Since $w^{\per}$ is periodic and bounded, it is in particular also sublinear. This proves that $w$ is sublinear.
We use Lemma~\ref{lem:1D_estim} stated in Appendix~\ref{sect:ineq} to obtain the bound
\begin{equation}
\left| u_{\varepsilon}' - (u^*)'(1 + w'(./\varepsilon)) \right| \leq \lambda |C_{\varepsilon} - C^*|^{1/(p-1)} \underset{\varepsilon \rightarrow 0}{\longrightarrow} 0 \quad \text{uniformly}.
\end{equation}
We have obtained the $L^{\infty}-$strong convergence of $(u_{\varepsilon}^1)'$ to zero when we use the non-periodic corrector. Let us now introduce the "periodic" remainder $R_{\varepsilon}^{\per}$ which is defined by $
R_{\varepsilon}^{\per} := u_{\varepsilon} - u^* - \varepsilon w_{(u^*)'}^{\per}(./\varepsilon).
$
We have that
\begin{equation}
\begin{aligned}
(R^{\per}_{\varepsilon})'& = \frac{(-F + C_{\varepsilon})^{1/(p-1)}}{a(./\varepsilon)^{1/(p-1)}} -  \frac{(-F + C^*)^{1/(p-1)}}{a^{\per}(./\varepsilon)^{1/(p-1)}} + \varepsilon w^{\per}_1(./\varepsilon) (u^*)'' \\
 &= (u_{\varepsilon}^1)' + (-F + C^*)^{1/(p-1)} \left[ \frac{1}{a(./\varepsilon)^{1/(p-1)}} - \frac{1}{a^{\per}(./\varepsilon)^{1/(p-1)}} \right] + \varepsilon w^{\per}_1(./\varepsilon) (u^*)''.
\end{aligned}
\label{eq:microscale}
\end{equation}
The first term tends uniformly to zero while the second one does not tend to zero in $L^{\infty}$ unless $\widetilde{a} = 0$ or $C^* = 0$. Indeed, testing~\eqref{eq:microscale} at the microscale gives:
$$
\left|(-F + C^*)^{1/(p-1)} \left[ \frac{1}{a(./\varepsilon)^{1/(p-1)}} - \frac{1}{a^{\per}(./\varepsilon)^{1/(p-1)}} \right](\varepsilon x)\right| \geq \underbrace{c(p,\lambda)|-F(\varepsilon x) + C^*|^{1/(p-1)} |\widetilde{a}(x)|}_{\underset{\varepsilon \rightarrow 0}{\longrightarrow} c(p,\lambda)|C^*|^{1/(p-1)}|\widetilde{a}(x)| \neq 0}.
$$
 This shows that the convergence of the remainder deteriorates when using $w_{\xi}^{\per}$ instead of $w_{\xi}$.
We close this section by commenting on the integrability of the correctors in the particular 1D setting. We show in Remark~\ref{re:optimal} that, in this case, the exponent given by Theorem~\ref{th:coercivitypercor} is optimal for $q=p'$, see also~Remark~\ref{rem:th2.5}.
\begin{remarque} Suppose that $\widetilde{a} \in L^q(\R^d) \cap \holder(\R^d)$, $1 < q < +\infty$.
An explicit calculation shows that 
\begin{equation}
\widetilde{w_{\xi}}' = - \left(\xi + (w_{\xi}^{\per})' \right)  + \left(\xi + (w_{\xi}^{\per})' \right)\left( 1 - \frac{\widetilde{a}}{a} \right)^{\frac{1}{p-1}},
\end{equation}
and
$|\xi +  (w_{\xi}^{\per})'| \geq c|\xi|$.
Since $\widetilde{a}(x) \underset{|x| \longrightarrow +\infty}{\longrightarrow} 0$, we have that
$$\widetilde{w_{\xi}}' \underset{x \rightarrow \pm \infty}{\sim} -\frac{1}{p-1} \frac{\widetilde{a}(\xi + (w_{\xi}^{\per})')}{a}.$$
Thus $\widetilde{w_{\xi}}' \in L^q(\R^d)$, that is $\widetilde{w_{\xi}}'$ has the same integrability as $\widetilde{a}$ and this exponent is optimal.
\label{re:optimal}
\end{remarque}

\begin{remarque}
We show below that there exists a unique solution $w_{\xi}$ to~\eqref{eq:cornonlin} that is sublinear at infinity. This justifies, in dimension one, to search $w_{\xi}$ under the form $w_{\xi}^{\per} + \widetilde{w_{\xi}}$ where $\widetilde{w_{\xi}}' \in L^p(\RR)$. 

\medskip

Assume that $w_{\xi}$ is a sublinear solution to~\eqref{eq:1Dcor}. Then, there exists a constant $C$ such that $\xi + w_{\xi}' = \left(C/a\right)^{1/(p-1)}$. We have by sublinearity that
$$\xi = \lim_{x \rightarrow +\infty} \fint_{0}^x \left(\xi + w_{\xi}'\right) =  \lim_{x \rightarrow +\infty} \fint_0^x \left(\frac{C}{a}\right)^{\frac{1}{p-1}} = C^{\frac{1}{p-1}}  \lim_{x \rightarrow +\infty} \fint_0^x \left(\frac{1}{a}\right)^{\frac{1}{p-1}}.$$
However, by Lemma~\ref{lem:1D_estim}, we have that 
$$\left|\fint_0^x \left(\frac{1}{a}\right)^{\frac{1}{p-1}} - \fint_0^x \left(\frac{1}{a^{\per}}\right)^{\frac{1}{p-1}} \right| \leq \fint_0^x \frac{\left| a^{\frac{1}{p-1}} - \left(a^{\per}\right)^{\frac{1}{p-1}}\right|}{a^{\frac{1}{p-1}}\left(a^{\per}\right)^{\frac{1}{p-1}}} \leq \mathrm{Cst.} \ \fint_0^x \left|\widetilde{a}\right|^{\frac{1}{p-1}},$$
where $\mathrm{Cst.}$ denotes a constant depending only on $p$ and $\lambda$. Since $\widetilde{a} \in L^{p'}(\RR^d)$, we get by H\"{o}lder inequality that $$\fint_0^x \left|\widetilde{a}\right|^{\frac{1}{p-1}} \underset{x\rightarrow +\infty}{\longrightarrow} 0.$$
This shows that 
$$\lim_{x \rightarrow +\infty} \fint_0^x \left(\frac{1}{a}\right)^{\frac{1}{p-1}} = \lim_{x \rightarrow +\infty} \fint_0^x \left(\frac{1}{a^{\per}}\right)^{\frac{1}{p-1}} = \left(\frac{1}{a^*} \right)^{\frac{1}{p-1}}$$
and gives that $C = \xi |\xi|^{p-2} a^*$. This shows that $\widetilde{w_{\xi}}$ is  necessarily of the form~\eqref{eq:1Dcor}.
\label{re:sublinear}
\end{remarque}

\paragraph*{Numerical experiments.} We have implemented for $p=3$ the solution to~\eqref{eq:nonlin} in the 1D setting for $f(x) = 2x$ and $$a(y) := a^{\per}(y) + \widetilde{a}(y) = 2+\cos(2\pi y) + 10e^{-|y|}$$ on the domain $\Omega:=(-\frac{1}{2},\frac{1}{2})$. The boundary conditions are homogeneous Dirichlet conditions \textit{i.e.} $u_{\varepsilon}(-\frac{1}{2}) = u_{\varepsilon}(\frac{1}{2}) = 0$. The coefficient $a$ satisfies of course Assumptions \textbf{(A1)-(A3)}. The results are plotted on Figure~\ref{fig}. We comment on these results. We have plotted for different values of $\varepsilon$ the function $u_{\varepsilon}'$ (which is labeled as '\texttt{exact solution}'), the periodic two scale approximation $(u^*)' + (w^{\per})'(./\varepsilon)(u^*)'$ (which is labeled as '\texttt{periodic two-scale approx.}') and the non-periodic two scale approximation $(u^*)' + w'(./\varepsilon)(u^*)'$ (which is labeled as '\texttt{non-periodic two-scale approx.}'). Tables~\ref{fig:tableaunum_1} and~\ref{fig:tableaunum_2} give numerical values for the periodic and non-periodic remainders in $L^2$ and $L^{\infty}-$norm for different values of $\varepsilon$. We see that on Figure~\ref{fig}, qualitatively, the non-periodic two-scale approximation fits efficiently the exact solution for each chosen value of $\varepsilon$. The periodic two-scale approximation corresponds to the exact solution far from the defect, which, as $\varepsilon \longrightarrow 0$, concentrates aroung the origin. We notice that the non-periodic corrector is useful to reconstruct the oscillations of the exact solution locally around the defect. Tables~\ref{fig:tableaunum_1} and~\ref{fig:tableaunum_2} express the same idea: the $L^{\infty}-$norms of the periodic remainders remain unchanged as~$\varepsilon$ decreases whereas those of the non-periodic remainder decrase with~$\varepsilon$. For the $L^2-$norm, which is weaker than the $L^{\infty}-$norm, both norms decrease as $\varepsilon$ gets closer to zero although the nonperiodic approximation is more accurate than the periodic approximation. This means that, depending on the precision we want (and also on the regularity on $f$ and $a$), we may use the periodic corrector, which is much easier to compute, or the non-periodic corrector, if we seek for a fine approximation of the exact solution. This can also be seen theoretically since $R_{\varepsilon}^{\per} - R_{\varepsilon} = \varepsilon \widetilde{w'}(./\varepsilon) (u^*)'$ and, for all $q \leq p$,
$$\big\|\varepsilon \widetilde{w}'(./\varepsilon)(u^*)' \big\|_{L^q(0,1)} \leq C\varepsilon^{d/p} \|(u^*)'\|_{L^{\infty}(0,1)} \|\widetilde{w}' \|_{L^p(\R)}.$$
In any case, we get that $R_{\varepsilon}^{\per} - R_{\varepsilon} \underset{\varepsilon\rightarrow 0}{\longrightarrow} 0$ in $L^q-$norm, $q \leq p$ but not in $L^{\infty}-$norm. Another way to reformulate the preceding remark is the following: the non-periodic corrector provides a better approximation at the microscale.
\begin{figure}[h!]
\centering
\begin{subfigure}{.5\textwidth}
\includegraphics[scale=0.5]{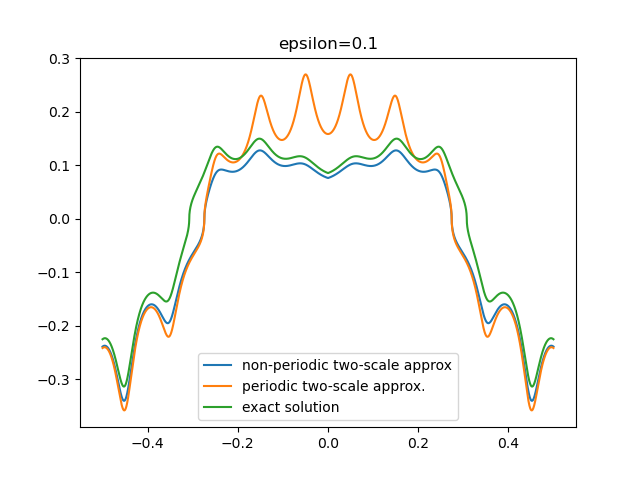}
\end{subfigure}%
\begin{subfigure}{.5\textwidth}
\includegraphics[scale=0.5]{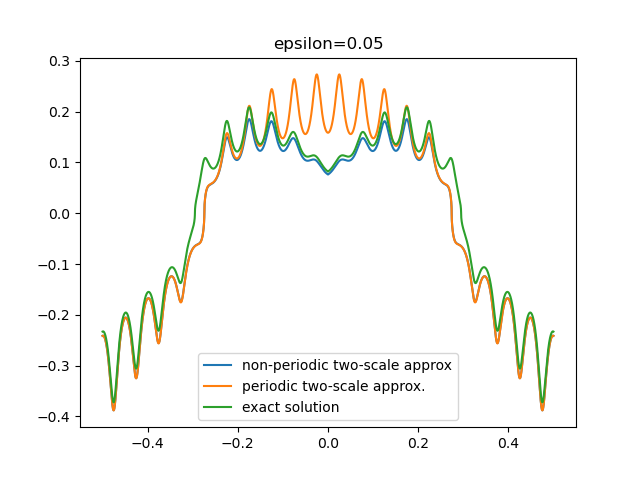}
\end{subfigure} 
\begin{subfigure}{.5\textwidth}
\includegraphics[scale=0.5]{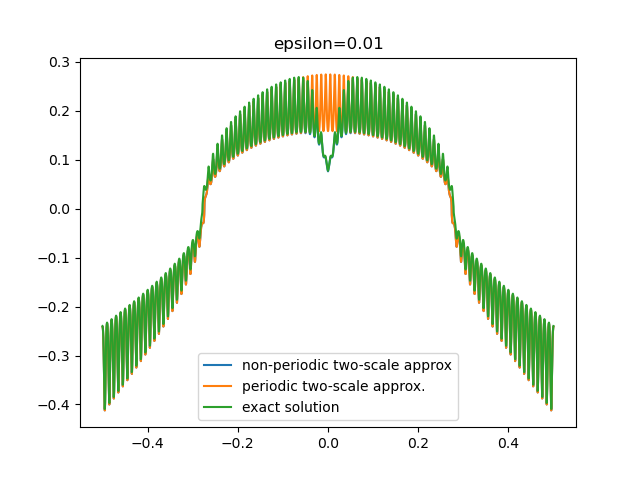}
\end{subfigure}%
\begin{subfigure}{.5\textwidth}
\includegraphics[scale=0.5]{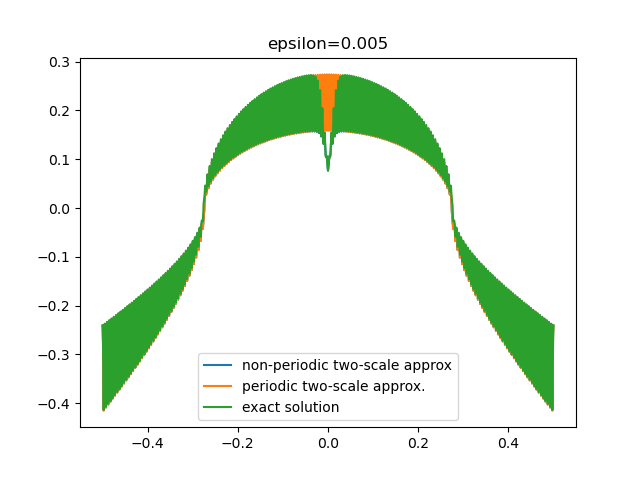}
\end{subfigure}
\includegraphics[scale=0.3]{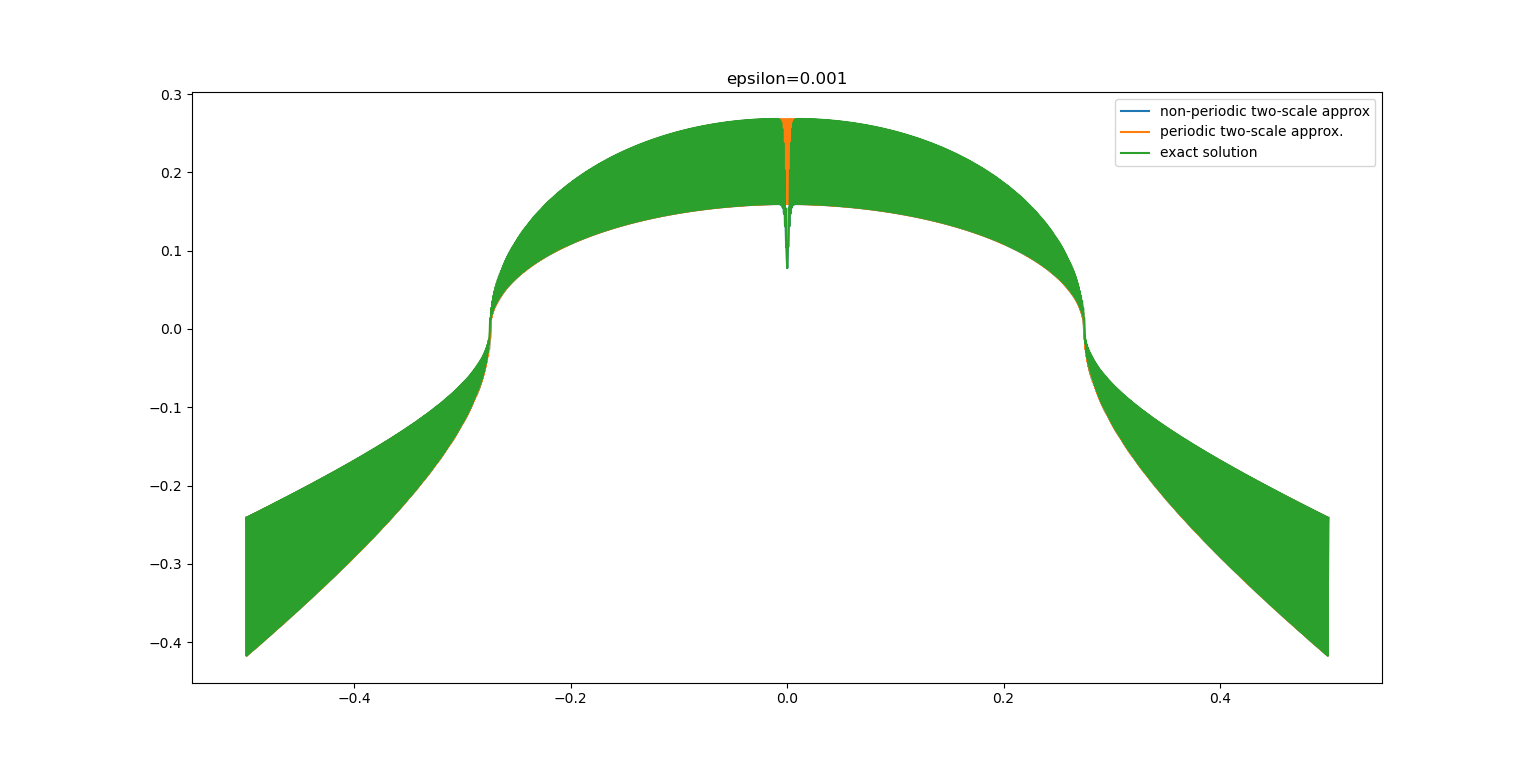}
\caption{Numerical simulation in the particular 1D case.}
\label{fig}
\end{figure}
\begin{table}[h!]
\centering
\begin{tabular}{|p{60pt}|p{60pt}|p{60pt}|}
\hline $\varepsilon$ & $\|R_{\varepsilon}^{\per}\|_{L^{\infty}}$ & $\|R_{\varepsilon}\|_{L^{\infty}}$\\
\hline 
0.1 & 0.156 & 0.109 \\
0.05 & 0.163 & 0.137 \\
0.01 & 0.170  & 0.0657 \\
0.005 & 0.170 & 0.0288 \\
0.001 & 0.170 & 0.0245 \\
0.0005 & 0.171 & 0.0136 \\
\hline
\end{tabular}
\caption{Numerical errors for different values of $\varepsilon$ in $L^{\infty}-$norm.}
\label{fig:tableaunum_1}
\end{table}
\begin{table}[h!]
\centering
\begin{tabular}{|p{60pt}|p{60pt}|p{60pt}|}
\hline $\varepsilon$ & $\|R_{\varepsilon}^{\per}\|_{L^2}$ & $\|R_{\varepsilon}\|_{L^2}$\\
\hline 
0.1 & 6.39 & 3.85 \\
0.05 & 5.01 & 3.16 \\
0.01 & 2.13  & 0.740 \\
0.005 & 1.47 & 0.331 \\
0.001& 0.654 & 0.108 \\
0.0005 & 0.46 & 0.0461 \\
\hline
\end{tabular}
\caption{Numerical errors for different values of $\varepsilon$ in $L^{2}-$norm.}
\label{fig:tableaunum_2}
\end{table}

\section[Existence of the correctors]{Existence of the non-periodic correctors: proof of Theorem~\ref{th:existencecor}}
\label{sect:cor}

We start this section with some preliminary results: 

\begin{lemme} Let $\xi \in \RR^d$ and $W_{\xi + \nabla w_{\xi}^{\per}}$ be defined by~\eqref{eq:Wu}.
\begin{enumerate}[label=(\roman*)]
\item The space $W_{\xi + \nabla w_{\xi}^{\per}}$ is a Banach space. 
\item Its topological dual space is
$$\left\{- \text{div}(g),\quad  g= g_1+g_2 |\xi + \nabla w_{\xi}^{\per}|^{p-2}, \quad g_1 \in L^{p'}(\R^d), \quad g_2 |\xi + \nabla w_{\xi}^{\per}|^{\frac{p-2}{2}} \in L^2(\R^d) \right\}.$$
\item Each bounded sequence in $W_{\xi + \nabla w_{\xi}^{\per}}$ admits a weakly converging subsequence.
\end{enumerate}
\label{lem:banach}
\end{lemme}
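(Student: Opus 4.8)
The plan is to realise $W_{\xi+\nabla w_{\xi}^{\per}}$ as a closed subspace of a concrete reflexive Banach space and then read off the three statements from this identification. Throughout, write $u:=\xi+\nabla w_{\xi}^{\per}$, which belongs to $L^{\infty}(\RR^d)$ by Proposition~\ref{prop:periodique}~(ii), and set $X:=L^{p}(\RR^d)^d\times L^{2}(\RR^d)^d$. First I would introduce the map
$$\iota:W_u\longrightarrow X,\qquad \iota(v):=\big(\nabla v,\ |u|^{\frac{p-2}{2}}\nabla v\big).$$
It is well defined on the quotient $\mathcal{W}_u/\RR$ (it involves only $\nabla v$), linear, injective (if $\nabla v=0$ then $v$ is a.e.\ constant on the connected set $\RR^d$), and, by the very definition~\eqref{eq:normeWu} of $\|\cdot\|_{W_u}$, an isometry onto its range in $X$.

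The key step is to show that $\iota(W_u)$ is closed in $X$. Let $(v_n)$ be a sequence with $\iota(v_n)\to(G,H)$ in $X$. Then $\nabla v_n\to G$ in $L^{p}(\RR^d)^d$, hence in $\mathcal{D}'(\RR^d)$; since each $\nabla v_n$ is curl-free, so is $G$, and the Poincar\'e lemma on $\RR^d$ (or, equivalently, choosing representatives with $\fint_Q v_n=0$ and applying Poincar\'e--Wirtinger on an exhausting sequence of balls) yields $v\in W^{1,p}_{loc}(\RR^d)\subset W^{1,1}_{loc}(\RR^d)$ with $\nabla v=G$. Moreover $|u|^{\frac{p-2}{2}}\nabla v_n\to|u|^{\frac{p-2}{2}}\nabla v$ in $L^{1}_{loc}(\RR^d)$ because $u\in L^{\infty}$ and $\nabla v_n\to G$ in $L^{p}_{loc}$, whence $H=|u|^{\frac{p-2}{2}}\nabla v$ a.e. Therefore $\int_{\RR^d}|\nabla v|^{p}=\|G\|_{L^p}^{p}<\infty$ and $\int_{\RR^d}|u|^{p-2}|\nabla v|^{2}=\|H\|_{L^2}^{2}<\infty$, i.e.\ $v\in\mathcal{W}_u$ and $\iota(v)=(G,H)$. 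Thus $\iota(W_u)$ is a closed subspace of the Banach space $X$, so $W_u$ is itself a Banach space: this is~(i).

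For~(ii), any $\ell\in W_u^{*}$ transports through $\iota$ to a continuous functional on the closed subspace $\iota(W_u)\subset X$, which extends by Hahn--Banach to an element of $X^{*}=L^{p'}(\RR^d)^d\times L^{2}(\RR^d)^d$; hence there are $g_1\in L^{p'}(\RR^d)^d$ and $\widetilde g\in L^{2}(\RR^d)^d$ with $\ell(v)=\int_{\RR^d}g_1\cdot\nabla v+\int_{\RR^d}\widetilde g\cdot|u|^{\frac{p-2}{2}}\nabla v$ for all $v\in W_u$. Setting $g_2:=\widetilde g\,|u|^{-\frac{p-2}{2}}$ on $\{u\neq0\}$ and $g_2:=0$ on $\{u=0\}$, one gets $g_2|u|^{\frac{p-2}{2}}\in L^{2}(\RR^d)$ and $\widetilde g\cdot|u|^{\frac{p-2}{2}}\nabla v=g_2|u|^{p-2}\cdot\nabla v$ pointwise, so $\ell(v)=\int_{\RR^d}(g_1+g_2|u|^{p-2})\cdot\nabla v=\langle-\di(g),v\rangle$ with $g=g_1+g_2|u|^{p-2}$. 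Conversely, H\"older's inequality gives $|\int_{\RR^d}g\cdot\nabla v|\leq\|g_1\|_{L^{p'}}\|\nabla v\|_{L^p}+\|g_2|u|^{\frac{p-2}{2}}\|_{L^2}\||u|^{\frac{p-2}{2}}\nabla v\|_{L^2}\leq C\|v\|_{W_u}$, so every such $g$ yields an element of $W_u^{*}$; this proves~(ii). Finally,~(iii) follows at once: $X$ is reflexive (both factors are, since $2\le p<\infty$), hence so is its closed subspace $\iota(W_u)\cong W_u$, and bounded sequences in a reflexive Banach space have weakly converging subsequences (by Eberlein--\v{S}mulian, or, since $X$ and thus $W_u$ is separable, by metrizability of the weak topology on bounded sets). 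One may also argue directly: extract a subsequence with $\nabla v_n\rightharpoonup G$ in $L^p$ and $|u|^{\frac{p-2}{2}}\nabla v_n\rightharpoonup H$ in $L^2$, identify $(G,H)=\iota(v)$ by testing against $|u|^{\frac{p-2}{2}}\varphi$ for $\varphi\in\mathcal{C}_0^{\infty}(\RR^d)$, and check $v_n\rightharpoonup v$ in $W_u$ using the description of $W_u^{*}$ from~(ii).

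I expect the main obstacle to be the closedness argument underlying~(i): correctly producing a representative $v\in\mathcal{W}_u$ of the limit (invoking the Poincar\'e lemma on $\RR^d$ while working modulo constants) together with the a.e.\ identity $H=|u|^{\frac{p-2}{2}}\nabla v$. Once the isometric embedding into $X$ is in place, the remaining points are standard functional analysis.
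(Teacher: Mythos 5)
Your proof is correct. The paper itself does not prove Lemma~\ref{lem:banach} (it defers to the author's thesis), but your route --- the isometric embedding $v \mapsto \big(\nabla v, |\xi+\nabla w_{\xi}^{\per}|^{\frac{p-2}{2}}\nabla v\big)$ into the reflexive space $L^p(\RR^d)^d\times L^2(\RR^d)^d$, closedness of the image via curl-freeness and a local Poincar\'e argument, Hahn--Banach for the dual, and reflexivity for the weak compactness --- is the standard argument for such weighted gradient spaces, and all the steps (including the careful handling of $g_2$ on the zero set of the weight) check out.
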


\begin{proof}We refer to~\cite[Chapter 5]{wolfphd} for the proof of this elementary Lemma. \end{proof}

 We now fix $\xi \in \R^d$, $h \in L^{p'}(\R^d)^d$, a coefficient $a$ satisfying Assumptions~\textbf{(A1)-(A2)-(A3)}. We introduce the functional $F_{\xi}$ defined by

\begin{equation}
F_{\xi}( v) := \frac{1}{p} \int_{\R^d} a g_{\xi + \nabla w_{\xi}^{\per}}(\nabla v) + \int_{\R^d} h \cdot \nabla v,
\label{eq:F_u}
\end{equation}
where the function $g_{\xi}$ is defined ny~\eqref{eq:g_(xi)}:
$$g_{\xi}(x) := |\xi + x|^p - |\xi|^p - p \xi|\xi|^{p-2}  \cdot x.$$ Since $g_{\xi}(x) \geq 0$ over $\R^d$, we immediately have that $F_{\xi}$ is defined over 
\begin{equation}
V := \left\{v \in W^{1,1}_{\text{loc}}(\R^d), \quad \nabla v \in L^p(\R^d)\right\}/\RR
\label{eq:V}
\end{equation}
 and takes its values in $\R \cup \{+ \infty\}$. Note that since $F_{\xi}(v)$ only depends on $\nabla v$, $F_{\xi}$ is well-defined on the space of equivalence classes $V$.
For $R > 0$, we define the mapping
\begin{equation}
F_{\xi}^R :
\begin{cases} \begin{aligned}
 V & \longrightarrow \R  \\
v & \longmapsto \frac{1}{p}\int_{B_R} a g_{\xi + \nabla w_{\xi}^{\per}}(\nabla v) + \int_{\R^d} h \cdot \nabla v.
\end{aligned} \end{cases}
\label{eq:Fr}
\end{equation} 
 We gather in Lemmas~\ref{lem:elementary_cor} and \ref{lem:diffF_u} below the key properties satisfied by the functional $F_{\xi}$.

\begin{lemme}
Let $\xi \in \R^d$, $h \in L^{p'}(\R^d)^d$, $F_{\xi}$ be defined by~\eqref{eq:F_u} over $V$ and the space $W_{\xi + \nabla w_{\xi}^{\per}}$ be defined by~\eqref{eq:Wu}.
\begin{enumerate}[label=(\roman*)]
\item There exist two constants $c, C > 0$ such that for all $v \in W_{\xi + \nabla w_{\xi}^{\per}}$, 
\begin{equation}
c \left[ - 1 + \|v \|^{2}_{W_{\xi + \nabla w_{\xi}^{\per}}} \right] \leq F_{\xi}(v) \leq C \left[1 + \| v \|^{p}_{W_{\xi + \nabla w_{\xi}^{\per}}} \right].
\label{eq:coercivit}
\end{equation}
In particular, $F_{\xi}(v)$ is finite if and only if $v \in W_{\xi + \nabla w_{\xi}^{\per}}$.
\item The function $F_{\xi}$ is convex over $V$ and strictly convex over $W_{\xi + \nabla w_{\xi}^{\per}}$.
\end{enumerate}
\label{lem:elementary_cor}
\end{lemme}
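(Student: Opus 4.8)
\textbf{Proof plan for Lemma~\ref{lem:elementary_cor}.}

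The plan is to exploit the algebraic inequalities of Appendix~\ref{sect:ineq} (as sketched after Definition~\ref{def:def}) which control the auxiliary function $g_{\xi + \nabla w_{\xi}^{\per}}(x)$ from above and below by a combination of $|x|^p$ and $|\xi + \nabla w_{\xi}^{\per}|^{p-2}|x|^2$. Concretely, one expects an equivalence of the form
$$
c\left(|x|^p + |\eta|^{p-2}|x|^2\right) \le g_\eta(x) \le C\left(|x|^p + |\eta|^{p-2}|x|^2\right)
$$
valid for all $x,\eta \in \RR^d$, with $c,C$ depending only on $p$. Granting this, apply it pointwise with $\eta = \xi + \nabla w_{\xi}^{\per}(y)$ and $x = \nabla v(y)$, multiply by $a(y)$ (which is pinched between $\lambda^{-1}$ and $\lambda$ by \textbf{(A1)}), and integrate over $\RR^d$. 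This gives
$$
\frac{c}{p\lambda}\left(\|\nabla v\|_{L^p(\RR^d)}^p + \big\||\xi + \nabla w_{\xi}^{\per}|^{\frac{p-2}{2}}\nabla v\big\|_{L^2(\RR^d)}^2\right) \le \frac1p\int_{\RR^d} a\, g_{\xi+\nabla w_\xi^{\per}}(\nabla v),
$$
and the symmetric upper bound, i.e.\ control of the main term of $F_\xi$ by the two pieces defining $\|v\|_{W_{\xi+\nabla w_\xi^{\per}}}^2$ (lower) and $\|v\|_{W_{\xi+\nabla w_\xi^{\per}}}^p$ (upper, using $a^p + b^p \le (a^2+b^2)^{p/2}$ type manipulations and $(a+b)^p \le 2^{p-1}(a^p+b^p)$).

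For the linear term $\int_{\RR^d} h\cdot\nabla v$, since $h \in L^{p'}(\RR^d)^d$ (as noted just before Definition~\ref{def:def}), H\"older's inequality gives $\left|\int h\cdot\nabla v\right| \le \|h\|_{L^{p'}}\|\nabla v\|_{L^p} \le \|h\|_{L^{p'}}\|v\|_{W_{\xi+\nabla w_\xi^{\per}}}$. For the lower bound in~\eqref{eq:coercivit} I would absorb this linear term using Young's inequality, $\|h\|_{L^{p'}}\|v\|_{W} \le \tfrac{\delta}{2}\|v\|_W^2 + \tfrac1{2\delta}\|h\|_{L^{p'}}^2$ when $\|v\|_W \ge 1$ (and trivially bounded when $\|v\|_W \le 1$), which after choosing $\delta$ small relative to $c/(p\lambda)$ yields $F_\xi(v) \ge c'(\|v\|_W^2 - 1)$; for the upper bound one just uses $\|v\|_W \le 1 + \|v\|_W^p$. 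The statement that $F_\xi(v)$ is finite iff $v \in W_{\xi+\nabla w_\xi^{\per}}$ follows since the lower bound shows $F_\xi$ is finite only on $W_{\xi+\nabla w_\xi^{\per}}$, while the upper bound shows it is finite there; one also uses that $V \supset W_{\xi+\nabla w_\xi^{\per}}$ and $g_{\xi+\nabla w_\xi^{\per}}(\nabla v) \ge 0$ so the integral is well-defined in $[0,+\infty]$ for every $v \in V$.

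For (ii), convexity of $F_\xi$ over $V$ reduces to convexity of $x \mapsto g_\eta(x) = |\xi + \nabla w_\xi^{\per} + x|^p$ minus an affine function of $x$ (the terms $-|\eta|^p$ and $-p\eta|\eta|^{p-2}\cdot x$ are affine in $x$, with $\eta = \xi+\nabla w_\xi^{\per}$ fixed pointwise), hence convex since $z \mapsto |z|^p$ is convex for $p \ge 1$; the linear term is affine, and $a \ge 0$, so integrating preserves convexity. Strict convexity over $W_{\xi+\nabla w_\xi^{\per}}$: if $v_0, v_1 \in W_{\xi+\nabla w_\xi^{\per}}$ with $\tfrac12 F_\xi(v_0) + \tfrac12 F_\xi(v_1) = F_\xi(\tfrac12 v_0 + \tfrac12 v_1)$, then since the integrand is a convex function of $\nabla v$, equality forces (using strict convexity of $z\mapsto|z|^p$ away from collinear configurations, or more simply strict convexity of $s \mapsto |z_0 + s(z_1-z_0)|^p$ unless $z_0 = z_1$) that $\nabla v_0 = \nabla v_1$ a.e.\ on the set $\{a > 0\} = \RR^d$, whence $v_0 = v_1$ in $V$, i.e.\ they coincide in $W_{\xi+\nabla w_\xi^{\per}}$ as equivalence classes.

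The main obstacle I anticipate is establishing the two-sided bound $g_\eta(x) \asymp |x|^p + |\eta|^{p-2}|x|^2$ with constants uniform in $\eta$; this is the kind of elementary-but-delicate inequality that Appendix~\ref{sect:ineq} is presumably devoted to, and the case distinctions ($|x|$ small vs large compared to $|\eta|$, and the behaviour near $\eta = 0$ where the weight degenerates) need care — in particular the lower bound near $x = 0$ requires a Taylor expansion of $z\mapsto|z|^p$ at $z = \eta \ne 0$, which only gives the $|\eta|^{p-2}|x|^2$ term and must be patched with a global convexity/homogeneity argument to cover $|x| \gtrsim |\eta|$. Everything else is a routine application of H\"older, Young, and $\ell^p$–$\ell^2$ norm comparisons.
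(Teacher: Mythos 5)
Your proposal is correct and follows essentially the same route as the paper: the two-sided bound you postulate for $g_\eta(x)$ is exactly \eqref{eq:useful_ineq_4} (Lemma~\ref{lem:B.1} in Appendix~\ref{sect:ineq}), and the paper likewise combines it with \textbf{(A1)}, H\"older and Young to absorb the linear term for (i), and deduces (ii) from the (strict) convexity of $z \mapsto |z|^p$. No gaps.
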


\begin{proof}
[Proof of Lemma~\ref{lem:elementary_cor}] The point (i) is a simple application of~\eqref{eq:useful_ineq_4}. Let $v \in W_{\xi + \nabla w_{\xi}^{\per}}$, we have thanks to~\eqref{eq:useful_ineq_4} together with H\"{o}lder inequality that 
\begin{equation}
\begin{aligned}
- \|h\|_{L^{p'}(\R^d)} \|\nabla v\|_{L^p(\R^d)} & + c \lambda^{-1} \int_{\R^d} | \nabla v|^p + |\xi + \nabla w_{\xi}^{\per}|^{p-2} |\nabla v|^2 \leq F_{\xi}( v) \\
& \leq \|h\|_{L^{p'}(\R^d)} \|\nabla v\|_{L^p(\R^d)} + C \lambda \int_{\R^d} | \nabla v|^p + |\xi + \nabla w_{\xi}^{\per}|^{p-2} |\nabla v|^2.
\end{aligned}
\end{equation}
Easy computations allow to deduce that
$$F_{\xi}( v) \leq \frac{1}{p'}\|h\|_{L^{p'}(\R^d)}^{p'} + (2 C \lambda + 1)\big\| v\big\|_{W_{\xi + \nabla w_{\xi}^{\per}}}^p.$$
This proves the right-most inequality of \eqref{eq:coercivit} after changing the constant $C$. For the left-most inequality, we write that, by Young inequality 
$$- \frac{\lambda}{2c} \|h\|_{L^{p'}(\R^d)}^{p'} + \frac{c \lambda^{-1}}{2}\int_{\R^d} |\nabla v|^p + c \lambda^{-1} \int_{\R^d} |\xi + \nabla w_{\xi}^{\per}|^{p-2} |\nabla v|^2 \leq F_{\xi}(v).$$
We deduce the lower bound
$$\frac{1}{2} \left\| v\right\|_{W_{\xi + \nabla w_{\xi}^{\per}}}^2 - 1 \leq \int_{\R^d} |\nabla v|^p + \int_{\R^d} |\xi + \nabla w_{\xi}^{\per}|^{p-2} |\nabla v|^2.$$
Thus,
$$- \left( \frac{\lambda}{2c}\|h\|_{L^{p'}(\R^d)}^{p'} + \frac{c \lambda^{-1}}{2} \right) + \frac{c \lambda^{-1}}{4} \| v\|_{W_{\xi + \nabla w_{\xi}^{\per}}}^2 \leq F_{\xi}( v).$$
After changing the constant $c$, we get~\eqref{eq:coercivit}. This proves (i).

\medskip

The point (ii) follows readily from the strict convexity of the application $z \mapsto |z|^p$.
\end{proof}

\begin{lemme} Let $\xi \in \R^d$, $h \in L^{p'}(\R^d)^d$, $F_{\xi}$ be defined by~\eqref{eq:F_u} over $V$ and the space $W_{\xi + \nabla w_{\xi}^{\per}}$ be defined by~\eqref{eq:Wu}. Then the application $F_{\xi}$ is Fr\'echet-differentiable over~$W_{\xi + \nabla w_{\xi}^{\per}}$. Its differential is given, for $v \in W_{\xi + \nabla w_{\xi}^{\per}}$, by
\begin{equation}
F'_{\xi}( v)\cdot u := \int_{\RR^d} \left\{a\left[(\xi + \nabla w_{\xi}^{\per} + \nabla v)\big| \xi + \nabla w_{\xi}^{\per} + \nabla v \big|^{p-2} - (\xi + \nabla w_{\xi}^{\per})\big| \xi + \nabla w_{\xi}^{\per}\big|^{p-2} \right]  + h\right\}\cdot \nabla u.
\label{eq:Euler_Lag}
\end{equation}
\label{lem:diffF_u}
\end{lemme}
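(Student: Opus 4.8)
\textbf{Proof plan for Lemma~\ref{lem:diffF_u}.}

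The plan is to compute the Gâteaux derivative first and then upgrade it to Fréchet differentiability. Fix $v \in W_{\xi + \nabla w_{\xi}^{\per}}$ and $u \in W_{\xi + \nabla w_{\xi}^{\per}}$. Since $F_{\xi}$ splits as the sum of the linear term $u \mapsto \int_{\R^d} h \cdot \nabla u$ (already Fréchet-differentiable, with derivative itself, because $h \in L^{p'}$ and $\nabla u \in L^p$) and the convex term $v \mapsto \frac1p \int_{\R^d} a\, g_{\xi + \nabla w_{\xi}^{\per}}(\nabla v)$, it suffices to treat the latter. Writing $e := \xi + \nabla w_{\xi}^{\per}$ and $G(v) := \frac1p\int_{\R^d} a\, g_e(\nabla v)$, one has pointwise
$$\frac1p\big[g_e(\nabla v + t\nabla u) - g_e(\nabla v)\big] = \frac1p\int_0^t \frac{d}{ds}|e + \nabla v + s\nabla u|^p\,ds = \int_0^t |e + \nabla v + s\nabla u|^{p-2}(e+\nabla v + s\nabla u)\cdot \nabla u\,ds,$$
so that, after multiplying by $a$, integrating over $\R^d$, dividing by $t$ and letting $t \to 0$, the candidate derivative is exactly the right-hand side of~\eqref{eq:Euler_Lag}. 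The passage to the limit under the integral sign is justified by dominated convergence: the integrand is bounded, uniformly in $s \in [0,1]$ and $t \in [-1,1]$, by $C\big(|e+\nabla v|^{p-1} + |\nabla u|^{p-1}\big)|\nabla u|$, which lies in $L^1(\R^d)$ — indeed $|\nabla u|^{p} \in L^1$, and $|e + \nabla v|^{p-1}|\nabla u| \le |e + \nabla v|^{p-2}|e+\nabla v||\nabla u| \lesssim |e+\nabla v|^{p-2}(|e+\nabla v|^2 + |\nabla u|^2)$, whose integrability follows from $v, u \in W_{\xi + \nabla w_{\xi}^{\per}}$ and the elementary inequalities of Appendix~\ref{sect:ineq} (together with $|e+\nabla v|^{p-2}|e|^2$ being controlled by $|e|^p \in L^\infty_{unif}$ on a bounded window and by $|e+\nabla v|^{p-2}|\nabla v|^2$ elsewhere, using Proposition~\ref{prop:periodique}~(ii)).

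Next I would check that the linear functional $u \mapsto F'_\xi(v)\cdot u$ defined by~\eqref{eq:Euler_Lag} is continuous on $W_{\xi + \nabla w_{\xi}^{\per}}$, i.e. bounded by $C(v)\|u\|_{W_{\xi + \nabla w_{\xi}^{\per}}}$. Splitting the bracket as in Lemma~\ref{lem:banach}~(ii): the part homogeneous of degree $p-1$ in the "large" regime is estimated by Hölder in $L^{p'}$–$L^p$ after noting $\big||z_1|^{p-2}z_1 - |z_2|^{p-2}z_2\big| \lesssim (|z_1|^{p-2}+|z_2|^{p-2})|z_1 - z_2|$, and the weighted part by Cauchy–Schwarz with weight $|e|^{p-2}$; in both cases the relevant norms of $\nabla v$ are finite precisely because $v \in W_{\xi + \nabla w_{\xi}^{\per}}$, and $h$ contributes via $\|h\|_{L^{p'}}\|\nabla u\|_{L^p}$. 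This shows $F'_\xi(v) \in (W_{\xi + \nabla w_{\xi}^{\per}})'$.

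Finally, to upgrade from Gâteaux to Fréchet differentiability, I would show that $v \mapsto F'_\xi(v)$ is continuous (norm-to-norm) from $W_{\xi + \nabla w_{\xi}^{\per}}$ to its dual; a convex Gâteaux-differentiable functional with continuous derivative is Fréchet-differentiable, so this closes the argument. For the continuity of $v \mapsto F'_\xi(v)$ one estimates, for $v, v' \in W_{\xi + \nabla w_{\xi}^{\per}}$ and test function $u$, the difference of the two nonlinear flux terms; using again $\big||z_1|^{p-2}z_1 - |z_2|^{p-2}z_2\big| \lesssim (|z_1|^{p-2}+|z_2|^{p-2})|z_1-z_2|$ with $z_1 = e + \nabla v$, $z_2 = e + \nabla v'$, then Hölder, one bounds $\|F'_\xi(v) - F'_\xi(v')\|_{(W_{\xi + \nabla w_{\xi}^{\per}})'}$ by a continuous function of $\|\nabla v - \nabla v'\|_{L^p}$ and the weighted $L^2$ norm of $\nabla v - \nabla v'$, vanishing as $v' \to v$. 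I expect the main technical obstacle to be bookkeeping the two scales of integrability simultaneously (the pure $L^p$ part and the $|e|^{p-2}$-weighted $L^2$ part), i.e. verifying that every product appearing in the difference quotient and in the derivative is genuinely integrable on $\R^d$ — this is exactly where the structure of the norm~\eqref{eq:normeWu} and the inequalities of Appendix~\ref{sect:ineq} (such as~\eqref{eq:useful_ineq_4}) must be invoked carefully rather than the naive $p$-Laplacian bounds valid only on bounded domains.
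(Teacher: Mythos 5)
Your overall strategy (compute the G\^ateaux derivative by differentiating along segments, then upgrade to Fr\'echet differentiability via norm-continuity of $v \mapsto F'_{\xi}(v)$) is viable but genuinely different from the paper's: the paper expands $F_{\xi}(v+u)-F_{\xi}(v)$ exactly, isolates the candidate linear part $B$, and bounds the remainder $A = g_{\xi+\nabla w_{\xi}^{\per}+\nabla v}(\nabla u)$ by $C\big(\|u\|^2_{W_{\xi+\nabla w_{\xi}^{\per}}}+\|u\|^p_{W_{\xi+\nabla w_{\xi}^{\per}}}\big)$ using \eqref{eq:useful_ineq_4}, which gives Fr\'echet differentiability in one step with an explicit $O(\|u\|^2)$ remainder. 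Your route is longer but would also work --- except that your first paragraph contains a real error.

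Writing $e := \xi+\nabla w_{\xi}^{\per}$, the definition \eqref{eq:g_(xi)} gives $g_e(X) = |e+X|^p - |e|^p - p\,e|e|^{p-2}\cdot X$, so the correct identity is
\begin{equation*}
\frac{1}{p}\big[g_e(\nabla v+t\nabla u)-g_e(\nabla v)\big] = \int_0^t\Big[|e+\nabla v+s\nabla u|^{p-2}(e+\nabla v+s\nabla u) - e|e|^{p-2}\Big]\cdot\nabla u\,\mathrm{d}s,
\end{equation*}
whereas your displayed identity drops the counter-term $-t\,e|e|^{p-2}\cdot\nabla u$. This is not cosmetic: without it the integrand is a single flux dotted with $\nabla u$ rather than a \emph{difference} of fluxes, and your proposed dominating function $C\big(|e+\nabla v|^{p-1}+|\nabla u|^{p-1}\big)|\nabla u|$ is not in $L^1(\R^d)$ in general. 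It controls from above the term $|e|^{p-1}|\nabla u|$, and for $u\in W_{e}$ one only knows $\nabla u\in L^p(\R^d)\cap L^2(|e|^{p-2}\mathrm{d}\lambda)$, which does not imply $\nabla u\in L^1(\R^d)$; your parenthetical reduction to ``$|e+\nabla v|^{p-2}|e|^2\le |e|^p$ on a bounded window'' cannot be made to work because the integration is over all of $\R^d$ and $|e|^p\notin L^1(\R^d)$ ($e$ is periodic up to a constant). The fix is simply to keep the counter-term: the integrand then reads $\big[|z_s|^{p-2}z_s - |e|^{p-2}e\big]\cdot\nabla u$ with $z_s := e+\nabla v+s\nabla u$, and \eqref{eq:useful_ineq_3} dominates it, uniformly in $|s|\le 1$, by $C\big(|e|^{p-2}+|\nabla v|^{p-2}+|\nabla u|^{p-2}\big)\big(|\nabla v|+|\nabla u|\big)|\nabla u|$, every product of which is integrable by Cauchy--Schwarz with weight $|e|^{p-2}$ or H\"{o}lder with exponents $\big(\frac{p}{p-2},p,p\big)$ --- exactly the bookkeeping you carry out correctly in your second paragraph. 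With that correction, your second and third paragraphs go through, and the passage from a continuous G\^ateaux derivative to Fr\'echet differentiability is standard (convexity is not even needed there).
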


\begin{proof}
[Proof of Lemma~\ref{lem:diffF_u}] We fix $v \in W_{\xi + \nabla w_{\xi}^{\per}}$ and $u\in W_{\xi + \nabla w_{\xi}^{\per}}$. We have that
\begin{equation}
F_{\xi}( v + u) - F_{\xi}( v) = \frac{1}{p} \int_{\R^d} a \left[ g_{\xi + \nabla w_{\xi}^{\per}}(\nabla v + \nabla u) - g_{\xi + \nabla w_{\xi}^{\per}}(\nabla v) \right] + \int_{\R^d} h \cdot \nabla u.
\label{eq:lemme_4.2}
\end{equation}
We note that
\begin{equation}
\begin{aligned}
& g_{\xi + \nabla w_{\xi}^{\per}}(\nabla v + \nabla u)  - g_{\xi + \nabla w_{\xi}^{\per}}(\nabla v) \\ & = \big|\xi + \nabla w_{\xi}^{\per} + \nabla v + \nabla u\big|^p - \big|\xi + \nabla w_{\xi}^{\per} + \nabla v \big|^p - p(\xi + \nabla w_{\xi}^{\per} + \nabla v)|\xi + \nabla w_{\xi}^{\per} + \nabla v|^{p-2} \cdot \nabla u \\
& \quad + p \left( (\xi + \nabla w_{\xi}^{\per} + \nabla v)|\xi + \nabla w_{\xi}^{\per} + \nabla v|^{p-2} - (\xi + \nabla w_{\xi}^{\per})|\xi + \nabla w_{\xi}^{\per}|^{p-2} \right) \cdot \nabla u \\
& = A + B,
\end{aligned}
\end{equation}
where
\begin{equation}
A :=\big|\xi + \nabla w_{\xi}^{\per} + \nabla v + \nabla u\big|^p - \big|\xi + \nabla w_{\xi}^{\per} + \nabla v \big|^p - p(\xi + \nabla w_{\xi}^{\per} + \nabla v)|\xi + \nabla w_{\xi}^{\per} + \nabla v|^{p-2} \cdot \nabla u
\label{eq:A_0}
\end{equation}
and
\begin{equation}
B := p \left( (\xi + \nabla w_{\xi}^{\per} + \nabla v)|\xi + \nabla w_{\xi}^{\per} + \nabla v|^{p-2} - (\xi + \nabla w_{\xi}^{\per})|\xi + \nabla w_{\xi}^{\per}|^{p-2} \right) \cdot \nabla u.
\label{eq:B}
\end{equation}
We note that, using the definition of $g_{\xi}$~\eqref{eq:g_(xi)}, 
$$
A =  g_{\xi + \nabla w_{\xi}^{\per} + \nabla v}(\nabla u) \leq C \left\{ \big| \nabla u \big|^p +  \big|\xi + \nabla w_{\xi}^{\per} + \nabla v|^{p-2} \big| \nabla u \big|^2 \right\},
$$
where we have used the right-most part of inequality~\eqref{eq:useful_ineq_4}. Thus, applying the inequality $(b_1 + b_2)^{p-2} \leq C(p)(b_1^{p-2} + b_2^{p-2})$ for $b_1,b_2 \geq 0$, we get that
\begin{equation}
|A| \leq C \left\{ |\nabla u|^p + |\xi + \nabla w_{\xi}^{\per}|^{p-2}|\nabla u|^2 + |\nabla v|^{p-2}|\nabla u|^2 \right\}.
\label{eq:estum_A}
\end{equation}
We now note that, due to H\"{o}lder inequality and the fact that
$$\left(\frac{p}{2} \right)' \left(\frac{p}{2} - 1 \right) = \frac{p}{2} \quad \Longrightarrow\quad \left(\frac{p}{2} \right)' \big(p-2 \big) = p,$$
we obtain
\begin{equation}
\int_{\R^d} |\nabla v|^{p-2}|\nabla u|^2 \leq \left( \int_{\R^d} |\nabla v|^p  \right)^{1 - 2/p} \left( \int_{\R^d} |\nabla u|^p \right)^{2/p}.
\label{eq:A}
\end{equation}
Gathering \eqref{eq:estum_A}, \eqref{eq:A} and recalling the definition~\eqref{eq:normeWu}, we have proved that $A \in L^1(\R^d)$ and that 
\begin{equation}
\int_{\R^d} |A| \leq C \left[ \|u \|_{W_{\xi + \nabla w_{\xi}^{\per}}}^2 + \|u \|_{W_{\xi + \nabla w_{\xi}^{\per}}}^p \right],
\label{eq:A_1}
\end{equation}
where the constant $C$ does not depend on $u$.
We now turn to estimating $B$, see~\eqref{eq:B}. Using~\eqref{eq:useful_ineq_3}, Cauchy-Schwarz inequality and Young inequality, we have that 
\begin{equation}
\begin{aligned}
|B| & \leq C \left[ |\xi + \nabla w_{\xi}^{\per} + \nabla v|^{p-2} + |\xi + \nabla w_{\xi}^{\per}|^{p-2} \right] |\nabla v| |\nabla u| \\
& \leq C \left[ |\xi + \nabla w_{\xi}^{\per}|^{p-2} |\nabla v|^2 + |\xi + \nabla w_{\xi}^{\per}|^{p-2} |\nabla u|^2 + |\nabla v|^{p} + |\nabla u|^p \right].
\end{aligned}
\end{equation}
This proves that $B \in L^1(\R^d)$ and that 
\begin{equation}
\int_{\R^d} |B| \leq C \left[ \| v \|_{W_{\xi + \nabla w_{\xi}^{\per}}}^2 + \| u \|_{W_{\xi + \nabla w_{\xi}^{\per}}}^2 +  \|v \|_{W_{\xi + \nabla w_{\xi}^{\per}}}^p  +\| u \|_{W_{\xi + \nabla w_{\xi}^{\per}}}^p  \right],
\label{eq:B_int}
\end{equation}
where the constant $C$ is independent of $v$ and $u$. We can now conclude the proof of Lemma~\ref{lem:diffF_u}: using~\eqref{eq:lemme_4.2} and the notations~\eqref{eq:A_0} and~\eqref{eq:B}, we have that 
\begin{equation}
F_{\xi}( v +  u) - F_{\xi}( v) - \left\{\frac{1}{p}\int_{\R^d} B + \int_{\R^d} h \cdot \nabla u \right\} = \frac{1}{p}\int_{\R^d} A.
\label{eq:lemme_4.2_1}
\end{equation}
Defining 
$$\begin{aligned}
L_v(u) &:=  \frac{1}{p}\int_{\R^d} B + \int_{\R^d} h \cdot \nabla u \\ &=\int_{\R^d} a \left( (\xi + \nabla w_{\xi}^{\per} + \nabla v)|\xi + \nabla w_{\xi}^{\per} + \nabla v|^{p-2} - (\xi + \nabla w_{\xi}^{\per})|\xi + \nabla w_{\xi}^{\per}|^{p-2} \right) \cdot \nabla u + \int_{\R^d} h \cdot \nabla u
\end{aligned}$$
and noting that, thanks to~\eqref{eq:B_int}, $L_v$ is a bounded linear form on $W_{\xi + \nabla w_{\xi}^{\per}}$, we have, gathering~\eqref{eq:lemme_4.2_1} and~\eqref{eq:A_1} together,
$$F_{\xi}(v +  u) - F_{\xi}(v) - L_v(u) = O_{u \rightarrow 0} \left(\|u\|^2_{W_{\xi + \nabla w_{\xi}^{\per}}} \right).$$
Lemma~\ref{lem:diffF_u} is proved.
\end{proof}

\begin{proof}[Proof of Theorem~\ref{th:existencecor}]
We prove below that, for $h \in L^{p'}(\R^d)^d$, the PDE
\begin{equation}
- \text{div} a \left[ \big|\xi + \nabla w_{\xi}^{\per} + \nabla w_{\xi} \big|^{p-2}(\xi + \nabla w_{\xi}^{\per} + \nabla \widetilde{w_{\xi}}) - \big|\xi  + \nabla w_{\xi}^{\per} \big|^{p-2}(\xi + \nabla w_{\xi}^{\per}) \right] = \text{div}(h),
\label{eq:th2.2_gen}
\end{equation}
admits a unique solution $\widetilde{w_{\xi}} \in W_{\xi + \nabla w_{\xi}^{\per}}$ in the weak sense (see Definition~\ref{def:def}). Theorem~\ref{th:existencecor} is then proved by defining
\begin{equation}
h:= \widetilde{a} (\xi + \nabla w_{\xi}^{\per})|\xi + \nabla w_{\xi}^{\per}|^{p-2}.
\label{eq:f}
\end{equation}
Because of Proposition~\ref{prop:periodique} (ii) and Assumptions~\textbf{(A2)-(A3)}, it is clear that $h \in L^{p'}(\RR^d)^d$. Since~\eqref{eq:th2.2_gen} is solvable for this choice of $h$, Theorem~\ref{th:existencecor} is proved. 

\medskip

We are thus left to study the PDE~\eqref{eq:th2.2_gen} for an abstract right-hand side $h \in L^{p'}(\R^d)^d$. 
  With Lemma~\ref{lem:elementary_cor}, Lemma~\ref{lem:diffF_u} and Lemma~\ref{lem:banach}, we prove in a standard way that Problem~\eqref{eq:th2.2_gen} admits a unique solution. Indeed, let us consider the minimization Problem:
 \begin{equation}
 \min_{v \in W_{\xi + \nabla w_{\xi}^{\per}}} \ F_{\xi}(v).
 \label{eq:min}
\end{equation}  
This Problem admits a unique solution. The existence is guaranteed by the following procedure: let $(v_n)_{n \in \NN} \subset W_{\xi + \nabla w_{\xi}^{\per}}$ be a minimizing sequence. Then, by the left-hand estimate of~\eqref{eq:coercivit}, we have that the sequence $\left(\| v_n \|_{W_{\xi + \nabla w_{\xi}^{\per}}} \right)_{n \in \NN}$ is bounded (see~\eqref{eq:normeWu} for the definition of $\|\cdot \|_{W_{\xi + \nabla w_{\xi}^{\per}}}$). By Lemma~\ref{lem:banach} (iii), we get that the sequence $(v_n)_{n \in \NN}$ weakly converges, up to a subsequence, to some $v$ in $W_{\xi + \nabla w_{\xi}^{\per}}$ when $n \longrightarrow +\infty$. Since by Lemma~\ref{lem:elementary_cor} (ii) and Lemma~\ref{lem:diffF_u}, $F_{\xi}$ is convex and continuous over $W_{\xi + \nabla w_{\xi}^{\per}}$, it is in particular weakly lower semi-continuous. Thus 
$$F_{\xi}(v) \leq \liminf_{n \rightarrow + \infty} F_{\xi}(v_n) = \inf_{W_{\xi + \nabla w_{\xi}^{\per}}} F_{\xi}.$$
This concludes the existence of a solution to \eqref{eq:min}. The uniqueness is given by the strict convexity of $F_{\xi}$, see Lemma~\ref{lem:elementary_cor} (ii). We finally note that the convexity of $F_{\xi}$ together with its differentiability ensure that being a solution to Problem \eqref{eq:min} is equivalent to solve the PDE \eqref{eq:th2.2_gen}, since \eqref{eq:Euler_Lag} is exactly the weak form of \eqref{eq:th2.2_gen} in the sense of Definition~\ref{def:def}. Theorem~\ref{th:existencecor} is proved.
\end{proof}

\section[Properties of the correctors]{Properties of the non-periodic correctors: proof of Theorem~\ref{th:th_nonlin}}
\label{sect:propcor}

\subsection{A useful Lemma}

We begin by introducing the following function: for all $\xi, \eta \in \R^d$, the function $G_{\xi,\eta}$ is defined over $\R^d \times \R^d$ by
\begin{equation}
G_{\xi,\eta}(X,Y) := |\xi + X|^p + |\eta + Y|^p - \left|\xi + \frac{X+Y}{2}\right|^p - \left|\eta + \frac{X+Y}{2} \right|^p - \frac{p}{2} \left(  \xi|\xi|^{p-2} - \eta |\eta|^{p-2} \right)\cdot (X - Y).
\label{eq:defcont}
\end{equation}

The following Lemma gives a lower bound for $G_{\xi,\eta}$ that will allow to prove Theorem~\ref{th:coercivitypercor} (iii).

\begin{lemme} Suppose that $2 \leq p < 3$. For all $\delta > 0$,
there exist constants $\gamma_p  = \gamma(p) > 0$ and $c_p = c(p)>0$ such that for all $X,Y \in \R^d$, for all $\xi \in \R^d \setminus B(0,\delta)$ and $\eta \in B(\xi,\delta/2)$, we have that 
\begin{equation}
G_{\xi,\eta}(X,Y) \geq \gamma_p |X - Y|^p - c_p\left\{ |\xi - \eta|^{p-2} |X-Y| + \delta^{p-3}|\xi - \eta||X+Y| \right\}|X - Y|.
\label{eq:lemcont}
\end{equation}
Suppose that $p \geq 3$. There exist constants $\gamma_p = \gamma(p) > 0$ and $c_p = c(p) > 0$ such that for all $X, Y \in \R^d$ and all $\xi, \eta \in \R^d$, 
\begin{equation}
\begin{aligned}
G_{\xi,\eta}&(X,Y) \geq \gamma_p |X - Y|^p \\
& - c_p\left\{ |\xi - \eta|^{p-2} |X-Y| + |\xi - \eta||X + Y|^{p-2} + (|\xi| + |\eta|)^{p-3}|\xi - \eta||X+Y| \right\}|X - Y|.
\end{aligned}
\label{eq:lemcont2}
\end{equation}
\label{lem:lemcont}
\end{lemme}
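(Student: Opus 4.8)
The plan is to derive both inequalities from the elementary convexity estimates for the map $z\mapsto|z|^p$ that are already collected in Appendix~\ref{sect:ineq}, splitting $G_{\xi,\eta}$ into a "diagonal" part that is genuinely coercive in $X-Y$ and an "off-diagonal" error that is linear in $\xi-\eta$. First I would rewrite the definition~\eqref{eq:defcont} by completing squares: set $S:=\tfrac{X+Y}{2}$ and $D:=\tfrac{X-Y}{2}$, so that $X=S+D$, $Y=S-D$, and observe that
\[
G_{\xi,\eta}(X,Y)=\Big(|\xi+S+D|^p-|\xi+S|^p\Big)+\Big(|\xi+S-D|^p-|\xi+S|^p\Big)+\Big(|\xi+S|^p-|\eta+S|^p\Big)+\Big(|\eta+S|^p-\big|\xi+\tfrac{X+Y}{2}\big|^p\text{ correction}\Big)+\dots
\]
More cleanly, I would group it as $G_{\xi,\eta}(X,Y)=\Phi_\xi(S,D)+\Phi_\eta(S,-D)+\text{[mixed }\xi,\eta\text{ terms]}$, where $\Phi_\zeta(S,D):=|\zeta+S+D|^p+|\zeta+S-D|^p-2|\zeta+S|^p-p\,\zeta|\zeta|^{p-2}\cdot(2D)/2$ is a second-difference-type quantity. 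The point is that the first two bracketed pairs, by the quantitative convexity inequality~\eqref{eq:useful_ineq_4} (applied with base point $\xi+S$, resp.\ $\eta+S$), contribute a term bounded below by $c_p|D|^p\sim c_p|X-Y|^p$ (for $p\ge2$, $(a+b)^{p/2}$-type lower bounds give exactly the $|X-Y|^p$ growth after using $|D|^2(\text{something})^{p-2}\ge|D|^p$ up to the regime where the something is small, which is controlled by the remaining terms), while the discrepancy between using base point $\xi$ versus $\eta$ in the linear correction is exactly $-\tfrac p2(\xi|\xi|^{p-2}-\eta|\eta|^{p-2})\cdot(X-Y)$, which I would estimate by~\eqref{eq:useful_ineq_3}: $|\xi|\xi|^{p-2}-\eta|\eta|^{p-2}|\le C(|\xi|^{p-2}+|\eta|^{p-2})|\xi-\eta|$, and, when $\eta\in B(\xi,\delta/2)$ with $|\xi|\ge\delta$, also by the mean-value form $\le C\delta^{p-3}|\xi-\eta|$ when $p<3$ (using $\sup_{|z|\in[\delta/2,3\delta/2]}|\partial^2(z|z|^{p-2})|\le C\delta^{p-3}$, which is where the $\delta^{p-3}|\xi-\eta||X+Y|$ term enters).

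The next step is to handle the genuinely mixed terms $|\xi+S|^p-|\eta+S|^p$ together with the two "cross" pieces $-|\xi+S|^p-|\eta+S|^p+\dots$; after the grouping above one finds these collapse (they cancel in pairs once $S=\tfrac{X+Y}{2}$ is substituted), so that the only surviving $\xi\ne\eta$ contribution is the linear term $-\tfrac p2(\xi|\xi|^{p-2}-\eta|\eta|^{p-2})\cdot(X-Y)$ plus second-order remainders coming from expanding $|\xi+S+D|^p$ around $|\xi+S|^p$ versus around $|\xi|^p$. For these remainders I would use the Taylor-with-integral-remainder bound for $z\mapsto z|z|^{p-2}$: its increment between $\xi$ and $\xi+S\pm D$ is controlled, via the second-derivative bound, by terms of the form $(|\xi|+|S|+|D|)^{p-3}(|S|+|D|)\cdot|D|$ when $p\ge3$, producing precisely the $|\xi-\eta||X+Y|^{p-2}$ and $(|\xi|+|\eta|)^{p-3}|\xi-\eta||X+Y|$ contributions in~\eqref{eq:lemcont2}, and for $2\le p<3$ the weaker local bound (valid on the annulus $|\xi|\ge\delta$) yielding $\delta^{p-3}|\xi-\eta||X+Y|$ in~\eqref{eq:lemcont}. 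Throughout, a Young inequality of the form $ab\le\tfrac{\gamma_p}{2}a^{p/(p-1)}\cdot(\dots)$ lets me absorb part of the error into the good term $\gamma_p|X-Y|^p$, leaving the stated right-hand sides with the factor $|X-Y|$ pulled out.

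Concretely the proof would proceed: (1) reduce to showing a pointwise inequality and normalize by homogeneity where convenient; (2) write $G_{\xi,\eta}=[\,|\xi+X|^p+|\xi+Y|^p-2|\xi+\tfrac{X+Y}{2}|^p\,]+[\text{symmetrize }\xi\leftrightarrow\eta]+[\text{collect}]$ so that the bracketed second-difference terms are each $\ge c_p|X-Y|^p$ by~\eqref{eq:useful_ineq_4}; (3) bound the leftover linear-in-$(\xi-\eta)$ piece by~\eqref{eq:useful_ineq_3} (case $p\ge3$) or by the local second-derivative estimate on $\{|\xi|\ge\delta\}$ (case $2\le p<3$); (4) bound the leftover second-order-in-$(X,Y)$, first-order-in-$(\xi-\eta)$ remainders by the Taylor expansion of $z\mapsto z|z|^{p-2}$; (5) absorb via Young's inequality. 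I expect the main obstacle to be the bookkeeping in step (2)–(4): making the grouping exact so that no spurious terms survive, and, in the range $2\le p<3$, being careful that the second-difference lower bound $|X-Y|^2(\text{base})^{p-2}\gtrsim|X-Y|^p$ is only available where $|\text{base}|\lesssim|X-Y|$ is false, so one must either invoke the sharper form of~\eqref{eq:useful_ineq_4} that already gives the $|X-Y|^p$ growth unconditionally for $p\ge2$, or split into the two regimes $|X-Y|\lessgtr|\xi+\tfrac{X+Y}{2}|$ and treat the small-increment regime by the quadratic term. The $\delta$-dependence, entering only through $\sup|\partial^2(z|z|^{p-2})|\sim\delta^{p-3}$ on the relevant annulus, is the reason the first statement is restricted to $\xi\notin B(0,\delta)$, and I would make that usage explicit.
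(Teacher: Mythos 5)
Your overall strategy is the right one, and several ingredients match the paper's proof exactly: the change of variables $Z=\tfrac{X-Y}{2}$, $T=\tfrac{X+Y}{2}$; the mean-value estimate for $\Psi_T(x):=(x+T)|x+T|^{p-2}-x|x|^{p-2}$ on the segment $[\xi,\eta]\subset\R^d\setminus B(0,\delta/2)$, whose derivative bound $|\Psi_T'(x)|\le C_p\{\delta^{p-3}|T|+|x|^{p-3}|T|\}$ is precisely where the $\delta^{p-3}|\xi-\eta||T|$ term and the restriction $|\xi|\ge\delta$ come from when $p<3$; and the observation that for $p\ge3$ the analogous bound is $\delta$-free at the cost of the extra terms in \eqref{eq:lemcont2}. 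But your step (2) contains a genuine gap: the decomposition
$G_{\xi,\eta}=\big[\,|\xi+X|^p+|\xi+Y|^p-2|\xi+\tfrac{X+Y}{2}|^p\,\big]+\big[\text{symmetrize }\xi\leftrightarrow\eta\big]+\cdots$
is not an identity. The definition \eqref{eq:defcont} contains only $|\xi+X|^p$ and $|\eta+Y|^p$; introducing the symmetric second differences forces you to add and subtract $|\xi+Y|^p$ and $|\eta+X|^p$, and these spurious terms do \emph{not} cancel in pairs and are not of the admissible error form (they are of size $O(|X-Y|\cdot(|\xi|+|X|+|Y|)^{p-1})$ with no factor $|\xi-\eta|$). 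So the "collect" step, as described, cannot close.

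The repair is to keep the asymmetric structure: with $T$ fixed, apply the strong convexity of $|\cdot|^p$ at base point $\xi+T$ in the direction $+Z$ and at base point $\eta+T$ in the direction $-Z$. This is what the paper does, in the equivalent form of showing that $\Phi_{\gamma_p}(Z):=|\xi+T+Z|^p+|\eta+T-Z|^p-\gamma_p|Z|^p$ becomes convex after adding $\tfrac{p}{2}|\xi-\eta|^{p-2}|Z|^2$ (via a Hessian computation and the inequality $|Z|^{p-2}\le C(|\xi+T+Z|^{p-2}+|\eta+T-Z|^{p-2}+|\xi-\eta|^{p-2})$, which is the source of the $|\xi-\eta|^{p-2}|X-Y|^2$ error term), and then bounding $\Phi_{\gamma_p}$ below by its tangent at $Z=0$. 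The tangent's slope is $p\big[(\xi+T)|\xi+T|^{p-2}-(\eta+T)|\eta+T|^{p-2}\big]$, and the discrepancy with the slope $p\big[\xi|\xi|^{p-2}-\eta|\eta|^{p-2}\big]$ appearing in $G_{\xi,\eta}$ is exactly $\Psi_T(\xi)-\Psi_T(\eta)$, handled by your mean-value argument; note that this discrepancy is contracted against $Z$, so the factor $|X-Y|$ comes out for free and no Young absorption is needed. Your worry about the regime $|X-Y|\lesssim|\text{base}|$ is moot, since \eqref{eq:useful_ineq_4} already yields the $|X-Y|^p$ lower bound unconditionally for $p\ge2$.
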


\begin{proof}
[Proof of Lemma~\ref{lem:lemcont}] We first give the proof of Estimate~\eqref{eq:lemcont}. We have that $\xi \neq 0$ and $\eta \neq 0$. For all $X,Y \in \R^d$, we define $Z := \frac{X-Y}{2}$ and $T := \frac{X+Y}{2}$. Inequality~\eqref{eq:lemcont} is equivalent to the following inequality: for any $Z, T \in \R^d$,
\begin{equation}
\begin{aligned}
|\xi + T + Z|^p + |\eta + T - Z|^p - &|\xi + T|^p - |\eta + T|^p - p (\xi|\xi|^{p-2} - \eta|\eta|^{p-2})\cdot Z \\ 
& \geq \gamma_p|Z|^p - c_{p}\left\{  |\xi-\eta|^{p-2}|Z| + \delta^{p-3}|\xi-\eta||T| \right\}|Z|.
\end{aligned}
\label{eq:eqZT}
\end{equation}
We prove~\eqref{eq:eqZT} for any $Z,T \in \R^d$.
We fix $T \in \R^d$ and we introduce the function
$$\Phi_{\gamma_p}(Z):= |\xi + T + Z|^p + |\eta + T - Z|^p - \gamma_p |Z|^p,$$
where $\gamma_p> 0$ is to be chosen later.
Since $p \geq 2$, the function $\Phi_{\gamma_p}$ is of class $\mathcal{C}^2$. Besides, denoting by~$\I$ the identity matrix, we have that 
$$ \begin{aligned}
\Phi''_{\gamma}(Z) & = p|\xi + T + Z|^{p-2} \text{I} + p(p-2)|\xi + Z + T|^{p-4}(\xi + Z + T) \otimes (\xi + Z + T) + p|\eta + T - Z|^{p-2} \text{I} \\
& + p(p-2)|\eta + T - Z|^{p-4}(\eta + T - Z) \otimes (\eta + T - Z) - \gamma_p p |Z|^{p-2} \text{I} - \gamma_p p(p-2) |Z|^{p-4} Z \otimes Z.
\end{aligned}$$
Thus, for all $h \in \R^d$,
\begin{equation}
\begin{aligned}
\Phi''_{\gamma}(Z)(h,h) & \geq p |\xi + T + Z|^{p-2}|h|^2 + p|\eta + T - Z|^{p-2} |h|^2 - \gamma_p p|Z|^{p-2}|h|^2 - \gamma_p p(p-2)|Z|^{p-4}(Z \cdot h)^2\\
& \geq  p \left[ |\xi + T + Z|^{p-2} + |\eta + T - Z|^{p-2} - \gamma_p(p-1)|Z|^{p-2} \right]|h|^2.
\end{aligned}
\label{eq:borne}
\end{equation}
We next note that
\begin{equation}
\begin{aligned}
|Z|^{p-2} & = \left|\frac{1}{2}(Z + \xi +T) + \frac{1}{2}(Z - \eta - T) + \frac{1}{2}(\eta - \xi) \right|^{p-2} \\
& \leq C(p) \left(|\xi + T + Z|^{p-2} + |\eta + T - Z|^{p-2} + |\xi - \eta|^{p-2} \right),
\end{aligned}
\label{eq:lemme_calc1}
\end{equation}
where we have used the triangle inequality together with the fact that for all $m \geq 1$ and $p\geq 2$, there exists a constant $C(p,m)$ such that 
$$\forall a_1,...,a_m \geq 0, \quad (a_1 + \cdots + a_m)^{p-2} \leq C(p,m) \left( a_1^{p-2} + \cdots + a_m^{p-2} \right).$$
Estimate~\eqref{eq:lemme_calc1} together with inequality~\eqref{eq:borne} give that 
$$\forall h \in \R^d, \quad \Phi''_{\gamma_p}(Z)(h,h) \geq - p |\xi - \eta|^{p-2} |h|^2$$
for $$\gamma_p := \frac{1}{C(p)(p-1)}.$$
The function $\Phi_{\gamma_p} + \frac{p}{2} |\xi - \eta|^{p-2} |\cdot|^2$ is convex, hence
$$\forall Z \in \R^d, \quad \Phi_{\gamma_p}(Z) + \frac{p}{2} |\xi - \eta|^{p-2} |Z|^2 \geq \Phi_{\gamma_p}(0) + \nabla \Phi_{\gamma_p}(0)\cdot Z.$$
We have thus proved that 
$$\Phi_{\gamma_p}(Z) \geq |\xi + T|^p + |\eta + T|^p + p\left[ (\xi + T)|\xi + T|^{p-2} - (\eta + T)|\eta + T|^{p-2} \right] \cdot Z - \frac{p}{2} |\xi - \eta|^{p-2} |Z|^2.$$
This proves estimate~\eqref{eq:eqZT} if $T = 0$. If $T \neq 0$, it remains to prove that
\begin{equation}
\big|(\xi + T)|\xi + T|^{p-2} - (\eta + T)|\eta + T|^{p-2} - \xi |\xi|^{p-2} + \eta |\eta|^{p-2} \big| \leq c_{p} \delta^{p-3}|\xi - \eta||T|.
\label{eq:accfinis}
\end{equation}
We want to apply the mean-value inequality to the function $\Psi_T$ defined by
$$\Psi_{T}(x) := |x+T|^{p-2}(x+T) - x|x|^{p-2}, \quad x \in [\xi, \eta] \subset \R^d \setminus B(0,\delta/2),$$
which is differentiable over $\R^d$.
We have that 
$$\Psi_T'(x) = \left( |x+T|^{p-2} - |x|^{p-2} \right)\I + \left((x + T) \otimes (x + T)|x+T|^{p-4} - x \otimes x |x|^{p-4} \right).$$
We now note that there exists a constant $C_p > 0$ such that for all $x \in \R^d \setminus B(0,\delta/2)$,
\begin{equation}
\left| |x+T|^{p-2} - |x|^{p-2} \right| \leq C_p \left\{\delta^{p-3}|T| + |x|^{p-3}|T| \right\}
\label{eq:estimaccfinis}
\end{equation}
and
\begin{equation}
\left|(x + T) \otimes (x + T)|x+T|^{p-4} - x \otimes x |x|^{p-4} \right| \leq C_p \left\{\delta^{p-3}|T| + |x|^{p-3}|T| \right\}.
\label{eq:estimaccfinis2}
\end{equation}
Noting that $|x|^{p-3} \leq (\frac{1}{2})^{p-3}|\delta|^{p-3}$ since $p \leq 3$, we have proved~\eqref{eq:accfinis}. The proof of Lemma~\ref{lem:lemcont} is completed up to the justification of~\eqref{eq:estimaccfinis}-\eqref{eq:estimaccfinis2}. 

\medskip

\noindent\textit{Proof of~\eqref{eq:estimaccfinis} and~\eqref{eq:estimaccfinis2}}. We concentrate on the first inequality: assume first that $|T| \geq \frac{1}{2}|x| \geq \frac{1}{4}|\delta|$, then 
\begin{equation}
\left| |x+T|^{p-2} - |x|^{p-2} \right| \leq C_p|T|^{p-2} \leq C_p \delta^{p-3} |T|.
\label{eq:borne1}
\end{equation}
We now treat the case $|T| \leq \frac{1}{2}|x|$. In particular $\big|\frac{T}{|x|}\big| \leq \frac{1}{2}$ and thus
\begin{equation}
\left| |x+T|^{p-2} - |x|^{p-2} \right| = |x|^{p-2} \left| \big|\frac{x}{|x|} + \frac{T}{|x|} \big|^{p-2} - \big|\frac{x}{|x|}\big|^{p-2} \right| \leq C_p|x|^{p-2} \big|\frac{T}{|x|}\big| = C_p|T| |x|^{p-3},
\label{eq:borne2}
\end{equation}
since the function $y \mapsto \big|\frac{x}{|x|} + y\big|^{p-2}$ is regular on $B(0,\frac{3}{4})$ with derivative uniformly bounded in $x$. Estimate~\eqref{eq:estimaccfinis2} is proved the same way. We have concluded the proof.

\medskip

\noindent\textit{Proof of~\eqref{eq:lemcont2}}. We assume that $p \geq 3$. With the above variables $T$ and $Z$, \eqref{eq:lemcont2} is equivalent to proving that for all $Z,T,\xi$ and $\eta \in \R^d$, the following inequality holds true:
\begin{equation}
\begin{aligned}
|\xi + T + Z|^p + &|\eta + T - Z|^p - |\xi + T|^p - |\eta + T|^p - p (\xi|\xi|^{p-2} - \eta|\eta|^{p-2})\cdot Z \\ 
& \geq \gamma_p|Z|^p - c_{p}\left\{  |\xi-\eta|^{p-2}|Z| + |\xi - \eta| |T|^{p-2} + (|\xi| + |\eta)^{p-3} |\xi - \eta||T| \right\}|Z|.
\end{aligned}
\label{eq:eqZT_2}
\end{equation}
Applying the same method as for the proof of~\eqref{eq:lemcont}, we only have to prove that
\begin{equation}
\begin{aligned}
\big|(\xi + T)|\xi + T|^{p-2} - (\eta + T)|\eta + T|^{p-2} & - \xi |\xi|^{p-2}  + \eta |\eta|^{p-2} \big|\\& \leq c_{p}\left\{|\xi - \eta||T|^{p-2} + (|\xi| + |\eta)^{p-3} |\xi - \eta||T|\right\}.
\end{aligned}
\label{eq:accfinis2}
\end{equation}
We once again appeal to the mean-value inequality on $\Psi_T$, noticing that, in this case, see~\eqref{eq:borne1} and~\eqref{eq:borne2}, we have for all $x \in \R^d$, 
\begin{equation}
|\Psi_T'(x)| \leq C_p \left\{|T|^{p-2} + |x|^{p-3}|T| \right\} \leq C_p \left\{|T|^{p-2} + (|\xi| + |\eta|)^{p-3}|T| \right\}, \quad x \in [\xi,\eta].
\label{eq:borne4}
\end{equation}
Note that, contrary to the case $p < 3$, estimate~\eqref{eq:borne4} does not depend on $\delta$. 
This gives~\eqref{eq:accfinis2} and finally~\eqref{eq:eqZT_2}.
\end{proof}

\subsection{Proof of Theorem~\ref{th:coercivitypercor}}

We start this section with a Remark:
\begin{remarque} The proofs of Theorem~\ref{th:th_nonlin} (i) and~\ref{th:th_nonlin} (ii) below do not use Theorem~\ref{th:coercivitypercor}. Consequently, we may use freely the results of   Theorem~\ref{th:th_nonlin} (i) and~\ref{th:th_nonlin} (ii) in the following proof.
\label{re:theorem}
\end{remarque}

\begin{proof}
[Proof of Theorem~\ref{th:coercivitypercor}]
By homogeneity, we can prove Theorem~\ref{th:coercivitypercor} for all $\xi \in \R^d$ such that $|\xi| = 1$. We fix such a $\xi \in \R^d$. By \textbf{(A4)}, there exists a constant $c > 0$ independent of $\xi$ such that $|\xi + \nabla w_{\xi}^{\per}| \geq c$. In the proof, we introduce the notations
\begin{equation}
C_{\infty}^{\per} := \sup_{|\xi|=1}\|\xi + \nabla w_{\xi}^{\per} \|_{L^{\infty}(Q)} \quad \text{and} \quad C_{\infty} := \sup_{|\xi|=1}\|\nabla \widetilde{w_{\xi}} \|_{L^{\infty}(\R^d)},
\label{eq:notation-taylor}
\end{equation}
where these quantities are well-defined owing to Proposition~\ref{prop:periodique} (ii) and Theorem~\ref{th:th_nonlin} (ii).
We use the following Taylor inequality~\eqref{eq:taylor} for the function $y \longmapsto \big(\xi + \nabla w_{\xi}^{\per} + y \big) \big| \xi + \nabla w_{\xi}^{\per} + y \big|^{p-2}$ which is of class $\mathcal{C}^2$ over $B(0,3c/4)$. For all $y \in \R^d$, we have, using also~\eqref{eq:useful_ineq_3} when $|y| \geq c/2$,
\begin{equation}
\begin{aligned}
\bigg|\big(\xi & + \nabla w_{\xi}^{\per} + y \big) \big| \xi + \nabla w_{\xi}^{\per} + y \big|^{p-2} - (\xi + \nabla w_{\xi}^{\per}) \big| \xi + \nabla w_{\xi}^{\per} \big|^{p-2} 
\\& - \big\{|\xi + \nabla w_{\xi}^{\per}|^{p-2} \text{I} + (p-2)|\xi + \nabla w_{\xi}^{\per}|^{p-4} (\xi + \nabla w_{\xi}^{\per}) \otimes (\xi + \nabla w_{\xi}^{\per}) \big\} y \bigg|
\\& \leq C(p,c) |y|^2 1_{\{|y| \leq c/2\}} + C(p)\bigg\{ |\xi + \nabla w_{\xi}^{\per}|^{p-2}|y| + |y|^{p-1} \bigg\} 1_{\{|y| \geq c/2\}} \\
& \leq C(p,c) |y|^2 1_{\{|y| \leq c/2\}}  + C(p,c) \bigg\{ (C_{\infty}^{\per})^{p-2}|y|^2 + |y|^{\max(2,p-1)} \bigg\} 1_{\{|y| \geq c/2\}} \\
& \leq C(p,c,C_{\infty}^{\per})\big(|y|^{2} + |y|^{\max(2,p-1)}\big).
\end{aligned}
\label{eq:taylor}
\end{equation}
By~\eqref{eq:taylor} applied with $y = \nabla \widetilde{w_{\xi}}$, we can write
\begin{equation}
\begin{aligned}
\big(\xi & + \nabla w_{\xi}^{\per} + \nabla\widetilde{w_{\xi}} \big) \big| \xi + \nabla w_{\xi}^{\per} + \nabla\widetilde{w_{\xi}} \big|^{p-2} - (\xi + \nabla w_{\xi}^{\per}) \big| \xi + \nabla w_{\xi}^{\per} \big|^{p-2} 
\\& = \big[|\xi + \nabla w_{\xi}^{\per}|^{p-2} \text{I} + (p-2)|\xi + \nabla w_{\xi}^{\per}|^{p-4} (\xi + \nabla w_{\xi}^{\per}) \otimes (\xi + \nabla w_{\xi}^{\per}) \big] \nabla \widetilde{w_{\xi}} + g_{\xi}(\nabla \widetilde{w_{\xi}}),
\label{eq:lem3.4}
\end{aligned}
\end{equation}
where, using~\eqref{eq:notation-taylor}, 
\begin{equation}
\left|g_{\xi}(\nabla\widetilde{w_{\xi}})\right| \leq C \big(p,c,C_{\infty}^{\per},C_{\infty}\big) | \nabla\widetilde{w_{\xi}}|^{2},
\label{eq:gz}
\end{equation}
Thus, collecting \eqref{eq:lem3.4} and~\eqref{eq:th2.2_gen}, we get that $\nabla \widetilde{w_{\xi}}$ solves
\begin{equation}\begin{aligned} 
-\di \ a \big[|\xi + \nabla w_{\xi}^{\per}|^{p-2} \text{I} + (p-2)|\xi + \nabla w_{\xi}^{\per}|^{p-4} (\xi + \nabla w_{\xi}^{\per}) & \otimes (\xi + \nabla w_{\xi}^{\per}) \big] \nabla \widetilde{w_{\xi}} \\ &= \di(h) + \di(a g_{\xi}(\nabla \widetilde{w_{\xi}}))
\end{aligned}
\label{eq:BLL}
\end{equation}
in the distribution sense. Equation \eqref{eq:BLL} is of the form
\begin{equation}
-\di \left(A_{\xi}\ \nabla \widetilde{w_{\xi}} \right) = \di(h) + \di(a g_{\xi}(\nabla \widetilde{w_{\xi}})), 
\label{eq:BLL2}
\end{equation}
where
\begin{equation} A_{\xi} := a \left(|\xi + \nabla w_{\xi}^{\per}|^{p-2} \text{I} + (p-2)|\xi + \nabla w_{\xi}^{\per}|^{p-4} (\xi + \nabla w_{\xi}^{\per})\otimes (\xi + \nabla w_{\xi}^{\per})\right).
\end{equation}
We may write that
$A_{\xi} = A_{\xi}^{\per} + \widetilde{A_{\xi}}$,
where 
$$A_{\xi}^{\per} := a^{\per } \left(|\xi + \nabla w_{\xi}^{\per}|^{p-2} \text{I} + (p-2)|\xi + \nabla w_{\xi}^{\per}|^{p-4} (\xi + \nabla w_{\xi}^{\per}) \otimes (\xi + \nabla w_{\xi}^{\per}) \right)$$
and
$$\widetilde{A_{\xi}} := \widetilde{a} \left(|\xi + \nabla w_{\xi}^{\per}|^{p-2} \text{I} + (p-2)|\xi + \nabla w_{\xi}^{\per}|^{p-4} (\xi + \nabla w_{\xi}^{\per}) \otimes (\xi + \nabla w_{\xi}^{\per}) \right).$$
The matrix $A_{\xi}^{\per}$ is symmetric, periodic, H\"{o}lder continuous, bounded and coercive while the matrix $\widetilde{A_{\xi}} \in L^{p'} \cap L^{\infty}(\R^d)^{d \times d}$ by Assumption \textbf{(A3)}, in particular $\widetilde{A_{\xi}} \nabla \widetilde{w_{\xi}} \in L^{p'} \cap L^{\infty}(\R^d)^d$ due to Proposition~\ref{prop:periodique} (ii) and Theorem~\ref{th:th_nonlin} (ii). We write equation \eqref{eq:BLL2} as
\begin{equation}
-\di \left(A_{\xi}^{\mathrm{per}}\ \nabla \widetilde{w_{\xi}} \right) = \di(h + a g_{\xi}(\nabla \widetilde{w_{\xi}}) + \widetilde{A_{\xi}} \nabla \widetilde{w_{\xi}}).
\label{eq:BLL1}
\end{equation} 
We have that $h \in L^{p'} \cap L^{\infty}(\R^d)$ and, thanks to the estimate~\eqref{eq:gz} and the fact that $\nabla \widetilde{w_{\xi}} \in L^p(\R^d)$, that $a g_{\xi}(\nabla \widetilde{w_{\xi}}) \in L^{p/2} \cap L^{\infty}(\R^d)$. Thus
$$h + a g_{\xi}(\nabla \widetilde{w_{\xi}}) + \widetilde{A_{\xi}} \nabla\widetilde{w_{\xi}} \in \left(L^{\max(p',p/2)} \cap L^{\infty}(\R^d)\right)^d.$$
Applying~\cite[Theorem p. 247]{ALliouville} and~\cite[Theorem A]{avellaneda1991lp} to~\eqref{eq:BLL1} gives $\nabla \widetilde{w_{\xi}} \in L^{\max(p',p/2)}(\R^d)$ with the estimate
\begin{equation}
\begin{aligned}
\|\nabla \widetilde{w_{\xi}}\|_{L^{\max(p',p/2)}(\R^d)} & \leq C(d,p,c,C_{\infty}^{\per},\alpha)\big\|h + a g_{\xi}(\nabla \widetilde{w_{\xi}}) + \widetilde{A_{\xi}} \nabla\widetilde{w_{\xi}}\big\|_{L^{\max(p',p/2)}(\R^d)}
\\ & \underset{\eqref{eq:gz}}{\leq}C \big(d,p,c,C_{\infty}^{\per},C_{\infty},\lambda\big)\big( \|\widetilde{a}\|_{L^{\max(p',p/2)}(\R^d)} (C^{\per}_{\infty})^{p-1} +  \big\||\nabla \widetilde{w_{\xi}}|^2 \big\|_{L^{\max(p',p/2)}(\R^d)} \\
& \quad \quad \quad +  \|\widetilde{a}\|_{L^{\max(p',p/2)}(\R^d)} (C^{\per}_{\infty})^{p-2} \|\nabla \widetilde{w_{\xi}}\|_{L^{\infty}(\R^d)} \big) \\
& \leq C\big(\|\widetilde{a}\|_{L^{p'}(\R^d)},\lambda,d,p,\alpha,c,C_{\infty}^{\per},C_{\infty},C_p \big),
\end{aligned} 
\label{eq:abaisserlareg}
\end{equation}
where $C_p = \sup_{|\xi|=1} \|\nabla \widetilde{w_{\xi}}\|_{L^p(\R^d)}$.
If $p' \geq p/2$, Theorem~\ref{th:coercivitypercor} is proved. 
Otherwise, $\nabla\widetilde{w_{\xi}} \in L^{p/2}(\R^d)^d$ and we iterate the argument. We have, thanks to~\eqref{eq:gz}, that
 $$h + a g_{\xi}(\nabla\widetilde{w_{\xi}}) + \widetilde{A_{\xi}} \nabla \widetilde{w_{\xi}} \in \left(L^{\max(p',p/4)}\cap L^{\infty}(\R^d)\right)^d,$$
 thus by \cite{ALliouville}, we get that $\nabla \widetilde{w_{\xi}} \in L^{\max(p',p/4)}(\R^d)^d$ and we can prove, similarly to~\eqref{eq:abaisserlareg} that
 \begin{equation}
 \|\nabla \widetilde{w_{\xi}}\|_{L^{\max(p',p/4)}(\R^d)} \leq C\big(\|\widetilde{a}\|_{L^{p'}(\R^d)},\lambda,d,p,\alpha,c,C_{\infty}^{\per},C_{\infty},C_p \big),
 \label{eq:abaisserlareg_2}
 \end{equation}
 where the constant on the right-hand side of~\eqref{eq:abaisserlareg_2} is potentially greater than the one on the right-hand side of~\eqref{eq:abaisserlareg} but the dependance on the data remains the same. If $p' \geq p/4$, the Theorem is proved. 
Otherwise, we iterate similarly. The procedure ends at step $k$ for which $p/2^k \leq p'$: we thus obtain that $\nabla \widetilde{w_{\xi}} \in L^{p'}(\R^d)^d$ and that there exists a constant $C_{final} := C\big(\widetilde{a},\lambda,d,p,\alpha,c,C_{\infty}^{\per},C_{\infty},C_p  \big)$ such that $$\|\nabla \widetilde{w_{\xi}}\|_{L^{p'}(\R^d)} \leq C_{final}.$$ Theorem~\ref{th:coercivitypercor} is proved. 
\end{proof}

\subsection{Proof of Theorem~\ref{th:th_nonlin}}
\label{subsect:th}

\paragraph*{Proof of \textit{(i)}.} This is due to Proposition~\ref{prop:periodique} (i), to the form of the PDE~\eqref{eq:th2.2_gen2}-\eqref{eq:f_2} defining $\nabla\widetilde{w_{\xi}}$ and the fact that this PDE is uniquely solvable in the sense of Definition~\ref{def:def}. Note that we use that for $t \neq 0$, $W_{\xi + \nabla w_{\xi}^{\per}} = W_{t\xi + \nabla w_{t\xi}^{\per}}$.

\paragraph*{Proof of \textit{(ii)}.} This result is proved in~\cite[Lemma 2.2]{wang2019convergence} but we reproduce the proof here for the sake of completeness. Let $\xi \in \RR^d$. By Definition~\ref{def:def} with $\nabla \phi = \nabla \widetilde{w_{\xi}}$, the inequality~\eqref{eq:useful_ineq_1},  H\"{o}lder inequality together with~\eqref{eq:f}, we have
\begin{equation}
c \int_{\R^d} |\nabla \widetilde{w_{\xi}}|^p \leq \|f\|_{L^{p'}(\R^d)} \|\nabla \widetilde{w_{\xi}}\|_{L^p(\R^d)} \leq \|\widetilde{a}\|_{L^{p'}(\R^d)}\|\xi + \nabla w_{\xi}^{\per}\|_{L^{\infty}(Q)}^{p-1}\|\nabla \widetilde{w_{\xi}}\|_{L^p(\R^d)}.
\label{eq:th2.3(ii)}
\end{equation}
Thus, by Proposition~\ref{prop:periodique} (ii) and \eqref{eq:th2.3(ii)}, we obtain the first estimate of~\eqref{eq:estim_holder_per}. 

\medskip

We show that there exists $\alpha > 0$ independent of $\xi$ such that $\nabla w_{\xi} \in \mathcal{C}^{0,\alpha}(\R^d)$. We introduce the function $\overline{w_{\xi}} := \xi \cdot x + w_{\xi}$, then $\nabla \overline{w_{\xi}}$ solves the standard homogeneous $p-$Laplace equation with varying coefficient $a$. Applying \cite[Theorem~1]{kuusi2014nonlinear}, we get that $\nabla \overline{w_{\xi}}$ is continuous over $\R^d$. Besides, by \cite[Theorem~4]{kuusi2014nonlinear}, there exists a constant $c \geq 1$ and a radius $r > 0$ depending only on $d$, $p$, $\lambda$ and the Lipschitz constant of $a$, denoted $a_{Lip}$ such that for all $x \in \R^d$, 
\begin{equation}
|\nabla \overline{w_{\xi}}(x)| \leq c \left(\fint_{B(x,r)} |\nabla \overline{w_{\xi}}|^{p'} \right)^{1/p'} \leq c\left(\fint_{B(x,r)} |\nabla \overline{w_{\xi}}|^{p} \right)^{1/p}.
\end{equation}
Due to the form of $\nabla \overline{w_{\xi}}$, see also~\eqref{eq:w_xi} and~the first estimate of~\eqref{eq:estim_holder_per}, we have that 
\begin{equation}
|\nabla \overline{w_{\xi}}(x)| \leq c |\xi| + c r^{-d/p} \|\nabla \widetilde{w_{\xi}} \|_{L^p(\R^d)} \leq C(d,p,\lambda,a_{Lip})|\xi|.
\label{eq:th2.3(ii)_2}
\end{equation}
In particular,~\eqref{eq:th2.3(ii)_2} proves that $\nabla w_{\xi}$ is bounded and that $\|\nabla w_{\xi}\|_{L^{\infty}(\R^d)} \leq C(d,p,\lambda,a_{Lip})|\xi|$. 
By Assumption~\textbf{(A2)}, the non-linear operator $a(y,z)=a(y)z|z|^{p-2}$ falls into the scope of~\cite{dibenedetto1982}. Let $x \in \R^d$, up to subtracting of $\overline{w_{\xi}}(x)$, we have by~\eqref{eq:th2.3(ii)_2} that $|\overline{w_{\xi}}| \leq C(d,p,\lambda,a_{Lip})|\xi|$ on $B(x,2)$. Thus, applying~\cite[Theorem~2]{dibenedetto1982}, there exist $\alpha > 0$ and $C_0 >0$ depending only on $\lambda, a_{Lip}, p, d, p,$ and $C(d,p,\lambda,a_{Lip})|\xi|$ such that $\nabla \overline{w_{\xi}} \in \mathcal{C}^{0,\alpha}(B(x,1))$ and 
\begin{equation}
[\nabla \overline{w_{\xi}}]_{\mathcal{C}^{0,\alpha}(B(x,1))} \leq C_0.
\label{eq:th2.3(ii)_3}
\end{equation}
To specify the dependence of $C_0$ in $\xi$, we first take $|\xi|=1$ and we then apply the homogeneity, Theorem~\ref{th:th_nonlin} (i). This gives that $C_0 = C_0(p,d,\lambda,a_{Lip})|\xi|$ and concludes the proof of (ii), gathering \eqref{eq:th2.3(ii)_2} and~\eqref{eq:th2.3(ii)_3} and the fact that $$\|\nabla \widetilde{w_{\xi}} \|_{\mathcal{C}^{0,\alpha}(\R^d)} \leq |\xi| + \|\nabla \overline{w_{\xi}} \|_{\mathcal{C}^{0,\alpha}(\R^d)}.$$

\paragraph*{Proof of \textit{(iii)}.}
We assume that $2 \leq p < 3$. Let us fix $\xi \in \R^d$ such that $|\xi| = 1$. In the proof, $c > 0$ will denote a universal constant given by~\textbf{(A4)}. We consider $\eta \in \R^d$ such that $\xi \neq \eta$. In the sequel, we fix $\delta_0 \in (0,1)$ such that 
$C ( 1 + 2^{1-\gamma})\delta_0^{\gamma} + \delta_0 \leq c/2$, where $C$ and $\gamma$ are given by~\eqref{eq:continuité_Linfty}.

\medskip

\noindent\underline{Case 1}. We assume that $|\xi - \eta| \geq \delta_0$. Then, thanks to Theorem~\ref{th:th_nonlin} (ii), we have that 
\begin{equation}
\|\nabla \widetilde{w_{\xi}} - \nabla \widetilde{w_{\eta}} \|_{L^p(\R^d)} \leq C_p + C_p|\eta|. 
\end{equation}
We now note that for all $0 < \widetilde{\beta} \leq 1$,
\begin{equation}
C_p + C_p|\eta| \leq \begin{cases}
\begin{aligned}
 \big( \frac{C_p}{\delta_0^{\widetilde{\beta}}}\big)|\xi - \eta|^{\widetilde{\beta}}(1+|\eta|) \leq 2^{\widetilde{\beta}}\big( \frac{C_p}{\delta_0^{\widetilde{\beta}}}\big)|\xi - \eta|^{\widetilde{\beta}}(1+|\eta|^{1-\widetilde{\beta}}) \quad &\text{if} \quad |\eta| \leq 2. \\
C_pC(\widetilde{\beta})\big||\eta| - 1\big|^{\widetilde{\beta}} (1 + |\eta|^{1 - \widetilde{\beta}}) \leq C_pC(\widetilde{\beta}) |\xi - \eta|^{\widetilde{\beta}}(1+|\eta|^{1-\widetilde{\beta}}) \quad &\text{if} \quad |\eta| > 2,
\end{aligned}
\end{cases}
\label{eq:homogeneite_3}
\end{equation}
where we used that the function $x \mapsto \frac{1+x}{|x - 1|^{\widetilde{\beta}} (1 + x^{1 - \widetilde{\beta}})}$ is bounded on $[2,+\infty[$.
Thus 
\begin{equation}
\|\nabla \widetilde{w_{\xi}} - \nabla \widetilde{w_{\eta}} \|_{L^p(\R^d)} \leq C(\delta_0,\widetilde{\beta},C_p) |\xi - \eta|^{\widetilde{\beta}}(1+|\eta|^{1-\widetilde{\beta}}).
\label{eq:final}
\end{equation}
This gives~\eqref{eq:cont}.

\medskip

\noindent\underline{Case 2}. We assume that $|\xi - \eta| < \delta_0$. Then, by the choice of $\delta_0$ and Proposition~\ref{prop:periodique} (iv), we have that
\begin{equation}
\big\|\xi + \nabla w_{\xi}^{\per} - \big\{\eta + \nabla w_{\eta}^{\per}  \big\} \big\|_{L^{\infty}(Q)} \leq \frac{c}{2} \quad \text{and} \quad |\xi + \nabla w_{\xi}^{\per}| \geq c.
\label{eq:lem4.8_(3)}
\end{equation}
Recalling the notation~\eqref{eq:F_u}, we have that
\begin{equation}
F_{\xi}(\nabla \widetilde{w_{\xi}}) + F_{\eta}(\nabla \widetilde{w_{\eta}}) < F_{\xi} \left(\frac{\nabla \widetilde{w_{\xi}} + \nabla\widetilde{w_{\eta}}}{2} \right) + F_{\eta} \left(\frac{\nabla \widetilde{w_{\xi}} + \nabla\widetilde{w_{\eta}}}{2} \right) < +\infty,
\label{eq:milieu}
\end{equation}
where we have used that $\xi \neq \eta$, $F_z$ admits a unique minimizer for $z \in \R^d$ and $\nabla \widetilde{w_{\xi}} \in L^2(\R^d)$, $\nabla \widetilde{w_{\eta}} \in L^2(\R^d)$.
We recall that 
\begin{equation}
F_{z}^R(\nabla v) := \int_{B_R} a g_{z + \nabla w_z^{\per}}(\nabla v) + \int_{\R^d} f_z \cdot \nabla v, \quad z \in \R^d, \quad \nabla v \in L^p(\R^d)
\label{eq:notationF_r}
\end{equation}
and that $R \longmapsto F^R_{z}(\nabla v)$ is a non-decreasing function.
Thus, for $R$ large enough, we have the inequality
\begin{equation}
F_{\xi}^R(\nabla \widetilde{w_{\xi}}) + F_{\eta}^R(\nabla \widetilde{w_{\eta}}) - F_{\xi}^R \left(\frac{\nabla \widetilde{w_{\xi}} + \nabla\widetilde{w_{\eta}}}{2} \right) - F_{\eta}^R \left(\frac{\nabla \widetilde{w_{\xi}} + \nabla\widetilde{w_{\eta}}}{2} \right) \leq 0.
\label{eq:milieu2}
\end{equation}
We now use Lemma~\ref{lem:lemcont} applied with $\delta = c$. Taking into account~\eqref{eq:lem4.8_(3)}, this gives
\begin{equation}
\begin{aligned}
G_{\xi+\nabla w_{\xi}^{\per},\eta + \nabla w_{\eta}^{\per}}(\nabla \widetilde{w_{\xi}},&\nabla\widetilde{w_{\eta}}) \geq \gamma_p |\nabla \widetilde{w_{\xi}}-\nabla \widetilde{w_{\eta}}|^p - c_p \bigg\{|\xi+\nabla w_{\xi}^{\per} - (\eta + \nabla w_{\eta}^{\per})|^{p-2} | |\nabla \widetilde{w_{\xi}}-\nabla \widetilde{w_{\eta}}| \\
&  \quad + c^{p-3} |\xi+\nabla w_{\xi}^{\per} - (\eta + \nabla w_{\eta}^{\per})||\nabla \widetilde{w_{\xi}}+\nabla \widetilde{w_{\eta}}|\bigg\}|\nabla \widetilde{w_{\xi}}-\nabla \widetilde{w_{\eta}}|.
\end{aligned}
\label{lem:lem4.8}
\end{equation}
For all $R > 0$, we can integrate~\eqref{lem:lem4.8} over the ball $B_R$. Using the notation~\eqref{eq:notationF_r} and the form of the map $G_{\xi,\eta}(X,Y)$, see~\eqref{eq:defcont}, this yields
\begin{equation}
\begin{aligned}
F_{\xi}^R(\nabla \widetilde{w_{\xi}}) & + F_{\eta}^R(\nabla \widetilde{w_{\eta}}) - F_{\xi}^R \left(\frac{\nabla \widetilde{w_{\xi}} + \nabla\widetilde{w_{\eta}}}{2} \right) - F_{\eta}^R \left(\frac{\nabla \widetilde{w_{\xi}} + \nabla\widetilde{w_{\eta}}}{2} \right) - \frac{1}{2} \int_{\R^d} (h_{\xi} - h_{\eta})\cdot(\nabla \widetilde{w_{\xi}} - \nabla \widetilde{w_{\eta}}) \\
& \geq \gamma_p \int_{B_R} a|\nabla \widetilde{w_{\xi}}-\nabla \widetilde{w_{\eta}}|^p - c_p \int_{B_R} a \bigg\{|\xi+\nabla w_{\xi}^{\per} - (\eta + \nabla w_{\eta}^{\per})|^{p-2} | |\nabla \widetilde{w_{\xi}}-\nabla \widetilde{w_{\eta}}| \\ 
& \quad \quad \ \ \ \ \ \ \ \ \ \ \ \ \ \ \ \  + c^{p-3} |\xi+\nabla w_{\xi}^{\per} - (\eta + \nabla w_{\eta}^{\per})||\nabla \widetilde{w_{\xi}}+\nabla \widetilde{w_{\eta}}|\bigg\}|\nabla \widetilde{w_{\xi}}-\nabla \widetilde{w_{\eta}}|,
\label{eq:lem4/8_(2)}
\end{aligned}
\end{equation}
where $h_z = \widetilde{a}(z + \nabla w_{z}^{\per})|z + \nabla w_z^{\per}|^{p-2}$ for $z \in \R^d$. 
For $R$ large enough, we get because of~\eqref{eq:milieu2} that
\begin{equation}
\begin{aligned}
-\frac{1}{2} &\int_{\R^d}  (h_{\xi} - h_{\eta})\cdot(\nabla \widetilde{w_{\xi}} - \nabla \widetilde{w_{\eta}}) \geq \gamma_p \int_{B_R} a|\nabla \widetilde{w_{\xi}}-\nabla \widetilde{w_{\eta}}|^p - c_p \int_{B_R} a \bigg\{|\xi+\nabla w_{\xi}^{\per} - (\eta + \nabla w_{\eta}^{\per})|^{p-2}\cdot \\&|\nabla \widetilde{w_{\xi}}-\nabla \widetilde{w_{\eta}}| +c^{p-3} |\xi+\nabla w_{\xi}^{\per} - (\eta + \nabla w_{\eta}^{\per})||\nabla \widetilde{w_{\xi}}+\nabla \widetilde{w_{\eta}}|\bigg\}|\nabla \widetilde{w_{\xi}}-\nabla \widetilde{w_{\eta}}|,
\label{eq:lem4/8_(2)}
\end{aligned}
\end{equation}
Letting $R\longrightarrow +\infty$ in~\eqref{eq:lem4/8_(2)} and using Theorem~\ref{th:coercivitypercor}, we get by the monotone convergence Theorem that
$$\begin{aligned}
-\frac{1}{2} \int_{\R^d} (f_{\xi} - f_{\eta})\cdot&(\nabla \widetilde{w_{\xi}} - \nabla \widetilde{w_{\eta}}) \geq \gamma_p \int_{\R^d} |\nabla \widetilde{w_{\xi}}-\nabla \widetilde{w_{\eta}}|^p  - c_p \int_{\R^d} \bigg\{|\xi+\nabla w_{\xi}^{\per} - (\eta + \nabla w_{\eta}^{\per})|^{p-2} \cdot \\& |\nabla \widetilde{w_{\xi}}-\nabla \widetilde{w_{\eta}}| + c^{p-3} |\xi+\nabla w_{\xi}^{\per} - (\eta + \nabla w_{\eta}^{\per})||\nabla \widetilde{w_{\xi}}+\nabla \widetilde{w_{\eta}}|\bigg\}|\nabla \widetilde{w_{\xi}}-\nabla \widetilde{w_{\eta}}|.
\end{aligned}$$
Thus, applying the H\"{o}lder inequality, Proposition~\ref{prop:periodique} (iv) and Theorem~\ref{th:coercivitypercor} under the form $$\|\nabla \widetilde{w_z} \|_{L^{p'}(\R^d)} \leq C|z|, \quad z \in \R^d,$$ we get
$$\begin{aligned}\int_{\R^d} |\nabla \widetilde{w_{\xi}}-\nabla \widetilde{w_{\eta}}|^p & \leq C\bigg(\|\widetilde{a}\|_{L^{p'}}|\xi - \eta|^{\gamma}\|\nabla \widetilde{w_{\xi}}- \nabla \widetilde{w_{\eta}}\|_{L^p(\R^d)} + \\ &c_p \big\{ C|\xi - \eta|^{\gamma(p-2)} + c^{p-3}C|\xi-\eta|^{\gamma} \big\}\|\nabla \widetilde{w_{\xi}}- \nabla \widetilde{w_{\eta}}\|_{L^p}\bigg).
\end{aligned}$$
Thus
$$\|\nabla \widetilde{w_{\xi}}- \nabla \widetilde{w_{\eta}}\|_{L^p(\R^d)}^{p-1} \leq C|\xi - \eta|^{\gamma(p-2)}.$$
This gives~\eqref{eq:cont} when $|\xi| = 1$. The case $|\xi| \neq 1$ is treated by homogeneity. 

\medskip

Gathering Case 1 and Case 2, we have proved Theorem~\ref{th:th_nonlin} (iii) for $p \in [2,3)$. The proof of the case $p\geq 3$ is performed using the same method and~\eqref{eq:lemcont2}.

\begin{remarque}
As suggested by~\eqref{eq:lemcont2}, the assumptions of Theorem~\ref{th:th_nonlin}(iii) may be weakened when $p \geq 3$. In this case, it is sufficient to assume, instead of \textbf{(A4)}, that $\nabla \widetilde{w_{\xi}} \in L^{p'}(\R^d)$ , that $\nabla w_{\eta} \in L^{p'}(\R^d)$ and that we have an estimate of the form~\eqref{th:estimLp_prime}.
\end{remarque}

\paragraph*{Proof of \textit{(iv)}.} It is analogous to the proof of Proposition~\ref{prop:periodique} (iv).

\section[Qualitative homogenization]{Qualitative Homogenization: proof of Theorem~\ref{th:homog_qualititative}}
\label{sect:qual}

The proof of Theorem~\ref{th:homog_qualititative} is an adaptation of \cite{fusco1986homogenization} and \cite[Theorem~2.1]{dal1990correctors} to the present setting. We start with the following central~Lemma:

\begin{lemme} For $\xi \in \R^d$, let us write $\nabla \widetilde{w_{\xi}}$ the solution to~\eqref{eq:th2.2_gen2}-\eqref{eq:f_2} given by Theorem~\ref{th:existencecor}. Assume that the application 
\begin{equation} 
\begin{cases} \begin{aligned}
\R^d & \longrightarrow L^p_{\text{unif}}(\R^d) \\
\xi & \longmapsto \nabla \widetilde{w_{\xi}} \\
\end{aligned} \end{cases}
\label{eq:continuite_lemmecvremainder}
\end{equation}
is continuous. Then for all $\Psi \in L^p(\Omega)^d$,
\begin{equation}
\limsup_{\varepsilon \rightarrow 0} \int_{\Omega} \left|\nabla \widetilde{w_{M_{\varepsilon} \Psi}} \big( \frac{\cdot}{\varepsilon} \big) \right|^p = 0.
\label{eq:cclcv_remainder}
\end{equation}
\label{lem:cv_remainder}
\end{lemme}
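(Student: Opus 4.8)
The plan is to establish~\eqref{eq:cclcv_remainder} first on elementary data (constant, then piecewise constant on boxes) and then to pass to an arbitrary $\Psi\in L^{p}(\Omega)^{d}$ by density, the whole argument resting on the gain of a factor $\varepsilon^{d}$ produced by the rescaling together with the fact that $\nabla\widetilde{w_{\xi}}\in L^{p}(\R^{d})$. For $\varepsilon>0$ I write $\xi_{k}^{\varepsilon}:=\fint_{\varepsilon(Q+k)}\Psi$ for the admissible indices $k$ (those with $\varepsilon(Q+k)\subset\Omega$), so that $M_{\varepsilon}\Psi$ equals $\xi_{k}^{\varepsilon}$ on the pairwise disjoint cubes $\varepsilon(Q+k)$ and vanishes elsewhere; after the change of variables $x=\varepsilon y$ this gives
\[
\int_{\Omega}\Bigl|\nabla\widetilde{w_{M_{\varepsilon}\Psi}}\bigl(\tfrac{\cdot}{\varepsilon}\bigr)\Bigr|^{p}
=\varepsilon^{d}\sum_{k}\int_{k+Q}\bigl|\nabla\widetilde{w_{\xi_{k}^{\varepsilon}}}\bigr|^{p}
\le\varepsilon^{d}\sum_{k}\bigl\|\nabla\widetilde{w_{\xi_{k}^{\varepsilon}}}\bigr\|_{L^{p}_{unif}(\R^{d})}^{p}
\le C\,\varepsilon^{d}\sum_{k}|\xi_{k}^{\varepsilon}|^{p}\le C\|\Psi\|_{L^{p}(\Omega)}^{p},
\]
where I use the bound $\|\nabla\widetilde{w_{\xi}}\|_{L^{p}_{unif}(\R^{d})}\le C|\xi|$ — a consequence of Theorem~\ref{th:th_nonlin}~(ii) and Proposition~\ref{prop:periodique}~(ii) applied to $\nabla\widetilde{w_{\xi}}=\nabla w_{\xi}-\nabla w_{\xi}^{\per}$ — and Jensen's inequality. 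This already shows that $\Psi\mapsto\nabla\widetilde{w_{M_{\varepsilon}\Psi}}(\cdot/\varepsilon)$ is bounded on $L^{p}(\Omega)^{d}$ uniformly in $\varepsilon$.

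First, for $\Psi\equiv\xi$ constant the displayed line gives directly $\int_{\Omega}|\nabla\widetilde{w_{M_{\varepsilon}\Psi}}(\cdot/\varepsilon)|^{p}\le\varepsilon^{d}\|\nabla\widetilde{w_{\xi}}\|_{L^{p}(\R^{d})}^{p}\to0$, since $\nabla\widetilde{w_{\xi}}\in L^{p}(\R^{d})$ by Theorem~\ref{th:existencecor} and~\eqref{eq:Wu}. Next, for $\Psi=\sum_{j=1}^{N}\xi_{j}\11_{C_{j}}$ with $C_{1},\dots,C_{N}$ disjoint boxes covering $\Omega$, I would split the admissible cubes into those contained in a single $C_{j}$, on which $\xi_{k}^{\varepsilon}=\xi_{j}$, and those meeting $\bigcup_{j}\partial C_{j}$. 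The first family contributes at most $\sum_{j}\varepsilon^{d}\|\nabla\widetilde{w_{\xi_{j}}}\|_{L^{p}(\R^{d})}^{p}\to0$ (the cubes being disjoint). The second family is supported in a set of measure $O(\varepsilon)$ and, using the $L^{\infty}$ bound $\|\nabla\widetilde{w_{\eta}}\|_{L^{\infty}(\R^{d})}\le C|\eta|$ (again Theorem~\ref{th:th_nonlin}~(ii) and Proposition~\ref{prop:periodique}~(ii)) together with $|\xi_{k}^{\varepsilon}|\le\max_{j}|\xi_{j}|$, contributes at most $C(\max_{j}|\xi_{j}|)^{p}\,O(\varepsilon)\to0$. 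Hence~\eqref{eq:cclcv_remainder} holds for all such piecewise constant $\Psi$, and note that the continuity hypothesis has not been used yet.

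It remains to pass from this dense class to a general $\Psi\in L^{p}(\Omega)^{d}$, and this stability step is where the continuity hypothesis enters and where the main difficulty lies. For $\Phi$ of the previous type, with $\eta_{k}^{\varepsilon}:=\fint_{\varepsilon(Q+k)}\Phi$, one has
\[
\Bigl\|\nabla\widetilde{w_{M_{\varepsilon}\Psi}}\bigl(\tfrac{\cdot}{\varepsilon}\bigr)-\nabla\widetilde{w_{M_{\varepsilon}\Phi}}\bigl(\tfrac{\cdot}{\varepsilon}\bigr)\Bigr\|_{L^{p}(\Omega)}^{p}
=\varepsilon^{d}\sum_{k}\bigl\|\nabla\widetilde{w_{\xi_{k}^{\varepsilon}}}-\nabla\widetilde{w_{\eta_{k}^{\varepsilon}}}\bigr\|_{L^{p}(k+Q)}^{p}
\le\varepsilon^{d}\sum_{k}\bigl\|\nabla\widetilde{w_{\xi_{k}^{\varepsilon}}}-\nabla\widetilde{w_{\eta_{k}^{\varepsilon}}}\bigr\|_{L^{p}_{unif}(\R^{d})}^{p}.
\]
Because $M_{\varepsilon}\Psi$ is not uniformly bounded, a single modulus of continuity does not suffice; instead I would split the indices into a ``good'' set $G=\{k:\ |\xi_{k}^{\varepsilon}|\le M,\ |\xi_{k}^{\varepsilon}-\eta_{k}^{\varepsilon}|\le\rho\}$ and its complement $B$. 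On $G$ all the vectors involved lie in the fixed compact ball $\overline{B_{M+\rho}}$, on which $\xi\mapsto\nabla\widetilde{w_{\xi}}$ is uniformly continuous into $L^{p}_{unif}(\R^{d})$ (this is exactly where the assumed continuity, combined with Heine--Cantor, is used), so the contribution of $G$ is at most $\omega_{M,\rho}(\rho)^{p}\,|\Omega|$ for a modulus $\omega_{M,\rho}$. On $B$ I estimate, using $\|\nabla\widetilde{w_{\xi_{k}^{\varepsilon}}}-\nabla\widetilde{w_{\eta_{k}^{\varepsilon}}}\|_{L^{p}_{unif}}\le C(|\xi_{k}^{\varepsilon}|+|\eta_{k}^{\varepsilon}|)$, $|\eta_{k}^{\varepsilon}|^{p}\le C(|\xi_{k}^{\varepsilon}|^{p}+|\xi_{k}^{\varepsilon}-\eta_{k}^{\varepsilon}|^{p})$ and $\sum_{k}\varepsilon^{d}|\xi_{k}^{\varepsilon}-\eta_{k}^{\varepsilon}|^{p}=\|M_{\varepsilon}(\Psi-\Phi)\|_{L^{p}(\Omega)}^{p}\le\|\Psi-\Phi\|_{L^{p}(\Omega)}^{p}$, that the contribution of $B$ is controlled by $C\int_{\{|M_{\varepsilon}\Psi|>M\}}|M_{\varepsilon}\Psi|^{p}+C(M^{p}\rho^{-p}+1)\|\Psi-\Phi\|_{L^{p}(\Omega)}^{p}$. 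Since $M_{\varepsilon}\Psi\to\Psi$ in $L^{p}(\Omega)$, the family $\{|M_{\varepsilon}\Psi|^{p}\}_{\varepsilon\in(0,1]}$ is equi-integrable, so the first term is $\le\tau$ uniformly in $\varepsilon$ once $M$ is large; then one picks $\rho$ small so that $\omega_{M,\rho}(\rho)^{p}|\Omega|\le\tau$, and finally $\Phi$ close enough to $\Psi$ in $L^{p}(\Omega)$ to absorb the remaining term. This bounds the displayed difference by $C\tau$ for every $\varepsilon$, and since $\limsup_{\varepsilon\to0}\|\nabla\widetilde{w_{M_{\varepsilon}\Phi}}(\cdot/\varepsilon)\|_{L^{p}(\Omega)}=0$ by the previous step, the triangle inequality gives $\limsup_{\varepsilon\to0}\|\nabla\widetilde{w_{M_{\varepsilon}\Psi}}(\cdot/\varepsilon)\|_{L^{p}(\Omega)}\le C\tau^{1/p}$; letting $\tau\to0$ yields~\eqref{eq:cclcv_remainder}.
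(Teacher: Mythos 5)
Your proof is correct, but it follows a genuinely different route from the paper's. The paper first upgrades the continuity hypothesis, via homogeneity (to reduce to $|\xi|=1$) and compactness of the unit sphere, into a decay estimate that is uniform in the direction: for every $\delta>0$ there is $A>0$ with $\|\nabla\widetilde{w_{\xi}}\|_{L^p(x+Q)}\leq\delta|\xi|$ for all $|x|\geq A$ and all $\xi$; it then splits the sum over cells into $|k|<A$ (controlled by $C\int_{B_\infty(0,\varepsilon A)\cap\Omega}|\Psi|^p\to 0$) and $|k|\geq A$ (controlled by $\delta^p\|\Psi\|_{L^p}^p$). You instead prove the result directly for piecewise-constant data — where only $\nabla\widetilde{w_{\xi}}\in L^p(\R^d)$ and the scaling factor $\varepsilon^d$ are needed, not continuity — and then pass to general $\Psi$ by a density/stability argument in which continuity enters through Heine--Cantor on a fixed ball, with a truncation at level $M$ and the equi-integrability of $\{|M_\varepsilon\Psi|^p\}$ handling the unboundedness of $M_\varepsilon\Psi$. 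The paper's argument is shorter and gets the uniformity "for free" from homogeneity; yours isolates more precisely where continuity is used and does not invoke the homogeneity of $\xi\mapsto\nabla\widetilde{w_{\xi}}$ in the key step, at the cost of a longer three-parameter bookkeeping ($M$, then $\rho$, then $\Phi$). Two cosmetic points: fix the ball as $\overline{B_{M+1}}$ and restrict to $\rho\leq 1$ so that the modulus of continuity $\omega_M$ does not itself depend on $\rho$; and note that the uniform smallness of $\int_{\{|M_\varepsilon\Psi|>M\}}|M_\varepsilon\Psi|^p$ is only needed (and only cleanly available) for $\varepsilon$ small, which suffices for the $\limsup$.
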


\begin{proof}[Proof of Lemma~\ref{lem:cv_remainder}] We first show the following assertion:
\begin{equation}
\forall\delta >0,\quad \exists A > 0, \quad \forall |x| \geq A, \quad \forall\xi \in \R^d, \quad \|\nabla \widetilde{w_{\xi}} \|_{L^p(x+Q)} \leq \delta |\xi|.
\label{eq:lem_cvremainder}
\end{equation} 
By contradiction, if~\eqref{eq:lem_cvremainder} does not hold, there exists $\delta>0$ and two sequences $(x_n)_{n \in \NN} \subset \R^d$ and $(\xi_n)_{n \in \NN} \subset \R^d$ such that $|x_n| \underset{n \rightarrow +\infty}{\longrightarrow} + \infty$ and $\|\nabla \widetilde{w_{\xi_n}} \|_{L^p(x_n+Q)} \geq \delta |\xi_n|$. By Theorem~\ref{th:th_nonlin} (i), we can assume that $|\xi_n| = 1$. Thus, up to a subsequence, $\xi_n \underset{n \rightarrow + \infty}{\longrightarrow} \xi$. However, by~\eqref{eq:continuite_lemmecvremainder}, for all $n$ large enough, we have that $\|\nabla \widetilde{w_{\xi_n}} - \nabla \widetilde{w_{\xi}}\|_{L^p(x_n + Q)} \leq \delta/2$. Thus, for $n$ large enough, we have that $\|\nabla \widetilde{w_{\xi}} \|_{L^p(x_n + Q)} \geq \delta/2$. Since $|x_n| \underset{n \rightarrow + \infty}{\longrightarrow} + \infty$, this contradicts that $\nabla \widetilde{w_{\xi}} \in L^p(\R^d)$. Thus~\eqref{eq:lem_cvremainder} is satisfied.

\medskip

We now turn to the proof of~\eqref{eq:cclcv_remainder}. By an immediate application of the Jensen inequality, we have that
\begin{equation}
\forall B \in \NN \cup \{+\infty\}, \quad \sum_{|k| < B, \ \varepsilon(Q+k) \subset \Omega} \varepsilon^d \big|\Psi_{\varepsilon}^k\big|^p \leq \int_{\Omega \cap B_{\infty}(0,\varepsilon B)} |\Psi|^p, \quad \Psi_{\varepsilon}^k := \fint_{\varepsilon(Q+k)} \Psi,
\label{eq:intermediaire_cvremaindeer}
\end{equation}
where $B_{\infty}(x,r)$ denotes the ball centered in $x$ and of radius $r > 0$ for the $|\cdot|_{\infty}-$norm on $\R^d$.
 Let $\delta > 0$ and $A$ be given by~\eqref{eq:lem_cvremainder}. We have that
\begin{equation}
\begin{aligned}
\int_{\Omega} \left|\nabla \widetilde{w_{M_{\varepsilon} \Psi}} \big( \frac{\cdot}{\varepsilon} \big) \right|^p & = \sum_{k \in \ZZ^d, \ \varepsilon(Q+k) \subset \Omega} \varepsilon^d \int_{Q+k} \big|\nabla \widetilde{w_{\Psi_{\varepsilon}^k}}\big|^p \\
& \leq \sum_{|k| < A, \ \varepsilon(Q+k) \subset \Omega} \varepsilon^d \int_{Q+k} \big|\nabla \widetilde{w_{\Psi_{\varepsilon}^k}}\big|^p + \sum_{|k| \geq A, \ \varepsilon(Q+k) \subset \Omega} \varepsilon^d \int_{Q+k} \big|\nabla \widetilde{w_{\Psi_{\varepsilon}^k}}\big|^p \\
& \underset{\eqref{eq:lem_cvremainder},\eqref{eq:estim_holder_per}}{\leq} C \sum_{|k| < A, \ \varepsilon(Q+k) \subset \Omega} \varepsilon^d |\Psi_{\varepsilon}^k|^p + \delta^p \sum_{|k| \geq A, \ \varepsilon(Q+k) \subset \Omega} \varepsilon^d |\Psi_{\varepsilon}^k|^p \\
& \underset{\eqref{eq:intermediaire_cvremaindeer}}{\leq} C \int_{B_{\infty}(0,\varepsilon A)\cap \Omega} |\Psi|^p + \delta^p \int_{\Omega} |\Psi|^p.
\end{aligned} 
\end{equation}
By the dominated convergence Theorem, we have that 
\begin{equation}
\limsup_{\varepsilon\rightarrow 0} \int_{\Omega} \left|\nabla \widetilde{w_{M_{\varepsilon} \Psi}} \big( \frac{\cdot}{\varepsilon} \big) \right|^p \leq \delta^p \int_{\Omega} |\Psi|^p.
\label{lem:cv_remainder_final}
\end{equation}
Since~\eqref{lem:cv_remainder_final} is true for all $\delta > 0$, we haved proved~\eqref{eq:cclcv_remainder}.
\end{proof}

We now state the analogous of~\cite[Lemma~3.5]{dal1990correctors} to the present non-periodic setting. Before that, we introduce for $\xi, y \in \R^d$ the notations 
\begin{equation}
p^{\per}(y,\xi) := \xi + \nabla w^{\per}_{\xi}(y) \quad \text{and} \quad p(y,\xi) := \xi + \nabla w_{\xi}(y) = p^{\per}(y,\xi) + \nabla \widetilde{w_{\xi}}(y).
\label{eq:p_(x,xi)}
\end{equation}

\begin{lemme}
Assume that the Assumptions of Lemma~\ref{lem:cv_remainder} are satisfied. Let $\Psi \in L^p(\Omega)$ and $\Phi \in L^p(\Omega)$ such that $\Phi = \sum_{j=1}^m \eta_j 1_{\Omega_j}$ where $\bigcup_{j=1}^m \Omega_j \subset \subset \Omega$, $\Omega_k \cap \Omega_{\ell} = \emptyset$ for $k \neq \ell$ and $|\partial \Omega_j| = 0$ for $j \in \{1,m\}$. Then there exists a constant $C > 0$ independent of $\varepsilon$, $\Psi$ and $\Phi$ such that 
\begin{equation}
\limsup_{\varepsilon \rightarrow 0} \big\| p(\cdot/\varepsilon,M_{\varepsilon}\Psi) - p(./\varepsilon,\Phi) \big\|_{L^p(\Omega)} \leq C \left\{ \|\Psi\|_{L^p(\Omega)}^{1-\beta} + \|\Phi \|_{L^p(\Omega)}^{1-\beta} \right\} \|\Psi - \Phi\|_{L^p(\Omega)}^{\beta},
\label{eq:equiv_lem_3.5}
\end{equation}
where $\beta$ is given by Proposition~\ref{prop:periodique} (iii).
\label{lem:equiv_lem_3.5}
\end{lemme}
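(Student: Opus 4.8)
The plan is to decompose $p(\cdot/\varepsilon,M_{\varepsilon}\Psi)-p(\cdot/\varepsilon,\Phi)$ into four pieces that are estimated separately, in the spirit of the proof of \cite[Lemma~3.5]{dal1990correctors}; the only genuinely new point is that the non-periodic part will be controlled by Lemma~\ref{lem:cv_remainder}. Recalling that $p(y,\xi)=\xi+\nabla w_{\xi}^{\per}(y)+\nabla\widetilde{w_{\xi}}(y)$, write, for a.e. $x\in\Omega$,
\begin{equation*}
\begin{aligned}
p(x/\varepsilon,M_{\varepsilon}\Psi(x))-p(x/\varepsilon,\Phi(x))={}&\big(M_{\varepsilon}\Psi(x)-\Phi(x)\big)+\big(\nabla w_{M_{\varepsilon}\Psi(x)}^{\per}(x/\varepsilon)-\nabla w_{M_{\varepsilon}\Phi(x)}^{\per}(x/\varepsilon)\big)\\
&{}+\big(\nabla w_{M_{\varepsilon}\Phi(x)}^{\per}(x/\varepsilon)-\nabla w_{\Phi(x)}^{\per}(x/\varepsilon)\big)+\big(\nabla\widetilde{w_{M_{\varepsilon}\Psi(x)}}(x/\varepsilon)-\nabla\widetilde{w_{\Phi(x)}}(x/\varepsilon)\big),
\end{aligned}
\end{equation*}
and estimate the $L^p(\Omega)$-norm of each of the four lines, outside the union of the cubes $\varepsilon(Q+k)\subset\Omega$ everything being zero since there $M_{\varepsilon}\Psi=M_{\varepsilon}\Phi=0$ and $\nabla w_{0}^{\per}=0$, $\nabla\widetilde{w_{0}}=0$.

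For the first line, $M_{\varepsilon}\Psi\to\Psi$ in $L^p(\Omega)$ gives $\limsup_{\varepsilon\to0}\|M_{\varepsilon}\Psi-\Phi\|_{L^p(\Omega)}=\|\Psi-\Phi\|_{L^p(\Omega)}$, and since $0\le1-\beta\le1$, subadditivity of $t\mapsto t^{1-\beta}$ yields $\|\Psi-\Phi\|_{L^p(\Omega)}=\|\Psi-\Phi\|_{L^p(\Omega)}^{1-\beta}\|\Psi-\Phi\|_{L^p(\Omega)}^{\beta}\le(\|\Psi\|_{L^p(\Omega)}^{1-\beta}+\|\Phi\|_{L^p(\Omega)}^{1-\beta})\|\Psi-\Phi\|_{L^p(\Omega)}^{\beta}$. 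For the fourth (defect) line, $\limsup_{\varepsilon\to0}\|\nabla\widetilde{w_{M_{\varepsilon}\Psi}}(\cdot/\varepsilon)\|_{L^p(\Omega)}=0$ by Lemma~\ref{lem:cv_remainder}, whereas $\nabla\widetilde{w_{\Phi}}(\cdot/\varepsilon)=\sum_{j=1}^m\nabla\widetilde{w_{\eta_j}}(\cdot/\varepsilon)\,1_{\Omega_j}$ obeys $\int_{\Omega_j}|\nabla\widetilde{w_{\eta_j}}(x/\varepsilon)|^p\,\dint x=\varepsilon^d\int_{\varepsilon^{-1}\Omega_j}|\nabla\widetilde{w_{\eta_j}}|^p\le\varepsilon^d\|\nabla\widetilde{w_{\eta_j}}\|_{L^p(\RR^d)}^p\to0$, a finite sum over $j$. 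For the third line: for $\varepsilon$ small, $\bigcup_j\Omega_j$ is covered by the cubes $\varepsilon(Q+k)\subset\Omega$ (using $\bigcup_j\Omega_j\subset\subset\Omega$), and on any such cube lying inside a single $\Omega_j$ or inside $\Omega\setminus\bigcup_j\Omega_j$ one has $M_{\varepsilon}\Phi=\Phi$, so the third line is supported in the set $E_{\varepsilon}$ of cubes $\varepsilon(Q+k)\subset\Omega$ meeting $\bigcup_j\partial\Omega_j$, which satisfies $|E_{\varepsilon}|\to0$ because $|\partial\Omega_j|=0$; using the pointwise bound $|\nabla w_{\xi}^{\per}(y)|\le C|\xi|$ from Proposition~\ref{prop:periodique}(ii) together with $|M_{\varepsilon}\Phi|,|\Phi|\le\max_j|\eta_j|$, the $L^p(\Omega)$-norm of the third line is $\le(C\max_j|\eta_j|^p\,|E_{\varepsilon}|)^{1/p}\to0$.

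The only term carrying the claimed rate is the second line. On each cube $\varepsilon(Q+k)\subset\Omega$ both $M_{\varepsilon}\Psi$ and $M_{\varepsilon}\Phi$ are constant, equal to $\Psi_{\varepsilon}^k:=\fint_{\varepsilon(Q+k)}\Psi$ and $\Phi_{\varepsilon}^k:=\fint_{\varepsilon(Q+k)}\Phi$; rescaling $x=\varepsilon y$ and using Proposition~\ref{prop:periodique}(iii) on $Q+k$ (legitimate since the $L^p_{unif}$-bound dominates $\|\cdot\|_{L^p(Q+k)}$) together with $(a+b)^p\le2^{p-1}(a^p+b^p)$,
\begin{equation*}
\int_{\varepsilon(Q+k)}\big|\nabla w_{M_{\varepsilon}\Psi}^{\per}(x/\varepsilon)-\nabla w_{M_{\varepsilon}\Phi}^{\per}(x/\varepsilon)\big|^p\,\dint x=\varepsilon^d\int_{Q+k}\big|\nabla w_{\Psi_{\varepsilon}^k}^{\per}-\nabla w_{\Phi_{\varepsilon}^k}^{\per}\big|^p\le C\varepsilon^d\big(|\Psi_{\varepsilon}^k|^{(1-\beta)p}+|\Phi_{\varepsilon}^k|^{(1-\beta)p}\big)|\Psi_{\varepsilon}^k-\Phi_{\varepsilon}^k|^{\beta p}.
\end{equation*}
Summing over $k$, applying the discrete H\"{o}lder inequality with exponents $\frac{1}{1-\beta}$ and $\frac{1}{\beta}$ and Jensen's inequality in the form $\sum_k\varepsilon^d|\Psi_{\varepsilon}^k|^p\le\|\Psi\|_{L^p(\Omega)}^p$ (and likewise with $\Phi_{\varepsilon}^k$ and with $\Psi_{\varepsilon}^k-\Phi_{\varepsilon}^k$), then taking $p$-th roots and using subadditivity of $t\mapsto t^{1/p}$, I obtain
\begin{equation*}
\big\|\nabla w_{M_{\varepsilon}\Psi}^{\per}(\cdot/\varepsilon)-\nabla w_{M_{\varepsilon}\Phi}^{\per}(\cdot/\varepsilon)\big\|_{L^p(\Omega)}\le C\big(\|\Psi\|_{L^p(\Omega)}^{1-\beta}+\|\Phi\|_{L^p(\Omega)}^{1-\beta}\big)\|\Psi-\Phi\|_{L^p(\Omega)}^{\beta}.
\end{equation*}
Adding the four bounds and passing to the $\limsup$ gives \eqref{eq:equiv_lem_3.5} with $C=C(d,p,a^{\per})$. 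The main nuisance will be the third line — reconciling the fixed $\varepsilon$-grid with the partition $\{\Omega_j\}$, which is exactly where the hypotheses $\bigcup_j\Omega_j\subset\subset\Omega$ and $|\partial\Omega_j|=0$ are used — whereas the one substantial new analytic ingredient, the decay of the non-periodic correctors under homogenization, has already been isolated in Lemma~\ref{lem:cv_remainder}.
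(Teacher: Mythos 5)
Your proof is correct and follows essentially the same route as the paper: split $p(\cdot,\xi)$ into its periodic part $p^{\per}$ and the defect part $\nabla\widetilde{w_{\xi}}$, kill the defect contributions via Lemma~\ref{lem:cv_remainder} and the rescaling identity $\int_{\Omega_j}|\nabla\widetilde{w_{\eta_j}}(\cdot/\varepsilon)|^p\leq\varepsilon^d\|\nabla\widetilde{w_{\eta_j}}\|_{L^p(\RR^d)}^p$, and reduce the remaining estimate to the periodic case. The only difference is that the paper simply cites \cite[Lemma~3.5]{dal1990correctors} for the periodic part, whereas you reprove it (correctly) via the grid decomposition, Proposition~\ref{prop:periodique}~(ii)--(iii), the discrete H\"older/Jensen argument, and the $|\partial\Omega_j|=0$ boundary-cube estimate.
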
 

\begin{proof}
[Proof of Lemma~\ref{lem:equiv_lem_3.5}] We first notice that 
\begin{equation}
\int_{\Omega} \left|\nabla \widetilde{w}_{\Phi}\big(\frac{\cdot}{\varepsilon}\big) \right|^p = \sum_{j=1}^m \int_{\Omega_j} \left|\nabla \widetilde{w}_{\eta_j}\big(\frac{\cdot}{\varepsilon}\big) \right|^p \leq \varepsilon^d \sum_{j=1}^m \int_{\R^d} \left|\nabla \widetilde{w}_{\eta_j} \right|^p \underset{\varepsilon\rightarrow 0}{\longrightarrow} 0.
\label{eq:lem_6.2}
\end{equation}
 With the notations~\eqref{eq:p_(x,xi)}, we have, applying \cite[Lemma 3.5]{dal1990correctors}, that 
$$
\begin{aligned}\limsup_{\varepsilon \rightarrow 0} \big\| p(\cdot/\varepsilon,M_{\varepsilon}\Psi) - p(./\varepsilon,\Phi) \big\|_{L^p(\Omega)} & \leq \underbrace{\limsup_{\varepsilon \rightarrow 0} \big\| p^{\per}(\cdot/\varepsilon,M_{\varepsilon}\Psi) - p^{\per}(./\varepsilon,\Phi) \big\|_{L^p(\Omega)}}_{\leq \text{RHS} \ \text{of} \ \eqref{eq:equiv_lem_3.5}} \\ & \quad + \underbrace{\limsup_{\varepsilon\rightarrow 0} \left\{\big\| \nabla \widetilde{w_{M_{\varepsilon}\Psi}}(./\varepsilon)\big\|_{L^p(\Omega)} + \big\|\nabla \widetilde{w_{\Phi}}(./\varepsilon) \big\|_{L^p(\Omega)} \right\}}_{=0 \ \text{by} \ \text{Lemma}~\ref{lem:cv_remainder} \ \text{and} \ \eqref{eq:lem_6.2}}.
\end{aligned}$$
\end{proof}

With these tools, we can prove Theorem~\ref{th:homog_qualititative}. The first point (i) is not detailed here since it is mainly a rewriting of \cite{fusco1986homogenization}. Note that for this point, the continuity of $\xi \longmapsto \nabla \widetilde{w_{\xi}}$ is not neeeded. The only result on the non-periodic correctors $\nabla w_{\xi}$, $\xi \in \R^d$ that is used is Theorem~\ref{th:th_nonlin} (ii). The proof of Theorem~\ref{th:homog_qualititative} (ii) follows the proof of~\cite[Theorem 2.1]{dal1990correctors}. In the following, we sketch the proof of Theorem~\ref{th:homog_qualititative} (ii) by insisting on the points that differ from~\cite{dal1990correctors}. The proof of Theorem~\ref{th:homog_qualititative} (iii) follows from~Theorem~\ref{th:homog_qualititative}~(ii) together with Lemma~\ref{lem:cv_remainder}.

\paragraph*{Sketch of proof of Theorem~\ref{th:homog_qualititative} (ii).} Since $M_{\varepsilon} \nabla u^*$ converges to $\nabla u^*$ when $\varepsilon\rightarrow 0$ in~$L^p(\Omega)$, it is sufficient to show, using the notation~\eqref{eq:p_(x,xi)} that 
\begin{equation}
R_{\varepsilon} := \nabla u_{\varepsilon} - p(./\varepsilon,M_{\varepsilon}\nabla u^*) \underset{\varepsilon \rightarrow 0}{\longrightarrow} 0 \quad \text{in} \quad L^p(\Omega).
\label{eq:dalmaso}
\end{equation} 
During the proof, we introduce a step function $\Phi$ as in~Lemma~\ref{lem:equiv_lem_3.5} satisfying $\|\nabla u^* - \Phi \|_{L^p(\Omega)} \leq \delta$.
By monotonicity of the $p-$Laplace operator, see~\eqref{eq:useful_ineq_1}, and Assumption~\textbf{(A1)}, we have that
\begin{equation}
\begin{aligned}
\lambda^{-1}c&\| R_{\varepsilon} \|_{L^p(\Omega)}^p \\ & \leq \int_{\Omega}\left\langle a(./\varepsilon)|\nabla u_{\varepsilon}|^{p-2}\nabla u_{\varepsilon} - a(./\varepsilon)|p(./\varepsilon,M_{\varepsilon} \nabla u^*)|^{p-2}p(./\varepsilon,M_{\varepsilon} \nabla u^*), \nabla u_{\varepsilon} - p(./\varepsilon,M_{\varepsilon}\nabla u^*) \right\rangle  \\ & =A_{\varepsilon} - B_{\varepsilon} - C_{\varepsilon} + D_{\varepsilon},
\end{aligned}
\label{eq:R_(eps)}
\end{equation}
where 
$$\begin{aligned} A_{\varepsilon} & := \int_{\Omega} a(./\varepsilon)|\nabla u_{\varepsilon}|^p, \quad B_{\varepsilon} := \int_{\Omega} a(./\varepsilon)|\nabla u_{\varepsilon}|^{p-2}\nabla u_{\varepsilon}\cdot p(./\varepsilon,M_{\varepsilon}\nabla u^*)\\ 
 C_{\varepsilon} & := \int_{\Omega} a(./\varepsilon)|p(./\varepsilon,M_{\varepsilon} \nabla u^*)|^{p-2}p(./\varepsilon,M_{\varepsilon} \nabla u^*) \cdot \nabla u_{\varepsilon} \quad \text{and} \quad  D_{\varepsilon} := \int_{\Omega} a(./\varepsilon)|p(./\varepsilon,M_{\varepsilon} \nabla u^*)|^{p}. \end{aligned}$$
The term $A_{\varepsilon}$ is obviously treated by the $L^p-$weak convergence $u_{\varepsilon} \underset{\varepsilon\rightarrow 0}{\relbar\joinrel\rightharpoonup} u^*$:
\begin{equation}
A_{\varepsilon} = \int_{\Omega} f u_{\varepsilon} \underset{\varepsilon\rightarrow 0}{\longrightarrow} \int_{\Omega} f u^* = \int_{\Omega} a^*(\nabla u^*)\cdot \nabla u^*.
\label{eq:A_(eps)}
\end{equation}
We study the term $B_{\varepsilon}$ when $M_{\varepsilon}\nabla u^*$ is replaced by $\Phi$. This gives:
$$\int_{\Omega} a(./\varepsilon)|\nabla u_{\varepsilon}|^{p-2}\nabla u_{\varepsilon}\cdot p(./\varepsilon,\Phi) = \sum_{j=1}^m \int_{\Omega_j} a(./\varepsilon)|\nabla u_{\varepsilon}|^{p-2}\nabla u_{\varepsilon}\cdot p(./\varepsilon,\eta_j).$$
We then apply the standard div-curl Lemma, keeping in mind that $a(./\varepsilon)|\nabla u_{\varepsilon}|^{p-2}\nabla u_{\varepsilon}$ converges $L^{p'}(\Omega)-$weakly to $a^*(\nabla u^*)$ when $\varepsilon\rightarrow 0$, that $p(./\varepsilon,\eta_j)$ converges $L^p-$weakly to $\eta_j$ and that, thanks to Theorem~\ref{th:th_nonlin}~(ii), $a(./\varepsilon)|\nabla u_{\varepsilon}|^{p-2}\nabla u_{\varepsilon}\cdot p(./\varepsilon,\eta_j)$ is bounded in $L^{p'}(\Omega)$, uniformly with respect to $\varepsilon$. Thus,
$$\sum_{j=1}^m \int_{\Omega_j} a(./\varepsilon)|\nabla u_{\varepsilon}|^{p-2}\nabla u_{\varepsilon}\cdot p(./\varepsilon,\eta_j) \underset{\varepsilon\rightarrow 0}{\longrightarrow} \sum_{j=1}^m \int_{\Omega_j} a^*(\nabla u^*)\cdot \eta_j = \int_{\Omega} a^*(\nabla u^*)\cdot \Phi.
$$ In view of Lemma~\ref{lem:equiv_lem_3.5}, we obtain that 
\begin{equation}
\limsup_{\varepsilon \rightarrow 0} \left|B_{\varepsilon} - \int_{\Omega} a^*(\nabla u^*)\cdot \Phi \right| = O(\delta^{\beta}),
\label{eq:B_(eps)}
\end{equation} where the $O$ is independent of $\delta$. The term $C_{\varepsilon}$ is also treated by replacing $M_{\varepsilon} \nabla u^*$ by $\Phi$ and using the div-curl Lemma. Noticing that $a(./\varepsilon)p(./\varepsilon,\eta_j)|p(./\varepsilon,\eta_j)|^{p-2} \underset{\varepsilon\rightarrow 0}{\relbar\joinrel\rightharpoonup} a^*(\eta_j)$ in $L^{p'}(\Omega)$, we obtain that
\begin{equation}
\limsup_{\varepsilon \rightarrow 0} \left|C_{\varepsilon} - \int_{\Omega} a^*(\Phi) \cdot \nabla u^* \right| = O(\delta^{\beta}).
\label{eq:C_(eps)}
\end{equation} 
We introduce $D_{\varepsilon}^{\per} := \int_{\Omega} a^{\per}(./\varepsilon)\left|p^{\per}(./\varepsilon,M_{\varepsilon} \nabla u^*)\right|^{p}$. By \cite[Step~1, pp.1161-1162]{dal1990correctors}, we have that 
\begin{equation}
D_{\varepsilon}^{\per} \underset{\varepsilon \rightarrow 0}{\longrightarrow} \int_{\Omega}  a^*(\nabla u^*)\cdot\nabla u^*.
\label{eq:D_eps_0}
\end{equation}
Besides, since $\big| |x|^p - |y|^p \big| \leq C\big( |x|^{p-1} + |y|^{p-1} \big) |x-y|$ for all $x,y \in \RR^d$, we get 
\begin{equation}
\begin{aligned}
\big|D_{\varepsilon} - D_{\varepsilon}^{\per}\big| & \leq \int_{\Omega} \big|\widetilde{a}\big(\frac{\cdot}{\varepsilon}\big)\big|\big|p\big(\frac{\cdot}{\varepsilon},M_{\varepsilon} \nabla u^*\big)\big|^{p} \\ &+ C \int_{\Omega} \big|a^{\per}\big(\frac{\cdot}{\varepsilon}\big)\big|^p\left(\big|p\big(\frac{\cdot}{\varepsilon},M_{\varepsilon} \nabla u^* \big)\big|^{p-1} +\big|p^{\per}\big(\frac{\cdot}{\varepsilon},M_{\varepsilon} \nabla u^* \big)\big|^{p-1}\right)\big|\nabla \widetilde{w_{M_{\varepsilon}\nabla u^*}}\big(\frac{\cdot}{\varepsilon} \big) \big|.
\end{aligned}
\label{eq_D_(eps)}
\end{equation}
We show that each term of the RHS of~\eqref{eq_D_(eps)} vanishes as $\varepsilon \longrightarrow 0$. We use Theorem~\ref{th:th_nonlin} (ii) and~\eqref{eq:intermediaire_cvremaindeer} with $B =+\infty$, which imply that there exists a constant $C > 0$ independent of $\varepsilon$ such that
\begin{equation}
\int_{\Omega}\big|p\big(\frac{\cdot}{\varepsilon},M_{\varepsilon} \nabla u^* \big)\big|^{p} +\big|p^{\per}\big(\frac{\cdot}{\varepsilon},M_{\varepsilon} \nabla u^* \big)\big|^{p} \leq C \|\nabla u^*\|_{L^p(\Omega)}^p.
\label{eq:D_(eps)_2}
\end{equation}
With the H\"{o}lder inequality and Lemma~\ref{lem:cv_remainder}, we prove that the second term of the RHS of \eqref{eq_D_(eps)} tends to zero as $\varepsilon \longrightarrow 0$. As for the first term, we write that
\begin{equation}
\begin{aligned}
\int_{\Omega} & \big|\widetilde{a}\big(\frac{\cdot}{\varepsilon}\big)\big|\big|p\big(\frac{\cdot}{\varepsilon},M_{\varepsilon} \nabla u^*\big)\big|^{p}  \leq C\int_{\Omega} \big|\widetilde{a}\big(\frac{\cdot}{\varepsilon}\big)\big|\big|p\big(\frac{\cdot}{\varepsilon},\Phi\big)\big|^{p} + C\int_{\Omega} \big|\widetilde{a}\big(\frac{\cdot}{\varepsilon}\big)\big|\big|p\big(\frac{\cdot}{\varepsilon},M_{\varepsilon} \nabla u^*\big) - p\big(\frac{\cdot}{\varepsilon},\Phi\big)\big|^{p} \\
& \underset{\eqref{eq:estim_holder_per},\eqref{eq:D_(eps)_2}}{\leq} C\underbrace{\big\|\widetilde{a}\big(\frac{\cdot}{\varepsilon}\big)\|_{L^{1}(\Omega)}}_{= O(\varepsilon^{d/p'})} \|\Phi\|_{L^{\infty}(\R^d)}^p + C\|\widetilde{a}\|_{L^{\infty}(\R^d)} \big\|p\big(\frac{\cdot}{\varepsilon},M_{\varepsilon} \nabla u^*\big) - p\big(\frac{\cdot}{\varepsilon},\Phi\big)\big\|_{L^p(\R^d)}^p,
\end{aligned}
\label{eq:D_(eps)_3}
\end{equation}
where we used that $\widetilde{a} \in L^{\infty}(\R^d)$ and the bound $|p(y,\xi)| \leq C|\xi|$ where $C>0$ is independent of $y$ and $\xi$. Collecting \eqref{eq:D_eps_0}, \eqref{eq_D_(eps)}, \eqref{eq:D_(eps)_3} and Lemma~\ref{lem:equiv_lem_3.5}, we have proved that 
\begin{equation}
\limsup_{\varepsilon \rightarrow 0} \left|D_{\varepsilon} -  \int_{\Omega} a^*(\nabla u^*)\cdot\nabla u^* \right| = O(\delta^{p\beta}).
\label{eq:D_(eps)}
\end{equation}
Finally, collecting~\eqref{eq:A_(eps)}, \eqref{eq:B_(eps)}, \eqref{eq:C_(eps)},~\eqref{eq:D_(eps)} and~\eqref{eq:R_(eps)}, we obtain that
\begin{equation}
\limsup_{\varepsilon \rightarrow 0}\left\| R_{\varepsilon} \right\|_{L^p(\Omega)}^p \leq \int_{\Omega}\left| a^*(\nabla u^*)\cdot \left(\nabla u^* - \Phi\right) \right| + \int_{\Omega}\left|\left( a^*(\nabla u^*) - a^*(\Phi)\right)\cdot\nabla u^* \right| + O(\delta^{\beta}).
\end{equation} 
Using the following property of $a^*$, see \cite[Remark~1.3]{dal1990correctors},
$$\big\| a^*(\nabla u^*) - a^*(\Phi) \big\|_{L^{p'}(\Omega)} \leq C \big[ \|\nabla u^* \|_{L^p(\Omega)}^{p-2} + \|\Phi \|_{L^p(\Omega)}^{p-2} \big]\|\nabla u^* -\Phi\|_{L^p(\Omega)},$$
 we conclude that $\limsup_{\varepsilon \rightarrow 0}\| R_{\varepsilon} \|_{L^p(\Omega)}^p = O(\delta^{\beta})$ where the $O$ is independent of $\delta$. Since this is true for all $\delta > 0$, we conclude that $\| R_{\varepsilon} \|_{L^p(\Omega)} \underset{\varepsilon \rightarrow 0}{\longrightarrow} 0$.

\begin{remarque}  Using the same strategy as above, it is straightforward to show that Theorem~\ref{th:homog_qualititative} holds with the operator $M_{\varepsilon}$ replaced by $M_{\varepsilon^{\nu}}$, $0 <\nu < 1$. In this case, the continuity of the application $\xi \mapsto\nabla  \widetilde{w_{\xi}}$ is not needed and we only use that $\nabla \widetilde{w_{\xi}} \in L^p(\R^d)$.
\end{remarque}

\section[Continuity]{Continuity of $\xi \mapsto\nabla w_{\xi}$: proof of Theorem~\ref{th:cont}}
\label{sect:weakcont}

\subsection{Preliminary Lemmas}

We begin this section with the following lemma that defines weak solution of PDEs of the form~\eqref{eq:PDE_faible}:

\begin{lemme}
Let $\xi \in \R^d$, a coefficient $a$ satisfying Assumption \textbf{(A1)} and $h \in L^{p'}(\R^d)^d$. Assume that Assumption~\textbf{(A4)'} is satisfied. Let $ v \in W_{\xi+\nabla w_{\xi}^{\per}}$ be solution in the distribution sense to the following PDE:
\begin{equation}
-\div \ a \big[( \xi+\nabla w_{\xi}^{\per} + \nabla v)|\xi+\nabla w_{\xi}^{\per} +  \nabla v|^{p-2} - (\xi+\nabla w_{\xi}^{\per}) |\xi+\nabla w_{\xi}^{\per}|^{p-2} \big] = \mathrm{div}(h).
\label{eq:PDE_faible}
\end{equation}
Then $\nabla v$ solves~\eqref{eq:PDE_faible} in the weak sense of Definition~\ref{def:def}: for all $w \in W_{\xi+\nabla w_{\xi}^{\per}}$,
\begin{equation}
\int_{\R^d} a \big[(\xi+\nabla w_{\xi}^{\per} + \nabla v)|\xi+\nabla w_{\xi}^{\per} + \nabla v|^{p-2}   - (\xi+\nabla w_{\xi}^{\per})|\xi+\nabla w_{\xi}^{\per}|^{p-2} \big]\cdot \nabla w = - \int_{\R^d} h \cdot \nabla w.
\label{eq:formfaible}
\end{equation}
\label{lem:formfaible}
\end{lemme}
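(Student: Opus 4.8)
The plan is to upgrade from the distributional formulation, which only tests against $\mathcal{C}_0^{\infty}(\R^d)$, to the full variational formulation of Definition~\ref{def:def}, which tests against every $w \in W_{\xi+\nabla w_{\xi}^{\per}}$. The bridge is the density result quoted in the excerpt: under Assumption~\textbf{(A4)'}, Lemma~\ref{lem:dense} (following~\cite{zhikov1998weighted}) gives that $\mathcal{C}_0^{\infty}(\R^d)$ is dense in $W_{\xi+\nabla w_{\xi}^{\per}}$ for the norm~\eqref{eq:normeWu}. So first I would fix $w \in W_{\xi+\nabla w_{\xi}^{\per}}$ and pick a sequence $(\varphi_n)_{n\in\NN}\subset \mathcal{C}_0^{\infty}(\R^d)$ with $\varphi_n \to w$ in $W_{\xi+\nabla w_{\xi}^{\per}}$, i.e. $\nabla\varphi_n \to \nabla w$ in $L^p(\R^d)$ and $|\xi+\nabla w_{\xi}^{\per}|^{\frac{p-2}{2}}\nabla\varphi_n \to |\xi+\nabla w_{\xi}^{\per}|^{\frac{p-2}{2}}\nabla w$ in $L^2(\R^d)$. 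Since $v$ solves~\eqref{eq:PDE_faible} in the distribution sense, \eqref{eq:formfaible} holds with $w$ replaced by $\varphi_n$, and the task reduces to passing to the limit $n\to\infty$ in each of the two integrals.

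The right-hand side is immediate: $h \in L^{p'}(\R^d)^d$ and $\nabla\varphi_n \to \nabla w$ in $L^p$, so $\int_{\R^d} h\cdot\nabla\varphi_n \to \int_{\R^d} h\cdot\nabla w$ by H\"older. For the left-hand side, write the integrand as $a\,\mathcal{A}_{\xi}\cdot\nabla\varphi_n$ where $\mathcal{A}_{\xi} := (\xi+\nabla w_{\xi}^{\per}+\nabla v)|\xi+\nabla w_{\xi}^{\per}+\nabla v|^{p-2} - (\xi+\nabla w_{\xi}^{\per})|\xi+\nabla w_{\xi}^{\per}|^{p-2}$ is a \emph{fixed} vector field (depending only on $v$, not on $n$), and $a \in L^{\infty}$ by~\textbf{(A1)}. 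The key point is that $\mathcal{A}_{\xi}$ belongs to the dual space of $W_{\xi+\nabla w_{\xi}^{\per}}$ described in Lemma~\ref{lem:banach}~(ii). Indeed, using inequality~\eqref{eq:useful_ineq_3} from Appendix~\ref{sect:ineq},
\[
|\mathcal{A}_{\xi}| \leq C\bigl(|\xi+\nabla w_{\xi}^{\per}|^{p-2} + |\nabla v|^{p-2}\bigr)|\nabla v|,
\]
and since $\nabla v \in L^p(\R^d)$ one has $|\nabla v|^{p-1}\in L^{p'}(\R^d)$, while the term $|\xi+\nabla w_{\xi}^{\per}|^{p-2}|\nabla v|$ can be split as $|\xi+\nabla w_{\xi}^{\per}|^{\frac{p-2}{2}}\cdot\bigl(|\xi+\nabla w_{\xi}^{\per}|^{\frac{p-2}{2}}|\nabla v|\bigr)$, a product of an $L^2$-weight factor with the $L^2$ quantity controlled by $\|v\|_{W_{\xi+\nabla w_{\xi}^{\per}}}$. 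Hence $\mathcal{A}_{\xi}$ splits as $g_1 + g_2|\xi+\nabla w_{\xi}^{\per}|^{p-2}$ with $g_1\in L^{p'}$ and $g_2|\xi+\nabla w_{\xi}^{\per}|^{\frac{p-2}{2}}\in L^2$, exactly the form in Lemma~\ref{lem:banach}~(ii) — that same lemma already certifies that the integrals in Definition~\ref{def:def} converge. Consequently $w\mapsto \int_{\R^d} a\,\mathcal{A}_{\xi}\cdot\nabla w$ is a bounded linear functional on $W_{\xi+\nabla w_{\xi}^{\per}}$, so $\int_{\R^d} a\,\mathcal{A}_{\xi}\cdot\nabla\varphi_n \to \int_{\R^d} a\,\mathcal{A}_{\xi}\cdot\nabla w$; concretely one bounds $\bigl|\int a\,\mathcal{A}_{\xi}\cdot(\nabla\varphi_n-\nabla w)\bigr|$ by $\|a\|_{L^\infty}$ times $\|g_1\|_{L^{p'}}\|\nabla\varphi_n-\nabla w\|_{L^p} + \bigl\||\xi+\nabla w_{\xi}^{\per}|^{\frac{p-2}{2}}g_2\bigr\|_{L^2}\bigl\||\xi+\nabla w_{\xi}^{\per}|^{\frac{p-2}{2}}(\nabla\varphi_n-\nabla w)\bigr\|_{L^2}$, and both factors tend to $0$ by the choice of $(\varphi_n)$. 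Passing to the limit in both sides of the identity for $\varphi_n$ yields~\eqref{eq:formfaible}.

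The only genuinely delicate ingredient is the density statement itself, i.e. that~\textbf{(A4)'} implies $\mathcal{C}_0^{\infty}(\R^d)$ is dense in $W_{\xi+\nabla w_{\xi}^{\per}}$; but this is precisely Lemma~\ref{lem:dense}, which we are entitled to invoke, so in the body of the present proof it is used as a black box. Everything else is bookkeeping with the integrability bounds supplied by Appendix~\ref{sect:ineq} and Lemma~\ref{lem:banach}. I would therefore expect the write-up to be short: state the approximating sequence, record that $\mathcal{A}_{\xi}\in (W_{\xi+\nabla w_{\xi}^{\per}})'$ via~\eqref{eq:useful_ineq_3}, and pass to the limit termwise.
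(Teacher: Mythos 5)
Your argument is correct, but it takes a different route from the paper's own proof. The paper does not approximate the test function $w$ by smooth functions; instead it tests the distributional equation directly against the truncation $\Phi_R := \big(w - \fint_{Q_R\setminus Q_{R/2}} w\big)\,\chi(\cdot/R)$, which lies in $W^{1,p}_0(Q_R)$, and then shows that the extra terms carrying $\nabla\chi(\cdot/R)$ vanish as $R\to+\infty$. This is exactly where \textbf{(A4)'} enters in the paper: the rescaled weighted Poincar\'e--Wirtinger inequality on the annulus $Q_R\setminus Q_{R/2}$ converts $\big\||\xi+\nabla w_{\xi}^{\per}|^{\frac{p-2}{2}}(w-\fint w)\big\|_{L^2}$ into $R\,\big\||\xi+\nabla w_{\xi}^{\per}|^{\frac{p-2}{2}}\nabla w\big\|_{L^2}$ on that annulus, cancelling the factor $1/R$ from $\nabla\chi(\cdot/R)$ and leaving tail norms of $\nabla v$, $\nabla w$ and $h$ that tend to zero since $v,w\in W_{\xi+\nabla w_{\xi}^{\per}}$ and $h\in L^{p'}$. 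Your version instead invokes Lemma~\ref{lem:dense} as a black box and then performs a clean duality passage via \eqref{eq:useful_ineq_3} and the splitting of $\mathcal{A}_{\xi}$ into an $L^{p'}$ part plus a weighted-$L^2$ part, which is precisely the dual description of Lemma~\ref{lem:banach}~(ii); this is the route the paper itself sanctions in Remark~2.7, and it is legitimate since Lemma~\ref{lem:dense} is proved independently in the appendix (by essentially the same cutoff computation), so there is no circularity. What each approach buys: yours is shorter and more modular, reducing the lemma to a continuity statement for a fixed bounded linear functional; the paper's is self-contained within the proof and makes the role of \textbf{(A4)'} explicit at the exact step where the annulus terms must be controlled. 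Your limit passages are all justified: the right-hand side by H\"older with $h\in L^{p'}$ and $\nabla\varphi_n\to\nabla w$ in $L^p$, and the left-hand side by the two-term bound you state, both factors of which converge by the definition of convergence in $W_{\xi+\nabla w_{\xi}^{\per}}$.
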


\begin{proof}
[Proof of Lemma~\ref{lem:formfaible}] We define $u := \xi + \nabla w_{\xi}^{\per}$ in the proof. We fix $w \in W_u$. In the following, $\chi$ will denote a smooth and compactly supported function with support in $Q(0,1)$ such that $\chi = 1$ in $Q(0,\frac{1}{2})$. We fix $R > 0$ and we introduce the function
$$\Phi_R := \left( w - \fint_{Q_{R}\setminus Q_{R/2}} w \right) \chi\left(\frac{\cdot}{R} \right).$$
By the Poincar\'e-Wirtinger inequality, we have that $\Phi_R\in W^{1,p}_0(Q_{R})$.
By \eqref{eq:useful_ineq_3}, we have the bound
\begin{equation}
\begin{aligned}
a \left|(u + \nabla v)|u + \nabla v|^{p-2} - u|u|^{p-2} \right| & \leq \lambda C \big[ |u+\nabla v|^{p-2} + |u|^{p-2} \big]|\nabla v| \\& \leq \lambda C(p) \left[ |u|^{p-2}|\nabla v| + |\nabla v|^{p-1} \right],
\end{aligned}
\label{eq:RHS2}
\end{equation}
where we have used the inequality $(b_1+b_2)^{p-2} \leq C(p)(b_1^{p-2} + b_2^{p-2})$ for $b_1,b_2 \geq 0$.
Thus, since $\nabla v \in L^p(Q_{R})$ and $p = p'(p-1)$,
$$\mathrm{div} \ a \left[(u + \nabla v)|u + \nabla v|^{p-2} - u|u|^{p-2} \right] \in W^{-1,p'}(Q_{R}) \quad \text{and} \quad \text{div}(h) \in W^{-1,p'}(Q_{R}).$$ 
Consequently, we can test \eqref{eq:PDE_faible} against $\Phi_R$ and obtain, after expanding $\nabla\Phi_R$,
\begin{equation}
\begin{aligned}
\int_{Q_{R}} & a \left[(u + \nabla v)|u + \nabla v|^{p-2} - u|u|^{p-2} \right]\cdot \chi\left(\frac{\cdot}{R} \right)\nabla w \\ & + \frac{1}{R}\int_{Q_{R}} a \left[(u + \nabla v)|u + \nabla v|^{p-2} - u|u|^{p-2} \right]\cdot \left(w - \fint_{Q_{R}\setminus Q_{R/2}} w \right)\nabla \chi \left(\frac{\cdot}{R} \right) = \\
&-  \int_{Q_{R}} h \cdot \chi\left(\frac{\cdot}{R} \right)\nabla w - \frac{1}{R}\int_{Q_{R}} h \cdot \left(w - \fint_{Q_{R} \setminus Q_{R/2}} w \right)\nabla \chi \left(\frac{\cdot}{R} \right).
\end{aligned}
\label{eq:Lemme_4.5}
\end{equation} 
We now recall the following Poincar\'e-Wirtinger inequality:
\begin{equation}
\begin{aligned}
&\left\| w - \fint_{Q_{R} \setminus Q_{R/2}} w \right\|_{L^p(Q_{R} \setminus Q_{R/2})} \leq CR \| \nabla w \|_{L^p(Q_{R } \setminus Q_{R/2})}
\end{aligned}
\label{eq:poinc}
\end{equation}
which is simply a rescaled version of the $L^p$ inequality on $Q\setminus Q_{1/2}$. Besides, thanks to Assumption~\textbf{(A4)'} (and its rescaled version), we have that
$$\left\||u|^{\frac{p-2}{2}} \left( w - \fint_{Q_{R}\setminus Q_{R/2}} w \right)\right\|_{L^2(Q_{R} \setminus Q_{R/2})} \leq CR \| |u|^{\frac{p-2}{2}}\nabla w \|_{L^2(Q_{R}\setminus Q_{R/2})}.$$
This yields, together with~\eqref{eq:RHS2}, H\"{o}lder inequality and the inclusion $\text{supp} (\nabla \chi) \subset Q_{R}\setminus Q_{R/2}$, that
\begin{equation}
\begin{aligned}
\bigg| \int_{Q_{R} \setminus Q_{R/2}} a &\left[(u + \nabla v)|u + \nabla v|^{p-2} - u|u|^{p-2} \right]\cdot \left(w - \fint_{Q_{R} \setminus Q_{R/2}} w \right)\nabla \chi \left(\frac{\cdot}{R} \right) \bigg| \\ & \leq \lambda CR \|\nabla \chi\|_{L^{\infty}} \bigg\{ \big\|\nabla v\big\|_{L^p(Q_{R} \setminus Q_{R/2})}^{p-1} \big\|\nabla w\big\|_{L^p(Q_{R} \setminus Q_{R/2})} \\& \hspace{4cm} + \big\| \left|u\right|^{\frac{p-2}{2}}\nabla v \big\|_{L^2(Q_{R} \setminus Q_{R/2})}  \left\| \left|u\right|^{\frac{p-2}{2}}\nabla w \right\|_{L^2(Q_{R} \setminus Q_{R/2})}  \bigg\}
\end{aligned}
\label{eq:RHS1}
\end{equation}
and 
\begin{equation}
\left|\int_{Q_R} h \cdot \left(w - \fint_{Q_{R}\setminus Q_{R/2}} w \right)\nabla \chi \left(\frac{\cdot}{R} \right) \right| \leq C R \| \nabla \chi \|_{L^{\infty}} \|h\|_{L^{p'}(Q_{R} \setminus Q_{R/2})} \| \nabla w \|_{L^{p}(Q_{R} \setminus Q_{R/2})}.
\label{eq:RHS3}
\end{equation}
Collecting~\eqref{eq:Lemme_4.5},~\eqref{eq:RHS1} and~\eqref{eq:RHS3} and recalling that $v,w \in W_u$, we have that
\begin{equation}
\left| \int_{\R^d} \chi\left(\frac{\cdot}{R}\right) \left\{ a \left[(u + \nabla v)|u + \nabla v|^{p-2} - u|u|^{p-2} \right] + h \right\}\cdot \nabla w \right| \underset{R \rightarrow + \infty}{\longrightarrow} 0.
\end{equation}
On the other hand, by the dominated convergence Theorem, again since $v,w \in W_u$, we have that
\begin{equation}
\begin{aligned}
\int_{\R^d} \chi\left(\frac{\cdot}{R}\right) & \left\{ a \left[(u + \nabla v)|u + \nabla v|^{p-2} - u|u|^{p-2} \right] + h \right\}\cdot \nabla w \\ & \underset{R \rightarrow + \infty}{\longrightarrow}  \int_{\R^d}\left\{ a \left[(u + \nabla v)|u + \nabla v|^{p-2} - u|u|^{p-2} \right] + h \right\}\cdot \nabla w.
\end{aligned}
\end{equation}
Thus~\eqref{eq:formfaible} is satisfied.
\end{proof}

The next lemma allows to pass to the limit in PDEs of the form~\eqref{eq:PDE_faible}.

\begin{lemme}
Let  $(\nabla\phi_n)_{n \in \NN} \subset L^{\infty}(\R^d)^d$, $(a_n)_{n \in \NN} \subset L^{\infty}(\R^d)$, $(h_n)_{n \in \NN} \subset L^{p'}(\R^d)^d$ and $(v_n)_{n \in \NN} \subset V$ (see~\eqref{eq:V}), such that $v_n \in W_{\nabla \phi_n}$ for all $n \in \NN$. We assume that, for all $n \in \NN$:
\begin{enumerate}
\item The coefficient $a_n$ satisfies Assumption \textbf{(A1)} with $\lambda$ uniformly bounded in $n$.
\item The function $\nabla v_n$ is solution, in the distribution sense, to 
\begin{equation}
- \mathrm{div}\ a_n \left[(\nabla \phi_n + \nabla v_n)|\nabla \phi_n + \nabla v_n|^{p-2} - \nabla \phi_n |\nabla \phi_n|^{p-2} \right] = \mathrm{div}(h_n).
\label{eq:passaglimite_eq_n}
\end{equation}
\end{enumerate}
We also assume the following convergences:
\begin{enumerate}
[label=(\roman*)]
\item $\nabla \phi_n \underset{n \rightarrow + \infty}{\longrightarrow} \nabla \phi$ in $L^{\infty}(\R^d)$;
\item $a_n \underset{n \rightarrow + \infty}{\longrightarrow} a$ in $L^{\infty}_{\text{loc}}(\R^d)$;
\item $\nabla v_n \underset{n \rightarrow + \infty}{\relbar\joinrel\rightharpoonup} \nabla v$ in~$L^p(\R^d)$ and $|\nabla\phi_n|^{\frac{p-2}{2}} \nabla v_n \underset{n \rightarrow + \infty}{\relbar\joinrel\rightharpoonup} |\nabla \phi|^{\frac{p-2}{2}} \nabla v$ in~$L^2(\R^d)$ which $v \in W_{\nabla \phi}$;
\item $h_n \underset{n \rightarrow + \infty}{\longrightarrow} h$ in $L^{p'}_{\text{loc}}(\R^d)$ with $h \in L^{p'}(\R^d)$.
\end{enumerate}
Then $\nabla v_n \underset{n \rightarrow + \infty}{\longrightarrow} \nabla v$ in $L^p_{\text{loc}}(\R^d)$ and $\nabla v$ is solution in the distribution sense to
\begin{equation}
-\mathrm{div} \ a \left[ (\nabla \phi + \nabla v)|\nabla \phi + \nabla v|^{p-2} - \nabla \phi |\nabla \phi|^{p-2} \right] = \mathrm{div}(h).
\label{eq:passaglimite_eq}
\end{equation} 
\label{lem:passaglimite}
\end{lemme}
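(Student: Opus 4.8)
The plan is to follow the classical Minty–Browder / monotone-operator strategy for passing to the limit in degenerate monotone PDEs, adapted to the weighted spaces $W_{\nabla\phi_n}$. First I would establish a uniform bound on $\nabla v_n$ in $W_{\nabla\phi_n}$: testing \eqref{eq:passaglimite_eq_n} against $v_n$ itself (which is licit in the distributional sense after a cut-off argument exactly as in Lemma~\ref{lem:formfaible}, since $v_n\in W_{\nabla\phi_n}$) and using the monotonicity inequality~\eqref{eq:useful_ineq_1} together with \textbf{(A1)} gives
\[
c\lambda^{-1}\int_{\R^d}\!\bigl(|\nabla v_n|^p+|\nabla\phi_n|^{p-2}|\nabla v_n|^2\bigr)\le \|h_n\|_{L^{p'}}\|\nabla v_n\|_{L^p},
\]
so $\|\nabla v_n\|_{W_{\nabla\phi_n}}$ is bounded uniformly; this is consistent with the already-assumed weak limits in (iii).

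Next I would localize. Fix a ball $B_R$ and a cut-off $\chi\in\mathcal C_0^\infty(B_{2R})$, $\chi\equiv1$ on $B_R$. The heart of the proof is the standard monotonicity trick: consider, for arbitrary $\psi\in\mathcal C_0^\infty(\R^d)$,
\[
I_n:=\int_{\R^d}\chi\,\Bigl(a_n\bigl[(\nabla\phi_n+\nabla v_n)|\nabla\phi_n+\nabla v_n|^{p-2}-(\nabla\phi_n+\nabla\psi)|\nabla\phi_n+\nabla\psi|^{p-2}\bigr]\Bigr)\cdot(\nabla v_n-\nabla\psi)\ge0,
\]
the nonnegativity coming from the monotonicity of $\xi\mapsto\xi|\xi|^{p-2}$. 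Using \eqref{eq:passaglimite_eq_n} tested (via Lemma~\ref{lem:formfaible}-type localization, using the distributional formulation and that $\chi(\nabla v_n-\nabla\psi)$ has compact support) I would rewrite the term $\int\chi\,a_n(\nabla\phi_n+\nabla v_n)|\cdot|^{p-2}\cdot(\nabla v_n-\nabla\psi)$ in terms of $h_n$, the gradient of $\chi$, and lower-order pieces, all of which pass to the limit by (i)–(iv) and the uniform bounds (the $\nabla\chi$ terms being controlled since $\nabla v_n\rightharpoonup\nabla v$ in $L^p_{loc}$ and $v_n$ is bounded in $L^p_{loc}$ by Poincaré). The other terms in $I_n$ converge because $a_n\to a$ in $L^\infty_{loc}$, $\nabla\phi_n\to\nabla\phi$ in $L^\infty$, $(\nabla\phi_n+\nabla\psi)|\nabla\phi_n+\nabla\psi|^{p-2}\to(\nabla\phi+\nabla\psi)|\nabla\phi+\nabla\psi|^{p-2}$ strongly in $L^{p'}_{loc}$, and $\nabla v_n\rightharpoonup\nabla v$ weakly in $L^p$. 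Taking $\liminf$ yields
\[
\int_{\R^d}\chi\,a\bigl[(\nabla\phi+\nabla v)|\nabla\phi+\nabla v|^{p-2}-(\nabla\phi+\nabla\psi)|\nabla\phi+\nabla\psi|^{p-2}\bigr]\cdot(\nabla v-\nabla\psi)\ge0,
\]
and then the Minty trick — replacing $\psi$ by $v-t\varphi$ (locally), dividing by $t$, and letting $t\to0^+$ — gives that $\nabla v$ solves \eqref{eq:passaglimite_eq} in the distribution sense.

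Finally, the strong $L^p_{loc}$ convergence: I would apply the quantitative monotonicity inequality \eqref{eq:useful_ineq_1} with $\xi=\nabla\phi_n+\nabla v_n$ and $\xi'=\nabla\phi_n+\nabla v$, obtaining
\[
c\lambda^{-1}\!\int_{\R^d}\!\chi\,a_n|\nabla v_n-\nabla v|^p\le \int_{\R^d}\!\chi\,a_n\bigl[(\nabla\phi_n+\nabla v_n)|\cdot|^{p-2}-(\nabla\phi_n+\nabla v)|\cdot|^{p-2}\bigr]\cdot(\nabla v_n-\nabla v).
\]
The right-hand side: the $\nabla v_n$-dependent part is rewritten via the PDE \eqref{eq:passaglimite_eq_n} (again localizing with $\chi$) and converges to the corresponding expression for the limit equation \eqref{eq:passaglimite_eq} tested against $\chi(\nabla v - \nabla v)=0$ in the limiting quantities — more precisely, both the term coming from $h_n$ and the term with $(\nabla\phi_n+\nabla v)|\cdot|^{p-2}$ converge (the latter strongly in $L^{p'}_{loc}$ by the already-proven weak convergence of $\nabla v_n$ combined with strong convergence of the fixed argument), and subtracting the limiting PDE identity shows the whole right-hand side tends to $0$. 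Hence $\int_{B_R}a_n|\nabla v_n-\nabla v|^p\to0$, and by \textbf{(A1)} this gives $\nabla v_n\to\nabla v$ in $L^p(B_R)$ for every $R$. The main obstacle I anticipate is the careful justification of testing the distributional equations \eqref{eq:passaglimite_eq_n} against the compactly-supported-but-only-$W^{1,p}_{loc}$ functions $\chi(\nabla v_n-\nabla\psi)$ and handling the degenerate weight terms with $\nabla\chi$ uniformly in $n$ — this is where one must invoke the Poincaré inequality on annuli and the uniform $W_{\nabla\phi_n}$-bounds, and where Assumption \textbf{(A4)'} (through Lemma~\ref{lem:formfaible}) may implicitly be needed to make the weak and distributional formulations coincide.
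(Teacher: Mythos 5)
Your proposal is correct in substance, and its final step (strong $L^p_{loc}$ convergence via the quantitative monotonicity inequality, testing the equation against a cut-off of $v_n-v$) is essentially the paper's entire proof. The paper, however, runs the argument in the opposite order and thereby dispenses with Minty altogether: it introduces the single test function $\Psi_n=\bigl(\phi_n+v_n-(\phi+v)-\fint_{B'}[\phi_n+v_n-(\phi+v)]\bigr)\chi\in W^{1,p}_0(B')$, notes that $\Psi_n\to0$ strongly in $L^p(B')$ and $\nabla\Psi_n\rightharpoonup0$ in $L^p(B')$ by Rellich, tests \eqref{eq:passaglimite_eq_n} against it, checks that every resulting term vanishes, and then applies \eqref{eq:useful_ineq_1} to get $\nabla v_n\to\nabla v$ in $L^p(B)$; once the convergence is strong and local, passing to the limit in the distributional formulation is immediate from \eqref{eq:useful_ineq_3}, with no Minty trick needed. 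Two caveats on your version. First, your Minty step is not only superfluous but, as written, circular: the $\liminf$ inequality you display already has $a(\nabla\phi+\nabla v)|\nabla\phi+\nabla v|^{p-2}$ in place of the weak $L^{p'}_{loc}$ limit $\sigma$ of the fluxes, which is precisely what the Minty argument is supposed to identify; you would need to carry $\sigma$ through and only then take $\psi=v-t\varphi$. Since your last step proves strong convergence independently, the cleanest fix is simply to delete the Minty passage. Second, your opening a priori estimate is obtained by ``testing against $v_n$ itself as in Lemma~\ref{lem:formfaible}''; that lemma requires Assumption \textbf{(A4)'} for the weight, which is not among the hypotheses here. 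Fortunately the bound is not needed: the weak convergences assumed in (iii) already give uniform boundedness of $\|v_n\|_{W_{\nabla\phi_n}}$ by Banach--Steinhaus. Likewise, your worry that \textbf{(A4)'} is implicitly needed to test against compactly supported functions is unfounded: the fluxes lie in $L^{p'}_{loc}$, so the distributional formulation may be tested against any compactly supported $W^{1,p}$ function, which is all the localization requires.
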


\begin{remarque}
If all other assumptions are satisfied, the assumption $|\nabla\phi_n|^{\frac{p-2}{2}} \nabla v_n \relbar\joinrel\rightharpoonup|\nabla \phi|^{\frac{p-2}{2}}\nabla v$ in $L^2(\R^d)$ can be weakened in $|\nabla\phi_n|^{\frac{p-2}{2}} \nabla v_n$ is $L^2-$weakly convergent. Indeed, following the proof of Lemma~\ref{lem:elementary_cor}(iii), we can prove that if $\nabla v_n \relbar\joinrel\rightharpoonup \nabla v$ in $L^p(\R^d)$, $\nabla \phi_n \longrightarrow \nabla \phi$ in $L^{\infty}(\R^d)$ and $|\nabla\phi_n|^{\frac{p-2}{2}} \nabla v_n$ is $L^2-$weakly convergent, then $|\nabla\phi_n|^{\frac{p-2}{2}} \nabla v_n \relbar\joinrel\rightharpoonup |\nabla \phi|^{\frac{p-2}{2}} \nabla v$ in $L^2(\R^d)$. In particular, we have that $v \in W_{\nabla\phi}$.
\label{re:hyp}
\end{remarque}

\begin{proof}
[Proof of Lemma~\ref{lem:passaglimite}] We fix two bounded smooth domains $B, B'$ such that $B \subset \subset B'$. Let $\chi \in \mathcal{D}(B')$ such that $\chi = 1$~on~$B$ and $0 \leq \chi \leq 1$ in $B'$. We introduce the function
$$\Psi_n :=\underset{:= \Psi_n^1}{\underbrace{ \left\{ \phi_n + v_n - (v + \phi) - \fint_{B'} \big[ \phi_n + v_n - (v + \phi) \big] \right\}}}\chi \in W^{1,p}_0(B').$$
We immediately check that, up to extracting a subsequence, we have by the Rellich compactness Theorem and \textit{(i), (iii)} that 
\begin{equation}
\Psi_n^1 \underset{n \rightarrow +\infty}{\longrightarrow} 0 \ \text{in} \ L^p(B') \quad \text{and} \quad \nabla \Psi_n^1 \underset{n \rightarrow +\infty}{\relbar\joinrel\rightharpoonup} 0 \ \text{in} \ L^p(B').
\label{eq:limit_Psi_n_1}
\end{equation}
Thus, 
\begin{equation}
\Psi_n \underset{n \rightarrow +\infty}{\longrightarrow} 0 \ \text{in} \ L^p(B') \quad \text{and} \quad \nabla \Psi_n \underset{n \rightarrow +\infty}{\relbar\joinrel\rightharpoonup} 0 \ \text{in} \ L^p(B').
\label{eq:limit_Psi_n}
\end{equation}
Since~$\Psi_n \in W^{1,p}_0(B')$, we can test $\Psi_n$ against~\eqref{eq:passaglimite_eq_n}. We re-organize the terms and get that
\begin{equation}
\begin{aligned}
\int_{B'} & a (\nabla \phi_n + \nabla v_n)|\nabla \phi_n + \nabla v_n|^{p-2}\cdot(\nabla \phi_n + \nabla v_n - \nabla \phi - \nabla v ) \chi \\ 
& = -\int_{B'}  a (\nabla \phi_n + \nabla v_n)|\nabla \phi_n + \nabla v_n|^{p-2}\cdot \Psi_n^1 \nabla\chi + \int_{B'} (a - a_n) (\nabla \phi_n + \nabla v_n)|\nabla \phi_n + \nabla v_n|^{p-2}\cdot \nabla \Psi_n\\
& \quad - \int_{B'} h_n\cdot \nabla \Psi_n + \int_{B'} a_n |\nabla \phi_n|^{p-2}\nabla \phi_n \cdot \nabla \Psi_n \\
& = - A_n + B_n - C_n + D_n.
\end{aligned}
\label{eq:passag_limit_preuve}
\end{equation}
We study each term separetely. The term $A_n$ vanishes when $n \longrightarrow + \infty$ since
$$
\big| (\nabla \phi_n + \nabla v_n)|\nabla \phi_n + \nabla v_n|^{p-2}\big| \leq C_p \big[|\nabla \phi_n|^{p-1} + |\nabla v_n |^{p-1}\big],
$$
which is bounded in $L^{p'}(B')$, uniformly with respect to $n$ by \textit{(i)} and \textit{(iii)} and \eqref{eq:limit_Psi_n_1}.
The term $B_n$ vanishes as $n \longrightarrow + \infty$ by \eqref{eq:limit_Psi_n} and since, by \textit{(i)}, \textit{(ii)} and \textit{(iii)}:
$$(a - a_n) (\nabla \phi_n + \nabla v_n)|\nabla \phi_n + \nabla v_n|^{p-2} \underset{n \rightarrow +\infty}{\longrightarrow} 0 \ \text{in} \ L^{p'}(B').$$
The term $C_n$ vanishes by~\eqref{eq:limit_Psi_n} and the $L^p_{\text{loc}}-$strong convergence of the sequence $(h_n)_{n \in \NN}$. The term $D_n$ vanishes by~\eqref{eq:limit_Psi_n} and the convergence of $a_n |\nabla \phi_n|^{p-2}\nabla \phi_n$ to $a |\nabla \phi|^{p-2} \nabla \phi$ in $L^{\infty}(B')$. We have proved that 
\begin{equation}
\int_{B'}  a (\nabla \phi_n + \nabla v_n)|\nabla \phi_n + \nabla v_n|^{p-2}\cdot(\nabla \phi_n + \nabla v_n - \nabla \phi - \nabla v ) \chi \underset{n \rightarrow +\infty}{\longrightarrow} 0.
\label{eq:passage_limie_tepreuve_cor}
\end{equation}
However, since $(\nabla \phi + \nabla v)|\nabla \phi + \nabla v|^{p-2} \in L^{p'}(B')$ and because of~\eqref{eq:limit_Psi_n_1} and \textit{(i)}, we also have that
\begin{equation}
\int_{B'}  a (\nabla \phi + \nabla v)|\nabla \phi + \nabla v|^{p-2}\cdot(\nabla \phi_n + \nabla v_n - \nabla \phi - \nabla v ) \chi \underset{n \rightarrow +\infty}{\longrightarrow} 0.
\label{eq:passage_limie_tepreuve_cor2}
\end{equation}
The difference between~\eqref{eq:passage_limie_tepreuve_cor} and~\eqref{eq:passage_limie_tepreuve_cor2} gives that
\begin{equation}
\int_{B'}  a \left[(\nabla \phi_n + \nabla v_n)|\nabla \phi_n + \nabla v_n|^{p-2} - (\nabla \phi + \nabla v)|\nabla \phi + \nabla v|^{p-2}\right]\cdot(\nabla \phi_n + \nabla v_n - \nabla \phi - \nabla v ) \chi \underset{n \rightarrow +\infty}{\longrightarrow} 0.
\label{eq:passage_limie_tepreuve_cor3}
\end{equation}
Using~\eqref{eq:passage_limie_tepreuve_cor3}, \eqref{eq:useful_ineq_1}, that $\chi \geq 0$ and $\chi = 1$ in $B$, we get
\begin{equation}
\int_B \big| \nabla \phi_n + \nabla v_n - \nabla \phi  - \nabla v\big|^p \underset{n \rightarrow +\infty}{\longrightarrow} 0.
\label{eq:passag_limit_cvloc}
\end{equation}
By \textit{(i)}, we obtain that $\nabla v_n \underset{n \rightarrow +\infty}{\longrightarrow} \nabla v$ in $L^p(B)$ up to a subsequence. We easily show that the convergence in fact holds for the whole sequence. We consequently get the $L^p_{\text{loc}}(\R^d)-$convergence since $B$ is arbitrary. 

\medskip

We now pass to the limit $n \rightarrow + \infty$ in~\eqref{eq:passaglimite_eq_n}. Let $\Psi \in \mathcal{D}(\R^d)$, we test~\eqref{eq:passaglimite_eq_n} against $\Psi$. By \textit{(iv)}, it is clear that
\begin{equation}
\int_{\R^d} h_n \cdot \nabla \Psi \underset{n \rightarrow +\infty}{\longrightarrow} \int_{\R^d} h \cdot \nabla \Psi.
\label{eq:lem_passag_limit_ccl1}
\end{equation}
Besides, by~\eqref{eq:useful_ineq_3}, \textit{(i)}, \textit{(ii)} and the $L^p_{\text{loc}}(\R^d)-$convergence of $\nabla v_n$, we have that 
$$
\begin{aligned}
a_n \big\{(\nabla \phi_n + \nabla v_n&)|\nabla \phi_n + \nabla v_n|^{p-2} - \nabla \phi_n|\nabla \phi_n|^{p-2} \big\} \\ & \underset{n \rightarrow +\infty}{\longrightarrow} a \left\{ (\nabla \phi + \nabla v)|\nabla \phi + \nabla v|^{p-2}  - \nabla \phi |\nabla \phi|^{p-2} \right\}\quad \text{in} \quad L^{p'}_{\text{loc}}(\R^d).
\end{aligned}$$
This shows that
$$\int_{\R^d} a \left\{ (\nabla \phi + \nabla v)|\nabla \phi + \nabla v|^{p-2}  - \nabla \phi |\nabla \phi|^{p-2} \right\} \cdot \nabla \Psi= - \int_{\R^d} h \cdot \nabla \Psi,$$
and concludes the proof of the Lemma~\ref{lem:passaglimite}.
\end{proof}

\subsection{Proof of Theorem~\ref{th:cont}}

Let $(\xi_n)_{n \in \NN} \subset \R^d$ such that $\xi_n \underset{n \rightarrow + \infty}{\longrightarrow} \xi$ for $\xi \in \R^d$. We aim at showing that $\nabla w_{\xi_n} \underset{n \rightarrow +\infty}{\longrightarrow} \nabla w_{\xi}$ in $L^p_{\text{unif}}(\R^d)$. By~Proposition~\ref{prop:periodique} (iii), it is sufficient to show that $\nabla \widetilde{w_{\xi_n}} \underset{n \rightarrow +\infty}{\longrightarrow} \nabla \widetilde{w_{\xi}}$ in $L^p_{\text{unif}}(\R^d)$.

\medskip

\underline{Step 1.} We have that $\nabla \widetilde{w_{\xi_n}} \underset{n \rightarrow +\infty}{\relbar\joinrel\rightharpoonup} \nabla \widetilde{w_{\xi}}$ in $L^p(\R^d)$. Indeed, by~\eqref{eq:useful_ineq_1},~\eqref{eq:useful_ineq_2} and the form of $h$, see~\eqref{eq:f_2}, we have the following a priori estimate: there exists a constant $C=C(d,p,a,a^{\per}) > 0$ such that for all $n \in \NN$, 
\begin{equation}
\|\widetilde{w_{\xi_n}} \|_{W_{\xi_n + \nabla w_{\xi_n}^{\per}}} \leq C\big(|\xi_n|+|\xi_n|^{p/2} \big).
\label{eq:bound}
\end{equation} 
In particular, there exists $v \in V$ (see~\eqref{eq:V} for the definition of $V$) and $w \in L^2(\R^d)$ such that 
\begin{equation}
\nabla \widetilde{w_{\xi_n}} \underset{n \rightarrow +\infty}{\relbar\joinrel\rightharpoonup} \nabla v \ \text{in} \ L^p(\R^d) \quad \text{and} \quad |\xi_n + \nabla w_{\xi_n}^{\per}|^{\frac{p-2}{2}}\nabla \widetilde{w_{\xi_n}} \underset{n \rightarrow +\infty}{\relbar\joinrel\rightharpoonup} w \ \text{in} \ L^2(\R^d).
\label{eq:th2_8_conv}
\end{equation}
Taking into account Remark~\ref{re:hyp}, we have that $v \in W_{\xi + \nabla w_{\xi}^{\per}}$ and $$|\xi_n + \nabla w_{\xi_n}^{\per}|^{\frac{p-2}{2}}\nabla \widetilde{w_{\xi_n}} \underset{n \rightarrow +\infty}{\relbar\joinrel\rightharpoonup} |\xi + \nabla w_{\xi}^{\per}|^{\frac{p-2}{2}} \nabla v \quad \text{in} \quad L^2(\R^d).$$
We apply Lemma~\ref{lem:passaglimite} with 
$$\begin{cases}
\begin{aligned}
a_n & = a, \quad v_n = \widetilde{w_{\xi_n}}\\
\phi_n & = \xi_n \cdot x + w_{\xi_n}^{\per}, \quad \phi = \xi \cdot x + w_{\xi}^{\per} \\
h_n & = \widetilde{a}|\xi_n + \nabla w_{\xi_n}^{\per}|^{p-2}(\xi_n + \nabla w_{\xi_n}^{\per}), \quad h = \widetilde{a}|\xi + \nabla w_{\xi}^{\per}|^{p-2}(\xi + \nabla w_{\xi}^{\per}).
\end{aligned}
\end{cases}$$
The required convergences follow from~\eqref{eq:th2_8_conv} and Proposition~\ref{prop:periodique} (ii) and (iv).
We get that $\nabla \widetilde{w_{\xi_n}}$ converges when $n \longrightarrow +\infty$ to $\nabla v$ in $L^{p}_{\text{loc}}(\R^d)$ and that $\nabla v$ solves \eqref{eq:th2.2_gen2} in the distribution sense. Thus, $\nabla v$ solves~\eqref{eq:th2.2_gen2} in the weak sense in $W_{\xi+\nabla w_{\xi}^{\per}}$ (see Definition~\ref{def:def}) by Lemma~\ref{lem:formfaible}, with $h$ given by~\eqref{eq:f_2}. In addition, by Theorem~\ref{th:existencecor}, the solution of this PDE is unique in $W_{\xi + \nabla w_{\xi}^{\per}}$. Thus $\nabla v  = \nabla \widetilde{w_{\xi}}$ and this concludes Step 1 since the sequence $(\nabla \widetilde{w_{\xi_n}})_{n \in \NN}$ has one possible limit.

\medskip

\underline{Step 2.} Suppose by contradiction that $\nabla \widetilde{w_{\xi_n}}$ does not converge to $\nabla \widetilde{w_{\xi}}$ in $L^p_{\text{unif}}(\R^d)$ when $n \longrightarrow +\infty$. Then there exists $\delta > 0$, a subsequence of $(\xi_n)_{n \in \NN}$ that we de not relabel and a sequence $(x_n)_{n \in \NN}$ such that 
\begin{equation}
\forall n \in \NN, \quad \| \nabla \widetilde{w_{\xi_n}} - \nabla \widetilde{w_{\xi}} \|_{L^p(B(x_n,1))} \geq \delta.
\label{eq:absurde}
\end{equation}
Up to another extraction, we can suppose that $x_n \underset{n \rightarrow +\infty}{\longrightarrow} x$ in $\mathbb{T}^d$, where $\mathbb{T}^d$ denotes the $d-$dimensional torus.
Since by Step 1, we know that $\nabla \widetilde{w_{\xi_n}} \underset{n \rightarrow +\infty}{\longrightarrow} \nabla \widetilde{w_{\xi}}$ in $L^p_{\text{loc}}(\R^d)$, we necessarily have that, up to extracting a subsequence, $|x_n| \underset{n \rightarrow +\infty}{\longrightarrow} +\infty$ in $\RR^d$. We introduce the shifted functions
\begin{equation}
\widehat{w_{n}^1} := \widetilde{w_{\xi_n}}(\cdot + x_n) \quad \text{and} \quad \widehat{w_n^2} :=  \widetilde{w_{\xi}}(\cdot + x_n),
\label{eq:shifted}
\end{equation}
\begin{equation}
\widehat{\phi_{n}^1} := \xi_n \cdot x + w_{\xi_n}^{\per}(\cdot + x_n) \quad \text{and} \quad \widehat{\phi_n^2} :=  \xi \cdot x + w_{\xi}^{\per}(\cdot + x_n).
\label{eq:shifted2}
\end{equation}
In particular, \eqref{eq:absurde} gives
\begin{equation}
\forall n \in \NN, \quad \| \nabla \widehat{w_n^1} - \nabla\widehat{w_n^2} \|_{L^p(B(0,1))} \geq \delta.
\label{eq:absurde2}
\end{equation}
We show in the sequel that, up to a subsequence, for $i = 1,2$,
\begin{equation}
\nabla \widehat{w_n^i} \underset{n \rightarrow + \infty}{\longrightarrow} \nabla \widehat{v^i} \ \text{in} \ L^p_{\text{loc}}(\R^d)  \ \quad \text{and} \quad  \nabla \widehat{v^i} = 0 \ \text{a.e}. 
\label{eq:step3}
\end{equation} 
In particular, passing to the limit $n \longrightarrow +\infty$ in~\eqref{eq:absurde2} will lead to a contradiction.

\medskip

\underline{Step 3.} Proof of~\eqref{eq:step3}. We prove \eqref{eq:step3} for $i=1$, the proof is standard for $i=2$. We have that $\nabla\widehat{w_n^1}$ solves in the distribution sense the PDE
\begin{equation}
-\text{div} a(\cdot + x_n)\big[ (\nabla \widehat{\phi_n^1} + \nabla \widehat{w_n^1})|\nabla \widehat{\phi_n^1} + \nabla \widehat{w_n^1}|^{p-2} - \nabla \widehat{\phi_n^1}|\nabla \widehat{\phi_n^1}|^{p-2} \big] = \text{div} \left(\widetilde{a}(\cdot + x_n)\nabla \widehat{\phi_n^1}|\nabla \widehat{\phi_n^1}|^{p-2}\right)
\end{equation}
and that
$\widehat{w_n^1} \in W_{\nabla \widehat{\phi_n^1}}.$ Since $$\|\widehat{w_n^1}\|_{W_{\nabla \widehat{\phi_n^1}}} = \|\widetilde{w_{\xi_n}} \|_{W_{\xi_n + \nabla w_{\xi_n}^{\per}}},$$
we get because of~\eqref{eq:bound} that the sequences $\big(\|\nabla \widehat{w_{n}^1} \|_{L^p(\R^d)} \big)_{n \in \NN}$ and $\big( \| |\nabla\widehat{\phi_n^1} |^{\frac{p-2}{2}} \nabla \widehat{w_n^1} \|_{L^2(\R^d)} \big)_{n \in \NN}$ are uniformly bounded in $n$. Thus, up to extracting a subsequence,
\begin{equation}
\nabla \widehat{w_{n}^1} \underset{n \rightarrow +\infty}{\relbar\joinrel\rightharpoonup} \nabla \widehat{v^1} \ \text{in} \ L^p(\R^d) \quad \text{and} \quad \nabla\widehat{w_{n}^1} |\nabla \widehat{\phi_n^1}|^{\frac{p-2}{2}} \underset{n \rightarrow + \infty}{\relbar\joinrel\rightharpoonup} \widehat{w^1} \ \text{in} \ L^2(\R^d).
\label{eq:step2}
\end{equation}
We may apply Lemma~\ref{lem:passaglimite} with 
$$\begin{cases}
\begin{aligned}
a_n& = a(\cdot + x_n), \quad \nabla\phi_n = \nabla\widehat{\phi_n^1}, \\
\nabla v_n & = \nabla \widehat{w_n^1} \quad \text{and} \quad h_n = \widetilde{a}(\cdot + x_n)\nabla \widehat{\phi_n^1}|\nabla \widehat{\phi_n^1}|^{p-2}.
\end{aligned}
\end{cases}$$
We check the required convergences.
\begin{enumerate}[label=(\roman*)]
\item We have that $\nabla\widehat{\phi_n^1}$ converges in $L^{\infty}(\R^d)$ to $\nabla\widehat{\phi^1} : x \longmapsto \xi \cdot x + \nabla w_{\xi}^{\per}(\cdot + x)$. Indeed, by periodicity, it is enough to check that $\nabla \widehat{\phi_n^1} \underset{n \rightarrow +\infty}{\longrightarrow} \nabla \widehat{\phi^1}$ in $Q$. For all $y \in Q$,
$$
\begin{aligned}
|\nabla \widehat{\phi_n^1}(y) - \nabla \widehat{\phi^1}(y)|& \leq |\xi_n - \xi| + | \nabla w_{\xi_n}^{\per}(x_n + y) - \nabla w_{\xi_n}^{\per}(x + y)| + |\nabla w_{\xi_n}^{\per}(x + y) - \nabla w_{\xi}^{\per}(x + y)| \\
& \leq |\xi_n - \xi| + C|\xi_n||x_n - x|_{\mathbb{T}}^{\alpha} + \left\{|\xi_n|^{1-\gamma} + |\xi|^{1-\gamma} \right\}|\xi_n - \xi|^{\gamma} 
\end{aligned}$$
 where we used~Proposition~\ref{prop:periodique} (ii) together with~Proposition~\ref{prop:periodique} (iv) in the last inequality and $|\cdot|_{\mathbb{T}}$ denotes the euclidian norm on $\mathbb{T}^d$. This proves the result by convergence of the sequences $(\xi_n)_{n \in \NN}$ and $(x_n)_{n \in \NN}$.
\item We have $a_n = a^{\per}(\cdot + x_n) + \widetilde{a}(\cdot + x_n)$. Since $x_n \underset{n \rightarrow +\infty}{\longrightarrow} x$ in $\mathbb{T}^d$, we have by Assumption~\textbf{(A2)} that $a^{\per}(\cdot + x_n) \underset{n \rightarrow +\infty}{\longrightarrow} a^{\per}(\cdot + x)$ in $L^{\infty}(Q)$. Let $B$ be a bounded domain, then since $\widetilde{a} \in \mathcal{C}^{0,\alpha} \cap L^{p'}(\R^d)$, we have $\widetilde{a} \underset{|x| \rightarrow + \infty}{\longrightarrow} 0$. Thus $\widetilde{a}(\cdot + x_n) \underset{n \rightarrow +\infty}{\longrightarrow} 0$ in $L^{\infty}(B)$ and finally $a_n$ converges locally uniformly to $\widehat{a} := a^{\per}(\cdot + x)$ when $n \rightarrow + \infty$.
\item This is \eqref{eq:step2}.
\item By the same argument as in (ii), we have that $h_n \underset{n \rightarrow +\infty}{\longrightarrow} 0$ in $L^{p'}_{\text{loc}}(\R^d)$.
\end{enumerate}
We have proved that, up to exacting a subsequence, $\nabla \widehat{w_n^1} \underset{n \rightarrow +\infty}{\longrightarrow} \nabla \widehat{v^1}$ in $L^p_{\text{loc}}(\R^d)$ where $ \widehat{v^1} \in W_{\xi + \nabla w_{\xi}^{\per}(\cdot + x)}$ solves in the distribution sense the PDE
$$\begin{aligned}
- \text{div}\ a^{\per}(\cdot + x) \bigg[\big(\xi + \nabla w_{\xi}^{\per}(\cdot + x) + \nabla \widehat{v^1}\big)&\big|\xi + \nabla w_{\xi}^{\per}(\cdot + x) + \nabla \widehat{v^1}\big|^{p-2} \\ & - \big( \xi + \nabla w_{\xi}^{\per}(\cdot + x) \big) \big|\xi + \nabla w_{\xi}^{\per}(\cdot + x)\big|^{p-2}  \bigg]=0.
\end{aligned}$$
Introducing $\widehat{V^1} := \widehat{v^1}(\cdot - x)$, we get that $\widehat{V^1} \in W_{\xi+\nabla w_{\xi}^{\per}}$ and that $\nabla\widehat{V^1}$ solves in the distribution sense the PDE
\begin{equation}
-\text{div} a^{\per}\big[(\xi + \nabla w_{\xi}^{\per} + \nabla \widehat{V^1})|\xi + \nabla w_{\xi}^{\per} + \nabla \widehat{V^1}|^{p-2} - (\xi + \nabla w_{\xi}^{\per})|\xi + \nabla w_{\xi}^{\per}|^{p-2} \big] = 0.
\label{eq:lem7.1_2}
\end{equation}
 Applying Lemma~\ref{lem:formfaible} to~\eqref{eq:lem7.1_2} with $w := \widehat{V^1}$ gives now
$$\int_{\R^d} a^{\per}(\cdot + x) \left[(\xi + \nabla w_{\xi}^{\per} + \nabla \widehat{V^1})|\xi + \nabla w_{\xi}^{\per}  + \nabla \widehat{V^1}|^{p-2} - (\xi + \nabla w_{\xi}^{\per}) |\xi + \nabla w_{\xi}^{\per} |^{p-2}  \right] \cdot \nabla \widehat{V^1} = 0.$$
Applying~\eqref{eq:useful_ineq_1} allows to conclude that $\nabla \widehat{V^1} = 0$ in $\R^d$ and thus $\nabla \widehat{v^1} = 0$. This proves~\eqref{eq:step3} and concludes the proof of~Theorem~\ref{th:cont}.

\begin{remarque} From the above theorem, we can deduce that $$\begin{cases}
\begin{aligned}
\R^d &\longrightarrow L^{\infty}(\R^d)\\
\xi &\longmapsto \nabla \widetilde{w_{\xi}} \end{aligned} \end{cases}$$ is continuous. 

\medskip

 The continuity of $\xi \mapsto \nabla w_{\xi}^{\per}$ is due to Proposition~\ref{prop:periodique} (iv). We prove that $\xi \mapsto \nabla \widetilde{w_{\xi}}$ is continuous for the $L^{\infty}(\R^d)$ topology. By contradiction, suppose that there exists $\xi \in \R^d$, two sequences $(\xi_n)_{n \in \NN} \subset \R^d$ and $(x_n)_{n \in \NN} \subset \R^d$ and a $\delta > 0$ such that $\xi_n \underset{n \rightarrow +\infty}{\longrightarrow} \xi$ and
$$\forall n \in \NN, \quad \big| \nabla \widetilde{w_{\xi_n}}(x_n) - \nabla \widetilde{w_{\xi}}(x_n)\big| \geq \delta.$$
By~Theorem~\ref{th:th_nonlin} (ii), there exists $\eta$ independent of $n$ such that 
$$\forall n \in \NN, \quad \forall y \in B(x_n,\eta), \quad \big| \nabla \widetilde{w_{\xi_n}}(y) - \nabla \widetilde{w_{\xi}}(y)\big| \geq \frac{\delta}{2}.$$
Thus, for all $n \in \NN$, $$\|\nabla \widetilde{w_{\xi_n}} - \nabla \widetilde{w_{\xi}} \|_{L^p_{\text{unif}}(\R^d)} \geq |B(0,1)|^{1/p}\delta\eta^{d/p}>0.$$
which is a contradiction with Theorem~\ref{th:cont}.
\end{remarque}

\section*{Acknowledgments}

The author warmly thanks his PhD advisor Xavier Blanc for fruitful discussions and for reading many versions of this manuscript.

\appendix 

\section{Proof of Proposition~\ref{prop:periodique}}
\label{annexe:proof_prop2.1}

\paragraph*{Proof of \textit{(i)}.} This point is obvious by the form of~\eqref{eq:cornonlin} and the uniqueness of $w_{\xi}^{\per}$ in $W^{1,p}_{\per}(Q)/\R$. 

\paragraph*{Proof of \textit{(ii)}.} The first estimate follows for example from~\eqref{eq:min_per} and in particular:
$$\frac{1}{p} \int_Q a^{\per}(y)|\xi + \nabla w_{\xi}^{\per}|^p \leq \frac{1}{p} \int_Q a^{\per}|\xi|^p,$$
together with \textbf{(A1)}. The proof of the second estimate is exactly the same as the one of~Theorem~\ref{th:th_nonlin} (ii) (see Subsection~\ref{subsect:th}) with $a$ replaced by $a^{\per}$ and $w_{\xi}$ replaced by $w_{\xi}^{\per}$.

\paragraph*{Proof of \textit{(iii)}.} Let $\xi_1, \xi_2 \in \R^d$. Applying~\eqref{eq:cor_per} with $\phi = w_{\xi_1}^{\per} - w_{\xi_2}^{\per}$ with $\xi = \xi_i, i=1,2$ and making the difference between the two expressions gives:
\begin{equation}
\int_{Q} a^{\per}\big\{ (\xi_1 + \nabla w_{\xi_1}^{\per})|\xi_1 + \nabla w_{\xi_1}^{\per}|^{p-2}  - (\xi_2 + \nabla w_{\xi_2}^{\per})|\xi_2 + \nabla w_{\xi_2}^{\per}|^{p-2} \big\}\cdot \big\{\nabla w_{\xi_1}^{\per} - \nabla w_{\xi_2}^{\per} \big\} = 0.
\label{eq:prop2.2_(iii)} 
\end{equation} 
Thus, adding the term
\begin{equation}
\int_{Q} a^{\per}\big\{ (\xi_1 + \nabla w_{\xi_1}^{\per})|\xi_1 + \nabla w_{\xi_1}^{\per}|^{p-2}  - (\xi_2 + \nabla w_{\xi_2}^{\per})|\xi_2 + \nabla w_{\xi_2}^{\per}|^{p-2} \big\}\cdot \big\{\xi_1 - \xi_2 \big\} = 0
\label{eq:prop2.2_(iii)_2} 
\end{equation}
in the left and right-hand side of~\eqref{eq:prop2.2_(iii)}, applying~\eqref{eq:useful_ineq_1} on the left-hand side and~\eqref{eq:useful_ineq_3} on the right-hand side provides
$$
\begin{aligned}
c \int_Q \big|\xi_1 + \nabla w_{\xi_1}^{\per} & - (\xi_2 + \nabla w_{\xi_2}^{\per})\big|^p \\ &\leq C \int_Q \big(\big|\xi_1 + \nabla w_{\xi_1}^{\per}\big|^{p-2} + \big|\xi_2 + \nabla w_{\xi_2}^{\per}\big|^{p-2} \big) \big|\xi_1 + \nabla w_{\xi_1}^{\per} - (\xi_2 + \nabla w_{\xi_2}^{\per})\big| \big|\xi_1 - \xi_2 \big|.
\end{aligned}
$$
We apply the H\"{o}lder inequality on the right-hand side with exponents $p/(p-2)$, $p$ and $p$ find, using~\eqref{eq:estim_holder_per}:
$$c \left(\int_Q \big|\xi_1 + \nabla w_{\xi_1}^{\per} - (\xi_2 + \nabla w_{\xi_2}^{\per})\big|^p \right)^{1 - 1/p} \leq C \left[ |\xi_1|^{p-2} + |\xi_1|^{p-2} \right]|\xi_1-\xi_2|.$$
This yields~\eqref{eq:estim_holder_per} by taking the $1/(p-1)-$th power of the above inequality.

\paragraph*{Proof of \textit{(iv)}.} We argue by contradiction. Suppose that there exist three sequences $(x_n)_{n \in \NN} \subset Q$, $(\xi_n)_{n \in \NN} \subset \RR^d$ and $(\eta_n)_{n \in \NN} \subset \RR^d$ such that for all $n \in \NN$, $|\xi_n| = 1$, $0 < |\xi_n - \eta_n| \leq \frac{1}{2}$ and
$$\left| \nabla w_{\xi_n}^{\per}(x_n) - \nabla w_{\eta_n}^{\per}(x_n) \right| \geq n |\xi_n - \eta_n |^{\gamma}.$$
By~Proposition~\ref{prop:periodique} (ii), we have for $n$ large enough that
\begin{equation}
\forall y \in B(x_n,\delta_n), \quad \big| \nabla w_{\xi_n}^{\per}(y) - \nabla w_{\eta_n}^{\per}(y) \big| \geq \frac{n}{2} |\xi_n - \eta_n |^{\gamma},
\label{eq:rk45_2}
\end{equation}
where
$\delta_n := |\xi_n - \eta_n|^{\gamma/\alpha}$. Integrating \eqref{eq:rk45_2} over $B(x_n,\delta_n)$, we get that 
\begin{equation}
 \big\| \nabla w_{\xi_n}^{\per} - \nabla w_{\eta_n}^{\per} \big\|_{L^p(Q)}^p \geq |B_1| \left(\frac{n}{2}\right)^p |\xi_n - \eta_n |^{p\gamma} \delta_n^d = Cn^p|\xi_n - \xi_n|^{\gamma (p + d/\alpha)} = C n^p |\xi_n - \eta_n |^{\beta p}.
\label{eq:rk45_3}
\end{equation}
However, by~\eqref{eq:estim_holder_per},
$\big\| \nabla w_{\xi_n}^{\per} - \nabla w_{\eta_n}^{\per} \big\|_{L^p(Q)}^p \leq C|\xi_n - \eta_n|^{\beta p}.$
This is a contradiction with \eqref{eq:rk45_3} when taking $n \longrightarrow +\infty$. We have proved~\eqref{eq:continuité_Linfty} for $|\xi| = 1$ and $|\xi - \eta| \leq \frac{1}{2}$. The other cases are treated by homogeneity and with the help of \eqref{eq:estim_holder_per} as in~\eqref{eq:homogeneite_3}. 

\section{Some technical inequalities}
\label{sect:ineq}

We gather in this subsection some useful inequalities. We first have
\begin{equation}
(x|x|^{p-2} - y|y|^{p-2})\cdot(x-y) \geq c|x-y|^p,
\label{eq:useful_ineq_1}
\end{equation}
\begin{equation}
(x|x|^{p-2} - y|y|^{p-2})\cdot(x-y) \geq c\left[|x|^{p-2} + |y|^{p-2} \right]|x-y|^2, 
\label{eq:useful_ineq_2}
\end{equation}
\begin{equation}
\left|x|x|^{p-2} - y|y|^{p-2} \right| \leq C\left[|x|^{p-2} + |y|^{p-2} \right]|x-y|.
\label{eq:useful_ineq_3}
\end{equation}
In the above inequalities~\eqref{eq:useful_ineq_1}-\eqref{eq:useful_ineq_3}, $c$ and $C$ refer to universal constants that only depend on $p$. For a proof of these inequalities, we refer to \cite{iwaniec1983projections}. For $\xi, x \in \R^d$, we introduce the function
\begin{equation}
g_{\xi}(x) := |\xi + x|^p - |\xi|^p - p \xi |\xi|^{p-2} \cdot x.
\label{eq:g_(xi)}
\end{equation}
We have the following lemma:
\begin{lemme} There exist two constants $c, C > 0$ depending only on $p$ such that
\begin{equation}
\forall \xi, x \in \R^d, \quad c \left[ |x|^2|\xi|^{p-2} + |x|^p \right] \leq g_{\xi}(x) \leq C \left[ |x|^2|\xi|^{p-2} + |x|^p \right].
\label{eq:useful_ineq_4}
\end{equation}
\label{lem:B.1}
\end{lemme}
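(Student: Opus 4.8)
The plan is to prove the two–sided bound by first removing the scaling and the dimensional degrees of freedom, and then splitting according to the size of $|\xi|$. The case $p=2$ is trivial, since then $g_\xi(x)=|x|^2$ while the right-hand side equals $2|x|^2$; so assume $p>2$, in which case $F:=|\cdot|^p$ is of class $\mathcal{C}^2(\R^d)$ with $D^2F(z)=p|z|^{p-2}\mathrm{I}+p(p-2)|z|^{p-4}z\otimes z$, whence for $p\geq 2$
$$p|z|^{p-2}|h|^2 \;\leq\; D^2F(z)(h,h)\;\leq\; p(p-1)|z|^{p-2}|h|^2,\qquad z,h\in\R^d.$$
Next, using the homogeneity $g_{\lambda\xi}(\lambda x)=\lambda^p g_\xi(x)$ for $\lambda>0$ — and the fact that both sides of the claimed inequality scale by the same factor $\lambda^p$, the case $x=0$ being trivial — it suffices to treat $|x|=1$; write $e:=x$. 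Finally, since $|\xi+e|^2=|\xi|^2+2\,\xi\cdot e+1$, the quantity $g_\xi(e)$ depends on $\xi$ only through the pair $(s,\sigma):=(|\xi|,\xi\cdot e)$, namely $g_\xi(e)=G(s,\sigma)$ where $G(s,\sigma):=(s^2+2\sigma+1)^{p/2}-s^p-p s^{p-2}\sigma$; this removes the dependence on $d$, and $(s,\sigma)$ ranges over $K_0:=\{(s,\sigma):s\geq 0,\ |\sigma|\leq s\}$. It then remains to show $c(p)(s^{p-2}+1)\leq G(s,\sigma)\leq C(p)(s^{p-2}+1)$ on $K_0$.

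For $s\geq 2$ I would use the integral Taylor identity $g_\xi(e)=\int_0^1(1-t)D^2F(\xi+te)(e,e)\,dt$ together with the Hessian bounds above; since $\tfrac{s}{2}\leq s-1\leq|\xi+te|\leq s+1\leq 2s$ for $t\in[0,1]$ and $p-2\geq 0$, this yields $\tfrac{p}{2}(s/2)^{p-2}\leq g_\xi(e)\leq\tfrac{p(p-1)}{2}(2s)^{p-2}$, and the remark $s^{p-2}\leq s^{p-2}+1\leq 2s^{p-2}$ (valid for $s\geq 2$, $p\geq 2$) gives the claim with constants depending only on $p$. For $0\leq s\leq 2$ I would argue by compactness: $K:=K_0\cap\{s\leq 2\}$ is compact, $G$ is continuous on $K$, and $G>0$ on $K$ because $g_\xi(e)=F(\xi+e)-F(\xi)-\nabla F(\xi)\cdot e>0$ by strict convexity of $z\mapsto|z|^p$ (as $e\neq 0$); since moreover $s^{p-2}+1\in[1,\,2^{p-2}+1]$ on $K$, one obtains $\frac{\min_K G}{2^{p-2}+1}(s^{p-2}+1)\leq G(s,\sigma)\leq(\max_K G)(s^{p-2}+1)$, again with constants depending only on $p$. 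Combining the two ranges of $s$ and unravelling the homogeneity reduction finishes the proof.

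The argument is essentially routine; the only points requiring a little care are: (i) checking that the constants do not secretly depend on the dimension $d$, which is precisely why $g_\xi(e)$ is rewritten as the two-variable function $G(s,\sigma)$; and (ii) justifying the strict positivity of $G$ on the compact set $K$, which follows from the strict convexity of $z\mapsto|z|^p$ for $p>1$ (exactness of the first-order Taylor expansion would force $e=0$). I do not expect any genuine obstacle beyond these bookkeeping matters.
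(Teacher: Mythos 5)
Your argument is correct and complete. The paper itself omits the proof of Lemma~\ref{lem:B.1} entirely (it is declared elementary), so there is nothing to compare against; your write-up supplies exactly the missing details. The route you take is the standard one: the integral Taylor identity $g_{\xi}(x)=\int_0^1(1-t)\,D^2F(\xi+tx)(x,x)\,\mathrm{d}t$ with $F=|\cdot|^p$ and the two-sided Hessian bound $p|z|^{p-2}|h|^2\leq D^2F(z)(h,h)\leq p(p-1)|z|^{p-2}|h|^2$, combined with the $p$-homogeneity reduction to $|x|=1$. Your splitting into $|\xi|\geq 2$ (where the segment $[\xi,\xi+e]$ stays away from the origin and the Taylor bound is directly comparable to $|\xi|^{p-2}+1$) and $|\xi|\leq 2$ (handled by compactness and strict convexity, after reducing to the two scalar variables $(|\xi|,\xi\cdot e)$ to keep the constants dimension-free) is sound; the only non-constructive ingredient is the compactness step, which one could replace by the elementary estimate $\int_0^1(1-t)|\xi+tx|^{p-2}\,\mathrm{d}t\geq c(p)\bigl(|\xi|^{p-2}+|x|^{p-2}\bigr)$ if explicit constants were wanted, but that is a matter of taste, not of correctness.
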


\begin{proof} This proof is elementary and will be omitted here.
\end{proof}

\begin{lemme} There exists a constant $C > 0$ such that for all $x, h \in \R$, 
\begin{equation}
\left| (x + h)^{1/(p-1)} - x^{1/(p-1)} \right| \leq C |h|^{1/(p-1)}.
\label{eq:1D_estim}
\end{equation}
\label{lem:1D_estim}
\end{lemme}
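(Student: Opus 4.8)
The plan is to prove Lemma~\ref{lem:1D_estim}, i.e. that the function $x \longmapsto x^{1/(p-1)}$ (with the convention $z^{1/(p-1)} := \mathrm{sgn}(z)|z|^{1/(p-1)}$) is $\tfrac{1}{p-1}$-H\"older continuous on $\R$. Write $q := \tfrac{1}{p-1} \in (0,1]$ since $p \geq 2$. If $q = 1$ (the case $p = 2$) the inequality is trivial with $C = 1$, so assume $0 < q < 1$. The statement is scaling invariant: replacing $(x,h)$ by $(\lambda x, \lambda h)$ for $\lambda > 0$ multiplies both sides by $\lambda^q$, so it suffices to treat the case $|h| = 1$, or alternatively to reduce directly to the inequality $\big||y|^q \mathrm{sgn}(y) - |z|^q \mathrm{sgn}(z)\big| \leq C|y - z|^q$ for all $y, z \in \R$, which is what I would actually establish.

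First I would reduce to $y, z \geq 0$. If $y$ and $z$ have the same sign, one may assume by the odd symmetry of $t \mapsto t^q$ that both are $\geq 0$. If they have opposite signs, say $y \geq 0 \geq z$, then $|y - z| = y + |z| = |y| + |z| \geq \max(|y|,|z|)$, and the left-hand side is $|y|^q + |z|^q \leq 2\max(|y|^q,|z|^q) = 2\max(|y|,|z|)^q \leq 2|y-z|^q$; so the mixed-sign case is handled with constant $2$. Thus the crux is the classical subadditivity-type estimate: for all $a, b \geq 0$,
\begin{equation}
\left| a^q - b^q \right| \leq |a - b|^q, \qquad 0 < q \leq 1.
\label{eq:planholder}
\end{equation}
For \eqref{eq:planholder}, assume WLOG $a \geq b \geq 0$; set $b = ta$ with $t \in [0,1]$ (if $a = 0$ it is trivial). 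Then the claim becomes $1 - t^q \leq (1-t)^q$, equivalently $t^q + (1-t)^q \geq 1$ for $t \in [0,1]$. This follows because for $0 < q \leq 1$ and $t \in [0,1]$ we have $t^q \geq t$ and $(1-t)^q \geq 1 - t$, so $t^q + (1-t)^q \geq t + (1-t) = 1$. Equivalently one can invoke concavity of $s \mapsto s^q$ together with $s^q\big|_{s=0} = 0$ to get superadditivity $(u+v)^q \leq u^q + v^q$ and apply it to $u = a - b$, $v = b$.

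Combining the same-sign reduction (constant $1$) with the opposite-sign case (constant $2$) gives the result with $C = 2$; pulling the scaling back out recovers $\big|(x+h)^{1/(p-1)} - x^{1/(p-1)}\big| \leq 2|h|^{1/(p-1)}$. There is no real obstacle here — the only point requiring a line of care is keeping track of the sign convention $z^{1/(p-1)} = \mathrm{sgn}(z)|z|^{1/(p-1)}$ so that the mixed-sign case is correctly reduced, since that is where the constant genuinely exceeds $1$; everything else is the elementary inequality \eqref{eq:planholder} for the power $q = 1/(p-1) \leq 1$.
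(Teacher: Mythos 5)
Your proof is correct. The paper itself omits the argument as elementary, and yours is the standard one: the reduction to the subadditivity inequality $|a^q-b^q|\le|a-b|^q$ for $a,b\ge 0$ and $q=\tfrac{1}{p-1}\in(0,1]$, plus the careful handling of the sign convention $z^{1/(p-1)}=\mathrm{sgn}(z)|z|^{1/(p-1)}$ in the mixed-sign case (which is indeed where the constant $C=2$ rather than $1$ comes from), is exactly what is needed.
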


\begin{proof} This proof is elementary and will be omitted here.
\end{proof}

\begin{lemme}
Assume that Hypothesis~\textbf{(A4)'} is satisfied. Then $\mathcal{C}_0^{\infty}(\RR^d)$ is dense in $W_{\xi + \nabla w_{\xi}^{\per}}$.
\label{lem:dense}
\end{lemme}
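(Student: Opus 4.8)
\textbf{Plan for the proof of Lemma~\ref{lem:dense}.}

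The statement to prove is that, under Assumption~\textbf{(A4)'}, the space $\mathcal{C}_0^{\infty}(\RR^d)$ is dense in $W_{\xi + \nabla w_{\xi}^{\per}}$. Write $u := \xi + \nabla w_{\xi}^{\per}$ throughout and recall that $\| \cdot \|_{W_u}$ is the norm $\|\nabla v\|_{L^p(\R^d)} + \||u|^{(p-2)/2}\nabla v\|_{L^2(\R^d)}$. The plan is to approximate a given $v \in W_u$ in two successive steps: first by a function with bounded support (a cut-off argument), then by a smooth function (mollification), the second step being essentially standard once the support is compact. So the content is entirely in the cut-off step, and this is exactly where Assumption~\textbf{(A4)'} enters.

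\textbf{Step 1 (truncation in space).} Given $v \in W_u$, I would use a dyadic cut-off function of the type used in the proof of Lemma~\ref{lem:formfaible}: fix $\chi \in \mathcal{C}_0^\infty(\R^d)$ with $\chi \equiv 1$ on $Q_{1/2}$, $\operatorname{supp}\chi \subset Q_1$, $0 \le \chi \le 1$, and set, for $R > r_{\min}$ large,
\[
v_R := \Big( v - \fint_{Q_R \setminus Q_{R/2}} v \Big)\,\chi\!\left(\frac{\cdot}{R}\right).
\]
Then $v_R \in W_u$ has support in $Q_R$, and (since constants are killed by the gradient) $v_R$ and $v$ represent the same equivalence class up to the constant correction only on the annulus. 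The key estimate is
\[
\|\nabla(v - v_R)\|_{L^p}^p + \big\||u|^{\frac{p-2}{2}}\nabla(v-v_R)\big\|_{L^2}^2 \le C\Big( \|\nabla v\|_{L^p(Q_R^c \cup (Q_R\setminus Q_{R/2}))}^p + \cdots \Big),
\]
the dangerous term being the one coming from $\nabla\chi(\cdot/R)$ hitting the factor $v - \fint_{Q_R \setminus Q_{R/2}} v$: one gets a contribution $\tfrac{C}{R}\| (v - \fint_{Q_R\setminus Q_{R/2}} v)\|_{L^p(Q_R\setminus Q_{R/2})}$ in the $L^p$ part and $\tfrac{C}{R}\||u|^{(p-2)/2}(v - \fint_{Q_R\setminus Q_{R/2}} v)\|_{L^2(Q_R\setminus Q_{R/2})}$ in the weighted $L^2$ part. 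The unweighted term is controlled by the ordinary (rescaled) Poincaré–Wirtinger inequality on $Q_R \setminus Q_{R/2}$, namely $\tfrac1R\|v - \fint v\|_{L^p(Q_R\setminus Q_{R/2})} \le C\|\nabla v\|_{L^p(Q_R\setminus Q_{R/2})}$, exactly as in~\eqref{eq:poinc}. The weighted term is precisely where \textbf{(A4)'} is used: its rescaled form gives $\tfrac1R\||u|^{(p-2)/2}(v - \fint v)\|_{L^2(Q_R\setminus Q_{R/2})} \le C\||u|^{(p-2)/2}\nabla v\|_{L^2(Q_R\setminus Q_{R/2})}$. Since $v \in W_u$, both $\|\nabla v\|_{L^p(Q_R\setminus Q_{R/2})}$ and $\||u|^{(p-2)/2}\nabla v\|_{L^2(Q_R\setminus Q_{R/2})}$ tend to $0$ as $R \to \infty$ by dominated convergence (the integrands are fixed $L^1$ functions and the domains shrink to the empty set in measure at infinity). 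Hence $\|v_R - v\|_{W_u} \to 0$.

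\textbf{Step 2 (mollification).} Now $v_R$ has compact support and satisfies $\nabla v_R \in L^p(\R^d)$, $|u|^{(p-2)/2}\nabla v_R \in L^2(\R^d)$. Convolve with a standard mollifier $\rho_\delta$: $v_{R,\delta} := v_R * \rho_\delta \in \mathcal{C}_0^\infty(\R^d)$ for $\delta$ small (support still bounded). One has $\nabla v_{R,\delta} \to \nabla v_R$ in $L^p(\R^d)$ by classical mollification theory. For the weighted term, since $|u|^{p-2}$ is bounded above (Proposition~\ref{prop:periodique}~(ii) gives $|u| = |\xi + \nabla w_\xi^{\per}| \le C$ on $Q$, hence everywhere by periodicity) and the supports stay in a fixed compact set, $\||u|^{(p-2)/2}(\nabla v_{R,\delta} - \nabla v_R)\|_{L^2} \le C\|\nabla v_{R,\delta} - \nabla v_R\|_{L^2(K)} \to 0$ using that $\nabla v_R \in L^p \subset L^2_{\mathrm{loc}}$ on the fixed compact $K$. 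A diagonal argument in $(R,\delta)$ then produces a sequence in $\mathcal{C}_0^\infty(\R^d)$ converging to $v$ in $W_u$, which is the claim.

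\textbf{Main obstacle.} The only non-routine ingredient is the weighted Poincaré–Wirtinger inequality on the annulus $Q_R \setminus Q_{R/2}$ with weight $|u(\cdot)|^{p-2}$, which can degenerate where $u$ vanishes; this is exactly the hypothesis \textbf{(A4)'} (and its scaling invariance, which needs $R > r_{\min}$). Everything else is bookkeeping. One should be slightly careful that the rescaled form of \eqref{eq:A4_prime} is applied on annuli translated/dilated consistently with the periodicity of $u = \xi + \nabla w_\xi^{\per}$ — but since $u$ is periodic, the weight evaluated on $Q_R \setminus Q_{R/2}$ is $|u(R\cdot)|^{p-2}$ up to the periodic identification, matching the statement of \textbf{(A4)'} exactly. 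This is the point flagged in~\cite{zhikov1998weighted} and the remark following Theorem~\ref{th:cont}.
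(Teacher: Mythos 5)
Your proposal is correct and follows essentially the same route as the paper's proof: the same cut-off $\bigl(v - \fint_{Q_R\setminus Q_{R/2}} v\bigr)\chi(\cdot/R)$, with the gradient of the cut-off controlled by the rescaled standard $L^p$ Poincar\'e--Wirtinger inequality and the rescaled weighted $L^2$ inequality of \textbf{(A4)'} on the annulus, followed by smooth approximation of the compactly supported truncation (the paper uses density of $\mathcal{C}_0^{\infty}$ in $W^{1,p}_0(Q_R)$ plus H\"older to pass to the weighted $L^2$ norm, which is equivalent to your mollification step since the weight is bounded above). No gaps.
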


\begin{proof}[Proof of Lemma~\ref{lem:dense}]
Let $v \in W_{\xi + \nabla w_{\xi}^{\mathrm{per}}}$ and $\varepsilon > 0$. There exists $R = R(\varepsilon) >1$ such that 
\begin{equation}
\left\||\xi + \nabla w_{\xi}^{\per}|^{\frac{p-2}{2}}\nabla v \right\|_{L^2(Q_{R/2}^c)} + \left\|\nabla v \right\|_{L^p(Q_{R/2}^c)} < \varepsilon.
\label{eq:lemdense_4}
\end{equation}
Let $\chi_R$ be a cut-off function such that $\chi_R = 1$ in $Q_{R/2}$ and $\chi = 0$ in $Q_{3R/4}^c$. We have that $|\chi_R | + R|\nabla \chi_R| \leq C$ where $C$ depends only on the dimension $d$ (and in partiuclar not on $R$). We introduce 
$$w_R := \left( v - \fint_{Q_{R}\setminus Q_{R/2} } v \right)\chi_R,$$
We have immediately that $w_R$ is compactly supported in $Q_R$ and that $w_R \in W^{1,p}_0(Q_R)$. Thus there exists a function $\Phi \in \mathcal{C}^{\infty}_{0}(Q_R)$ such that  
\begin{equation}
\left\|  w_R - \Phi \right\|_{W^{1,p}(Q_R)} \leq \varepsilon R^{- \frac{dp}{p-2}}  \quad \left(\leq \varepsilon \right).
\label{eq:lemdense_2}
\end{equation}
We extend $\Phi$ by zero outside $Q_R$.
By H\"{o}lder inequality, we have that \begin{equation}
\left\|  w_R - \Phi \right\|_{H^1_0(Q_R)} \leq \varepsilon.
\label{eq:lemdense_2bis}
\end{equation}
We next show that
\begin{equation}
\left\| v - \Phi \right\|_{W_{\xi}+\nabla w_{\xi}^{\per}} \leq C(\xi,d,p,a^{\per},C_{\text{poinc}})\varepsilon,
\label{eq:lemdense_ccl}
\end{equation}
where $C_{\text{poinc}}$ denotes the maxmimum between the $L^p$ Poincar\'e-Wirtinger constant on $Q\setminus Q_{1/2}$ and the weighted $L^2$ Poincar\'e-Wirtinger constant, given by Assumption \textbf{(A4)'}, on $Q\setminus Q_{1/2}$.
By the triangle inequality, we have that  
\begin{equation}
\left\| v - \Phi \right\|_{W_{\xi}+\nabla w_{\xi}^{\per}} \leq \left\| v - w_R \right\|_{W_{\xi}+\nabla w_{\xi}^{\per}} + \left\| w_R - \Phi \right\|_{W_{\xi}+\nabla w_{\xi}^{\per}}.
\label{eq:lemdense_1}
\end{equation}
We study separately each term of~\eqref{eq:lemdense_1}. By Proposition \ref{prop:periodique} (iv), \eqref{eq:lemdense_2} and~\eqref{eq:lemdense_2bis}, we have that 
$$\begin{aligned}
\left\| w_R - \Phi \right\|&_{W_{\xi}+\nabla w_{\xi}^{\per}} = \left\|\nabla w_R - \nabla\Phi \right\|_{L^p(Q_R)} + \left\| |\xi +\nabla w_{\xi}^{\per}|^{\frac{p-2}{2}}\left(\nabla w_R - \nabla \Phi\right) \right\|_{L^2(Q_R)}
\\ & \leq \left\|w_R - \Phi \right\|_{W^{1,p}(Q_R)} + C|\xi|^{\frac{p-2}{2}}\left\| w_R - \Phi \right\|_{H^1(Q_R)} \leq \left(C(d,p,a^{\per})|\xi|^{\frac{p-2}{2}} + 1\right)\varepsilon.
\end{aligned}$$
As for the first term of~\eqref{eq:lemdense_1}, we write that 
$$\begin{aligned}
\nabla v - \nabla w_R = \nabla v (1-\chi_R) + \frac{1}{R}\left( v - \fint_{Q_{R}\setminus Q_{R/2} } v \right)\nabla\chi(./R)
\end{aligned}$$
Thus, applying the $L^p$ Poincar\'e-Wirtinger inequality, we have that 
\begin{equation}
\left\|\nabla v - \nabla w_R \right\|_{L^p(\RR^d)} \leq  \left\|\nabla v \right\|_{L^p(Q_{R/2}^c)} + C_{\text{poinc}}  \left\|\nabla v \right\|_{L^p(Q_R \setminus Q_{R/2})} \underset{\eqref{eq:lemdense_4}}{\leq} (1+C_{\text{poinc}})\varepsilon.
\label{eq:lemdense_5}
\end{equation}
As for the $L^2 \left(|\xi + \nabla w_{\xi}^{\per}|^{\frac{p-2}{2}} \dint \lambda\right)$ norm, we use Assumption \textbf{(A4)'} to obtain that
\begin{equation}
\begin{aligned}
\left\|\nabla v - \nabla w_R \right\|_{L^2 \left(|\xi + \nabla w_{\xi}^{\per}|^{\frac{p-2}{2}} \dint \lambda\right)} &\leq  \left\||\xi + \nabla w_{\xi}^{\per}|^{\frac{p-2}{2}}\nabla v \right\|_{L^2(Q_{R/2}^c)} \\ & + C_{\text{poinc}}  \left\||\xi + \nabla w_{\xi}^{\per}|^{\frac{p-2}{2}}\nabla v \right\|_{L^2(Q_R \setminus Q_{R/2})} \leq \left(1+C_{\text{poinc}}\right)\varepsilon.
\end{aligned}
\label{eq:lemdense_6}
\end{equation}
Gathering together~\eqref{eq:lemdense_1},~\eqref{eq:lemdense_5} and~\eqref{eq:lemdense_6}, we get~\eqref{eq:lemdense_ccl} and conclude the proof of the Lemma.
\end{proof}

\bibliographystyle{plain}
\bibliography{nonlin}
\end{document}